\newcommand{\innerthmname}{}
\theoremstyle{definition}
\newtheorem{theorem}[equation]{Theorem}
\newtheorem{lemma}[equation]{Lemma}
\newtheorem{corollary}[equation]{Corollary}
\theoremstyle{definition}
\newtheorem{definition}[equation]{Definition}
\newcounter{condition}
\theoremstyle{remark}
\newtheorem{example}[equation]{Example}
\newtheorem{remark}[equation]{Remark}
\newtheorem{condition}[condition]{Condition}
\numberwithin{equation}{section}
\numberwithin{condition}{section}
\newcommand\reallywidehat[1]{%
	\savestack{\tmpbox}{\stretchto{%
			\scaleto{%
				\scalerel*[\widthof{\ensuremath{#1}}]{\kern-.6pt\bigwedge\kern-.6pt}%
				{\rule[-\textheight/2]{1ex}{\textheight}}
			}{\textheight}%
		}{0.5ex}}%
	\stackon[1pt]{#1}{\tmpbox}%
}
\keywords{Morita equivalence, rings with local units, partial skew group rings, partial actions}
\subjclass[2020]{Primary: 16D90. Secondary: 16S35, 20M18, 16S88.}
\title[Morita Equivalence of Subrings]{Morita Equivalence of Subrings with Applications to Inverse Semigroup Algebras}
\author[A. Zhang]{Allen Zhang}
\email{allenusca@gmail.com}
\begin{document}
	
	\begin{abstract}
		We develop a technique to show the Morita equivalence of certain subrings of a ring with local units. We then apply this technique to develop conditions that are sufficient to show the Morita equivalence of subalgebras induced by partial subactions on generalized Boolean algebras and, subsequently, strongly $E^{\ast}$-unitary inverse subsemigroups. As an application, we prove that the Leavitt path algebra of a graph is Morita equivalent to the Leavitt path algebra of certain subgraphs and use this to calculate the Morita equivalence class of some Leavitt path algebras. Finally, as the main application, we prove a desingularization result for labelled Leavitt path algebras.
	\end{abstract}
	\maketitle
	
	\section{Introduction}	
	Morita equivalence was first defined for rings by Morita in his seminal paper \cite{Morita1958DualityFM}. Since then, the theory of Morita equivalence has been extended to a variety of other objects. Rieffel defined a notion of strong Morita equivalence for $C^{\ast}$-algebras in \cite{RIEFFEL1974176}. In the study of $C^{\ast}$-algebras, it was found that groupoids and inverse semigroups were deeply intertwined in the theory of $C^{\ast}$-algebras, so theories of Morita equivalence were developed for inverse semigroups and groupoids and subsequently related back to the strong Morita equivalence of $C^{\ast}$-algebras \cite{ Lawson1996EnlargementsOR, Muhly87, SteinbergSemiMorita, Talwar_1995}.
	
	A different notion of Morita equivalence was defined for rings with local units by Abrams in \cite{Abrams01011983} and then further elaborated on by Ánh and Márki in \cite{AnhMoritaEquivalenceLocal}. This notion of Morita equivalence was then applied to Leavitt path algebras and proved to have important applications in the field of Leavitt path algebras. For a survey of the history of Leavitt path algebras and some applications of Morita equivalence in it, see the survey by Abrams \cite{AbramsSurvey}. Clark and Sims showed in \cite{CLARK20152062} that the theory of Morita equivalence of groupoids developed for $C^{\ast}$-algebras was also useful for the algebraic analog, known as Steinberg algebras.
	
	In this paper, we develop conditions that are sufficient to prove that certain subrings $S \subseteq R$ for rings with local units are Morita equivalent. Our result is a broad generalization of a technique used to prove desingularization results in \cite{abrams2008leavitt, Firrisa2020MoritaEO}. As a result of our larger generality, we find parallels to the notion of a full hereditary $C^{\ast}$-subalgebra $B \subseteq A$, which was proven by Rieffel in \cite{rieffel1982morita} to be sufficient to conclude that $B$ and $A$ are Morita equivalent as $C^{\ast}$-algebras.
	
	A key contribution of our paper will be to develop our results for objects which induce our $R$-algebras, which decreases the	computational intensity of the proofs in previous papers. As our main result, we will prove a desingularization result for labelled Leavitt path algebras, which were defined in \cite{Boava2021LeavittPA}. We will accomplish this by realizing our subalgebras from subobjects in the following order, and then proving conditions sufficient for Morita equivalence for each of these in succession.
	
			\tikzset{
		basic/.style={
			draw,
			rectangle,
			align=center,
			minimum width=8em,
			minimum height=3em,
			text centered
		},
		arrsty/.style={
			draw=black,
			-latex
		}
	}
	\begin{center}
		\begin{tikzpicture}[every node/.style={font=\sffamily}]
			\tikzset{every node}=[font=\tiny]
			
			\node[basic] (b) {Strongly $E^{\ast}$-Unitary \\ Inverse Semigroup};
			\node[basic, right=of b] (c) {Generalized \\ Boolean Algebra \\ Partial Action };
			\node[basic, right=of c] (d) {$R$-algebra};
			
			\draw[arrsty]   (b)    --  (c);
			\draw[arrsty]   (c)    --  (d);
			
		\end{tikzpicture}
	\end{center}

	The theory for relating the above objects was primarily developed by this paper's author in \cite{zhang2025partialactionsgeneralizedboolean} and this paper borrows much of the same notation.

	This idea of proving Morita equivalence through successive realizations is not new, as there are well-developed theories of Morita equivalence of various objects that induce each other \cite{CLARK20152062, Muhly87, Renault87, SteinbergSemiMorita}. However, and our hope is that the conditions in this paper are easier to check in practice and provide further insight into how these objects relate to each other on the level of combinatorial objects.
	
	The outline of the paper is as follows. In Section~\ref{moritaequivsect}, we extend a technique from \cite{abrams2008leavitt, Firrisa2020MoritaEO} for proving Morita equivalence for subrings with $\sigma$-units to subrings with arbitrary local units and derive some tractable conditions where the technique is guaranteed to work. In Section~\ref{booleanaction}, we extend these conditions to conditions on partial subactions on a generalized Boolean algebra. In Section~\ref{inversegroupaction}, we further extend these conditions to conditions on strongly $E^{\ast}$-unitary inverse subsemigroups. In Section~\ref{enlargement}, we will show how the notion of enlargement defined by Lawson in \cite{Lawson1996EnlargementsOR} is related to our conditions. Finally, in Section~\ref{Leavittapp} and \ref{AppLabelled}, we apply our results to prove Morita equivalence for various combinatorial $R$-algebras, including a desingularization result for labelled Leavitt path algebras.
	
	\section{Preliminaries}
	For ease of reference, in this section, we recreate the definitions and constructions presented in \cite{zhang2025partialactionsgeneralizedboolean}. For a more in-depth discussion, refer to the aforementioned paper.
	
	\begin{definition}[{\cite[Section~2.1]{zhang2025partialactionsgeneralizedboolean}}]
		
		$\mathcal B$ is a \textit{generalized Boolean algebra} if it is a lattice that is distributed, relatively complemented, and has a minimal element $0$. Morphisms of a generalized Boolean algebra are maps of lattices that preserve the operations and bottom element. A subset $\mathcal I \subseteq \mathcal B$ is called an ideal if for all $a, b \in \mathcal I$ we have that $a \vee b \in \mathcal I$ and for all $a \in \mathcal I$ and $b \in \mathcal B$ we have that $a \wedge b \in \mathcal I$. A \textit{cover} $S \subseteq \mathcal B$ of $\mathcal B$ is a set such that any element $x \in B$ is less than the finite union of some $\bigvee_{i=1}^n s_i$ for $s_i \in C$. A \textit{generator} $S \subseteq \mathcal B$ of $\mathcal B$ is a set where every element in $\mathcal B$ can be formed from some finite sequence of operations of elements from $S$.
		
		A partial action of a group $G$ on a generalized Boolean algebra $\mathcal B$ is a pair $\Phi = (\{\mathcal I_t\}_{t \in G}, \{\phi_t\}_{t \in G})$ where $\mathcal I_t \subseteq \mathcal B$ are ideals and $\phi_t: \mathcal I_{t^{-1}} \rightarrow \mathcal I_t$ are generalized Boolean algebra isomorphisms such that the following properties are satisfied: 
		\begin{enumerate}
			\item $\mathcal I_e = \mathcal B$ and $\phi_e$ is the identity on $\mathcal B$
			\item $\phi_s(\mathcal I_{s^{-1}} \cap \mathcal I_t) = \mathcal I_s \cap \mathcal I_{st}$ for all $s, t \in G$
			\item $\phi_s(\phi_t(x)) = \phi_{st}(x)$ for $x \in \mathcal I_{s^{-1}} \cap \mathcal I_{(st)^{-1}}$ for all $s, t \in G$
		\end{enumerate}
		We often also refer to $(\mathcal B, \Phi)$ as a partial action on a generalized Boolean algebra to keep the set $\mathcal B$ in the notation. Morphisms of partial actions of a group $G$ from $(\mathcal B_1, \Phi_1) \rightarrow (\mathcal B_2, \Phi_2)$ are formed by generalized Boolean algebra morphisms $f: \mathcal B_1 \rightarrow \mathcal B_2$ that satisfy the following properties.
		\begin{enumerate}
			\item $f(\mathcal I_{1, t}) \subseteq \mathcal I_{2, t}$ for all $t \in G$
			\item $f$ induces the following commutative diagram for all $t \in G$
			\begin{center}
				\begin{tikzcd}
					\mathcal I_{1, t^{-1}} \arrow[r, "f"] \arrow[d, "\phi_{1, t}"] & \mathcal I_{2, t^{-1}} \arrow[d, "\phi_{2, t}"] \\
					\mathcal I_{1, t} \arrow[r, "f"] & \mathcal I_{2, t} \\
				\end{tikzcd}
			\end{center}
			
		\end{enumerate}
		If $f$ is injective, we say that $(\mathcal B_1, \Phi_1) \subseteq (\mathcal B_2, \Phi_2)$ is a partial subaction. 
		
	\end{definition}

	The specifics of the next definition are not particularly important, so we merely define the notation and refer to \cite[Lemma~2.24]{zhang2025partialactionsgeneralizedboolean} for specific calculations involving elements.

	\begin{definition}[{\cite[Section~2.2]{zhang2025partialactionsgeneralizedboolean}}]
		To a generalized Boolean algebra $\mathcal B$ and a commutative unital ring $R$, we associate an $R$-algebra $\mathrm{Lc}(R, \mathcal B)$. To a partial action $\Phi = (\{\mathcal I_t\}_{t \in G}, \{\phi_g\}_{t \in G})$ of a group $G$ on a generalized Boolean algebra $\mathcal B$, we associate a dual action $(\{\mathrm{Lc}(\mathcal I_g, R)\}_{g \in G}, \{\tilde{\phi}_g\}_{g \in G})$, which induces a partial skew group ring denoted as $\mathrm{Lc}(\mathcal B, R) \rtimes_{\Phi} G$. This $R$-algebra is $R$-spanned by elements of the form $\{U \delta_g\}_{g \in G, U \in \mathcal I_g}$ and calculations involving these elements are described in \cite[Lemma~2.23]{zhang2025partialactionsgeneralizedboolean}.
		
		Morphisms from $(\mathcal B_1, \Phi_1) \rightarrow (\mathcal B_2, \Phi_2)$ induce morphisms $\mathrm{Lc}(R, \mathcal B_1) \rtimes_{\Phi_1} G \rightarrow \mathrm{Lc}(R, \mathcal B_2) \rtimes_{\Phi_2} G$. Furthermore, if $(\mathcal B_1, \Phi_1) \subseteq (\mathcal B_2, \Phi_2)$, we get that $\mathrm{Lc}(R, \mathcal B_1) \rtimes_{\Phi_1} G \subseteq \mathrm{Lc}(R, \mathcal B_2) \rtimes_{\Phi_2} G$ as a homogenous $G$-graded $R$-algebra.
	\end{definition}

	\begin{lemma} \label{lemma:ident}
	Let $(\mathcal B_1, \Phi_1) \subseteq (\mathcal B_2, \Phi_2)$ be a partial subaction of a group $G$ on generalized Boolean algebras. Then $\mathcal I_{1, g} \subseteq \mathcal I_{2, g}$ for all $g \in G$. Furthermore, the elements $\mathrm{Lc}(R, \mathcal B_1) \rtimes_{\Phi_1} G \subseteq \mathrm{Lc}(R, \mathcal B_2) \rtimes_{\Phi_2} G$ are exactly the $R$-span of $\{U \delta_g\}_{g \in G, U \in \mathcal I_{1, g}}$ where we take the identification $\mathcal I_{1, g} \subseteq \mathcal I_{2, g}$.
	\end{lemma}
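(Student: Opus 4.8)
The plan is to reduce the statement to the corresponding facts established in \cite{zhang2025partialactionsgeneralizedboolean} for partial actions and their dual actions, using that the inclusion morphism $f\colon\mathcal B_1\hookrightarrow\mathcal B_2$ is by hypothesis an injective morphism of partial actions. First I would recall the definition of a morphism of partial actions: condition (1) in that definition states precisely that $f(\mathcal I_{1,g})\subseteq\mathcal I_{2,g}$ for all $g\in G$, so the first assertion $\mathcal I_{1,g}\subseteq\mathcal I_{2,g}$ is immediate once we suppress $f$ and regard $\mathcal B_1$ as a sublattice of $\mathcal B_2$ via the injection. (Strictly, $f$ need not be the literal inclusion, but injectivity lets us identify $\mathcal B_1$ with its image, and this is the identification referred to in the statement.)

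Next I would unwind the construction of $\mathrm{Lc}(R,\mathcal B_i)\rtimes_{\Phi_i}G$ recalled in the preceding definition: it is the $R$-span of the symbols $\{U\delta_g : g\in G,\ U\in\mathcal I_{i,g}\}$, with the multiplication rules of \cite[Lemma~2.23]{zhang2025partialactionsgeneralizedboolean}. The preceding definition already asserts that a morphism $(\mathcal B_1,\Phi_1)\to(\mathcal B_2,\Phi_2)$ induces an $R$-algebra morphism $\mathrm{Lc}(R,\mathcal B_1)\rtimes_{\Phi_1}G\to\mathrm{Lc}(R,\mathcal B_2)\rtimes_{\Phi_2}G$, and that when $f$ is injective this induced map is an inclusion of homogeneous $G$-graded $R$-algebras. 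So the content left to verify is the explicit description of the image: that the subring equals the $R$-span of $\{U\delta_g : g\in G,\ U\in\mathcal I_{1,g}\}$. I would argue this degree by degree. On the degree-$g$ component, the induced map sends $U\delta_g\mapsto f(U)\delta_g$; hence the image of the degree-$g$ component of $\mathrm{Lc}(R,\mathcal B_1)\rtimes_{\Phi_1}G$ is the $R$-span of $\{f(U)\delta_g : U\in\mathcal I_{1,g}\}$. Using the identification $f(\mathcal I_{1,g})\subseteq\mathcal I_{2,g}$ and the fact (from \cite{zhang2025partialactionsgeneralizedboolean}, e.g.\ the analysis around \cite[Lemma~2.24]{zhang2025partialactionsgeneralizedboolean}) that $\mathrm{Lc}(R,f)$ embeds $\mathrm{Lc}(R,\mathcal B_1)$ onto the $R$-submodule of $\mathrm{Lc}(R,\mathcal B_2)$ spanned by $\{f(U):U\in\mathcal B_1\}$, this image is exactly the $R$-span of $\{V\delta_g : V\in f(\mathcal I_{1,g})\}$, which under the identification is the $R$-span of $\{U\delta_g : U\in\mathcal I_{1,g}\}$. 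Summing over $g\in G$ and using that both algebras are $G$-graded with the inclusion respecting the grading gives the claim.

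The main obstacle I anticipate is purely bookkeeping: making precise the identification $\mathcal I_{1,g}\subseteq\mathcal I_{2,g}$ when $f$ is an abstract injection rather than a set-theoretic inclusion, and checking that the spanning set $\{U\delta_g\}$ for $\mathrm{Lc}(R,\mathcal B_1)\rtimes_{\Phi_1}G$ is mapped to a spanning set with no collapsing — i.e.\ that $R$-linear independence relations are preserved, so that the image is genuinely described by the stated span and not something smaller. This is where injectivity of $f$ and the corresponding injectivity of $\mathrm{Lc}(R,f)$ are essential, and I would cite the relevant lemma from \cite{zhang2025partialactionsgeneralizedboolean} rather than re-proving it. Everything else is a direct transcription of the definitions recalled in the Preliminaries.
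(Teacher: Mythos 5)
Your proposal is correct and takes essentially the same route as the paper: both parts reduce to the definition of a morphism of partial actions and to the results of \cite{zhang2025partialactionsgeneralizedboolean} on the induced graded inclusion of algebras (the paper cites \cite[Theorem~2.31]{zhang2025partialactionsgeneralizedboolean} for the identification of the subalgebra, which is exactly the degree-by-degree injectivity argument you sketch). Your version just spells out the bookkeeping that the paper delegates to that citation.
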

	
	\begin{proof}
		$\mathcal I_{1, g} \subseteq \mathcal I_{2, g}$ is obvious from the definition of $(\mathcal B_1, \Phi_1) \subseteq (\mathcal B_2, \Phi_2)$. The identification of the subalgebra follows from \cite[Theorem~2.31]{zhang2025partialactionsgeneralizedboolean}.
	\end{proof}
	
	\begin{definition}[{\cite[Section~3]{zhang2025partialactionsgeneralizedboolean}}]
		Let $P$ be a (meet) semilattice with $0$. For $x \in P$, a \textit{cover} of $x$ is a collection $\{x_1, \ldots, x_n\} \subseteq P$ such $x_i \leq x$ and for any $y \in P$ such that $y \leq x$ we have that there is some $i \in [1, n]$ where $y \wedge x_i \neq 0$. A \textit{filter} $\xi \subseteq P$ is any subset such that for any $x, y \in \xi$ we have that $x \wedge y \in \xi$ and for any $x \in \xi$ and $x \leq y \in P$ we have that $y \in \xi$. A \text{tight filter} is any filter $\xi$ such that for any element $x \in \xi$ and any cover $\{x_1, \ldots, x_n\}$ of $x$, we have that there is some $i$ where $x_i \in \xi$.

		The set of all tight filters $T(P)$ has a topology generated by the compact open sets $V^P_x = \{\xi \in T(P) \colon x \in \xi\}$ for any $x \in P$. When the semilattice $P$ is clear, we simply denote the set as $V_x$. We denote the compact open sets of $T(P)$ as $\mathcal T_c(P)$.

		For any subsemilattice $P_1 \subseteq P_2$, we say that $P_1 \subseteq P_2$ preserves finite covers if for any $x \in P_1$ and $\{x_1, \ldots, x_n\}$ a finite cover of $x$ in $P_1$, we have that $\{x_1, \ldots, x_n\}$ is also a finite cover of $x$ in $P_2$. If $P_1 \subseteq P_2$ preserves finite covers, then there is a natural inclusion $\mathcal T_c(P_1) \subseteq \mathcal T_c(P_2)$ given by $V^{P_1}_x \mapsto V^{P_2}_x$. If $\mathcal T_c(P_1) \subseteq \mathcal T_c(P_2)$ is also an ideal, we say that $P_1 \subseteq P_2$ is \textit{tight}.
	\end{definition}

	\begin{definition}[{\cite[Section~4.1]{zhang2025partialactionsgeneralizedboolean}}] \label{definition:stronglyunit}
		
		Let $S$ be an inverse semigroup with $0$ with idempotents $E$, and let $S^{\times} = S \setminus \{0\}$ and $E^\times= E \setminus \{0\}$. $E$ has a natural order defined by $a \leq b \Leftrightarrow ab = a$ and forms a (meet) semilattice with $a \wedge b = ab$.
		
		$S$ is called \textit{strongly $E^{\ast}$-unitary} if there exists a group $G$ and a map $\varphi: S^{\times} \rightarrow G$ where $\varphi^{-1}(1_G) = E^{\times}$ and such that $\varphi(ab) = \varphi(a)\varphi(b)$ for all $a, b \in S^{\times}$ with $ab \neq 0$. We sometimes denote the strongly $E^{\ast}$-unitary inverse semigroup as $(S, \varphi)$ to keep a specific grading in mind.
	\end{definition}

	\begin{definition}[{\cite[Section~4.3]{zhang2025partialactionsgeneralizedboolean}}]
		
		Fix a strongly $E^{\ast}$-unitary inverse semigroup $(S, \varphi)$ with group $G$ and morphism $\varphi: S^{\times} \rightarrow G$. For all $g \in G$, define the sets $E_g = \{x \in E \colon x \leq ss^{\ast} \text{ for some } s \in S \text{ with } \varphi(s) = g\} \subseteq E$ which are all semilattices as well. We define morphisms $\phi_g: E_{g^{-1}} \rightarrow E_g$ and that for $x \in E_{g^{-1}}$ map to $\phi_g(x) = sxs^{\ast}$ where $s \in S$ is any element where $\varphi(s) = g$ and $x \leq s^{\ast}s$. The inverse semigroup $(S, \varphi)$ induces a partial action of $G$ on the generalized Boolean algebra $\mathcal T_c(E)$ with ideals identified with $\mathcal T_c(E_g)$ and actions on ideals $\tilde{\phi}_g: \mathcal T_c(E_{g^{-1}}) \rightarrow \mathcal T_c(E_g)$ generated by $V_x \mapsto V_{\phi_g(x)}$. We denote $\Phi = (\{\mathcal T_c(E_g)\}_{g \in G}, \{\tilde \phi_g\}_{g \in G}\}$ define $L_R(S, \varphi) = \mathrm{Lc}(R, \mathcal T_c(E)) \rtimes_{\Phi} G$. The $R$-algebra $L_R(S, \varphi)$ is generated by elements of the form $\{x \delta_g\}_{g \in G, x \in E_g}$ and calculations involving these elements are described in \cite[Lemma~4.15]{zhang2025partialactionsgeneralizedboolean}. 
		
		As long as $S$ is strongly $E^{\ast}$-unitary, the resulting algebra $L_R(S, \varphi)$ is isomorphic irrespective of the chosen group $G$ and map $\varphi: S^{\times} \rightarrow G$, hence we unambiguously write $L_R(S)$ whenever we don't need to keep a specific grading in mind. For two strongly $E^{\ast}$-unitary inverse semigroups $S_1$ and $S_2$, we say that $S_1 \subseteq_c S_2$ if $S_1 \subseteq S_2$ and $E_1 \subseteq E_2$ preserves finite covers. If $S_1 \subseteq_c S_2$, then for any group $G$ and grading $\varphi: S_2^{\times} \rightarrow G$ on $S_2$ there is an induced grading $\varphi_{\mid S_1^{\times}}$ on $S_1$. Furthermore, the induced partial actions from these gradings satisfy $(\mathcal T_c(E_1), \Phi_1) \subseteq (\mathcal T_c(E_2), \Phi_2)$ and hence $L_R(S_1) \subseteq L_R(S_2)$.
	\end{definition}
	
	\section{Morita Equivalence of Subrings for Rings with Local units}\label{moritaequivsect}
	
	When we refer to Morita equivalence in this paper, we are referring to the notion of Morita equivalence in rings with local units first developed by Abrams in \cite{Abrams01011983} and further explored by Anh and Marki in \cite{AnhMoritaEquivalenceLocal}. We will denote Morita equivalent rings with local units $R$ and $S$ as $R \sim_M S$.
	
	In this section, we present a technique to prove that a subring $S \subseteq R$ of a ring with local units is Morita equivalent. We will later apply this to the partial skew group rings and derive various conditions to make the rings Morita equivalent. Most of this section is a generalization of the ideas presented in \cite{abrams2008leavitt} and \cite{Firrisa2020MoritaEO} to prove a desingularization result for graph and ultragraph algebras respectively.
	
	A significant generalization of this section is removing the countability assumption used previously by being slightly more careful with our directed limits. This lets our results apply to all rings with local units instead of just rings with $\sigma$-units, which will be useful later when we apply our results to objects that aren't necessarily countable.

	\begin{definition}[{\cite[Definition~1]{AnhMoritaEquivalenceLocal}}]
		A ring $R$ (not necessarily unital) is called a \textit{ring with local units} if there exists some set $E \subseteq R$ such that elements of $E$ are idempotents, commute with each other, and $\bigcup_{e \in E} eRe = R$.
		
		For a ring $R$ with local units, the category of (left) $R$-modules is defined to be the modules $M$, with the additional requirement $RM = M$. These are often called unitary modules. Morphisms of $R$-modules are defined in the usual way.
		
		Rings with local units $R$ and $S$ are said to be Morita equivalent if their respective categories $R$-Mod and $S$-Mod of (unitary) modules are equivalent. 
	\end{definition}
	
	All modules we consider in this section will be left modules, so we omit this from our notation.
	
	\begin{example} For a ring $R$ with local units, we present examples of $R$-modules and morphisms that are used throughout this section.
		
		Let $\{M_i\}_{i \in I}$ be a set of $R$-modules over some indexing set $I$. Then \[\bigoplus_{i \in I} M_i = \{(m_i)_{i \in I} \colon m_i \in M_i \text{ and all but finitely many } m_i = 0\}\] is an $R$-module
		
		Let $x \in R$, then \[Rx = \{rx \colon r \in R\}\] is a finitely generated projective $R$-module.
		
		Let $x, y \in R$. Then there is an $R$-module morphism from $Rx \rightarrow Ry$ defined by taking \[rx \mapsto rxy\]
	\end{example}
	\begin{definition}[{\cite[Definition~5.19]{Firrisa2020MoritaEO}}] \label{compatibleset}
		Let $(I,\leq)$ be a partially ordered set and let $\mathcal{C}$ be a category. A \textit{compatible set in $\mathcal{C}$ over $I$}, $\{A_{\alpha},\varphi_{\alpha,\beta},\psi_{\beta,\alpha},I\}$, consists of a set of objects, $\{A_{\alpha}\}_{\alpha\in I}$, and a set of morphisms, 
		$$\{\varphi_{\alpha,\beta}:A_{\alpha}\to A_{\beta},\ \psi_{\beta,\alpha}:A_{\beta}\to A_{\alpha}\}_{\alpha,\beta\in I},$$
		such that:\\
		1) $\varphi_{\alpha,\alpha}=\psi_{\alpha,\alpha}=1_{A_{\alpha}}$ for all $\alpha$,\\
		2) $\varphi_{\beta,\gamma}\circ\varphi_{\alpha,\beta}=\varphi_{\alpha,\gamma}$ and $\psi_{\beta,\alpha}\circ\psi_{\gamma,\beta}=\psi_{\gamma,\alpha}$ for all $\alpha\leq\beta\leq\gamma$,\\
		3) $\psi_{\beta,\alpha}\circ\varphi_{\alpha,\beta}=1_{A_{\alpha}}$ for all $\alpha\leq\beta$,\\
		4) For all $\alpha,\beta,\gamma\in I$ such that $\alpha,\beta\leq\gamma$,
		$$\varphi_{\beta,\gamma}\circ\psi_{\gamma,\beta}\circ\varphi_{\alpha,\gamma}\circ\psi_{\gamma,\alpha}=\varphi_{\alpha,\gamma}\circ\psi_{\gamma,\alpha}\circ\varphi_{\beta,\gamma}\circ\psi_{\gamma,\beta}.$$
		
		Note that the collection $\langle \{A_{\alpha}\}_{\alpha \in I}, \{\varphi_{\alpha, \beta}\}_{\alpha \leq \beta }\rangle$ forms a directed set.
	\end{definition}

	\begin{remark}[{\cite[Proposition~5.24.1]{Firrisa2020MoritaEO}}]\label{endodirect}
		For a compatible set with the category of $R$-Mod for some ring with local units $R$, there is a directed set of $R$-algebras/rings defined by $\langle \mathrm{End}(A_{\alpha}, A_{\alpha})^{\mathrm{op}}, \phi_{\alpha, \beta} \rangle$ where $\phi_{\alpha, \beta}(f) = \varphi_{\alpha, \beta} \circ f \circ \psi_{\beta, \alpha}$.
	\end{remark}
	\begin{definition}[{\cite[Definition~5.24]{Firrisa2020MoritaEO}}]\label{localprojmod} 
		An $R$-module, $M$, is \textit{locally projective} if there exists a compatible set $\{M_{\alpha},\varphi_{\alpha,\beta},\psi_{\beta,\alpha},I\}$ in $R$-mod such that each $M_{\alpha}$ is a finitely generated projective $R$-module and $\langle \{M_{\alpha}\}_{\alpha \in I}, \{\varphi_{\alpha, \beta}\}_{\alpha \leq \beta} \rangle$ satisfies $M\cong\varinjlim\limits_{\alpha\in I}M_{\alpha}$.
	\end{definition}
	
	\begin{definition}[{\cite[Definition~5.22]{Firrisa2020MoritaEO}}] \label{rmodgen} 
		An $R$-module $M$ is a \textit{generator of $R$-mod} if for any $R$-module $N$ there exists a set $I$ and a surjective $R$-module homomorphism
		$$\rho:\bigoplus\limits_{i\in I}M\to N$$
	\end{definition}
	
	\begin{theorem} \label{simpgenerator}
		A module $M$ is a generator of $R$-mod if and only if there exists a set $I$ and a surjective $R$-module homomorphism 		$$\rho:\bigoplus\limits_{i\in I}M\to R$$
	\end{theorem}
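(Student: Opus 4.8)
The plan is to prove the two implications separately, with the forward direction being essentially immediate and the reverse direction requiring one genuine observation about unitary modules over rings with local units.

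For the ``only if'' direction, I would simply note that $R$ is itself a unitary left $R$-module: since $R$ has local units, every $x \in R$ lies in $eRe$ for some idempotent $e \in E$, hence $x = ex \in RR$, so $RR = R$. Applying Definition~\ref{rmodgen} with $N = R$ then yields the desired surjection $\bigoplus_{i \in I} M \to R$ directly.

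For the ``if'' direction, suppose $\rho \colon \bigoplus_{i \in I} M \to R$ is a surjective $R$-module homomorphism and let $N$ be an arbitrary unitary $R$-module. The key step is to build a surjection from a ``free-type'' module onto $N$: for each $n \in N$ the map $R \to N$, $r \mapsto rn$, is $R$-linear, and assembling these over all $n \in N$ gives an $R$-module homomorphism $\pi \colon \bigoplus_{n \in N} R \to N$. This is surjective precisely because $N$ is unitary: any $m \in N$ can be written $m = \sum_k r_k n_k$ with $r_k \in R$ and $n_k \in N$, which exhibits $m$ in the image of $\pi$. Next, since forming direct sums is functorial and preserves surjections, $\rho$ induces a surjection $\bigoplus_{n \in N}\bigl(\bigoplus_{i \in I} M\bigr) \cong \bigoplus_{(i,n) \in I \times N} M \twoheadrightarrow \bigoplus_{n \in N} R$. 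Composing this with $\pi$ produces a surjective $R$-module homomorphism $\bigoplus_{(i,n) \in I \times N} M \to N$, which is exactly what Definition~\ref{rmodgen} requires, with $I \times N$ as the indexing set.

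The only place that needs care --- the ``main obstacle'', though it is mild --- is that we work in the category of \emph{unitary} modules rather than all modules, so we cannot simply invoke the existence of a free module surjecting onto $N$. The substitute is the observation above that $RN = N$ forces $\pi$ to be surjective; once that is in hand, the rest is a routine manipulation of direct sums. I would also double-check that $\bigoplus_{n \in N} R$ and $\bigoplus_{(i,n) \in I \times N} M$ are themselves unitary (a direct sum of unitary modules is unitary), so that every map used genuinely lives in $R$-Mod.
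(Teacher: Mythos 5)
Your proof is correct and follows essentially the same route as the paper: the surjection $\bigoplus_{n\in N}R\to N$ (surjective because $N$ is unitary), composed with the direct sum of copies of $\rho$ indexed over $I\times N$. The extra care you take in verifying that $R$ and the direct sums are themselves unitary is a detail the paper leaves implicit, but the argument is the same.
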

	
	\begin{proof}
		Assume that $M$ is an $R$-module, $I$ is an indexing set, and there is a surjective homomorphism  $\rho: \bigoplus_{i \in I} M \rightarrow R$. For any $R$-module $N$, there is a surjective $R$-module homomorphism $\bigoplus_{n \in N} R \rightarrow N$ defined by taking $r_n \mapsto r_n n$. This is surjective because $RN = N$.  There is then a surjective $R$-module homomorphism $\bigoplus_{(i, n) \in I \times N} M \rightarrow \bigoplus_{n \in N} R$ which extends to a surjective morphism to $N$, so we are done.
		
		The other direction is obvious, as $R$ is an $R$-module.
	\end{proof}
	
	Below is the main theorem relating the above definitions and Morita equivalence.
	
	\begin{theorem}[{\cite[Theorem~5.26]{Firrisa2020MoritaEO} and \cite[Theorem~2.5]{AnhMoritaEquivalenceLocal}}] \label{localequiv}
		Let $R$ and $S$ be rings with local units. $R$ and $S$ are Morita equivalent if and only if there exists a locally projective $R$-module $M$ such that $M$ is a generator of $R$-mod and $S \cong \varinjlim\limits_{\alpha \in I} \left(\mathrm{End}_R M_{\alpha}\right)^{op}$.
	\end{theorem}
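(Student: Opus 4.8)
The statement is essentially a repackaging of the two cited results, and the plan is to deduce it from the characterization of Morita equivalence for rings with local units in terms of \emph{surjective Morita contexts}: by \cite[Theorem~2.5]{AnhMoritaEquivalenceLocal}, $R \sim_M S$ if and only if there is a Morita context consisting of a unitary $(R,S)$-bimodule $M$, a unitary $(S,R)$-bimodule $N$, and pairings $\tau\colon M\otimes_S N\to R$ and $\mu\colon N\otimes_R M\to S$ satisfying the usual associativity identities and \emph{both surjective}. So it suffices to show that such a context exists precisely when $R$-mod admits a locally projective generator $M$ with $S\cong\varinjlim_{\alpha\in I}(\mathrm{End}_R M_\alpha)^{\mathrm{op}}$.

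For the ``if'' direction, let $\{M_\alpha,\varphi_{\alpha,\beta},\psi_{\beta,\alpha},I\}$ be the compatible set witnessing local projectivity, with $M\cong\varinjlim_\alpha M_\alpha$ and $S\cong\varinjlim_\alpha(\mathrm{End}_R M_\alpha)^{\mathrm{op}}$. By Remark~\ref{endodirect} the right actions of $(\mathrm{End}_R M_\alpha)^{\mathrm{op}}$ on the $M_\alpha$ are compatible with the transition maps, so $M$ becomes an $(R,S)$-bimodule; dually, the modules $\mathrm{Hom}_R(M_\alpha,R)$ form a compatible set whose colimit $N$ is an $(S,R)$-bimodule, and evaluation together with the dual bases of the finitely generated projective modules $M_\alpha$ assemble to pairings $\tau$ and $\mu$ obeying the context identities. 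It remains to see that $\tau$ and $\mu$ are surjective. The image of $\mu$ is the directed union of the images of $\mathrm{Hom}_R(M_\alpha,R)\otimes_R M_\alpha\to(\mathrm{End}_R M_\alpha)^{\mathrm{op}}$; each of these is the full ring, because $M_\alpha$ is finitely generated projective and its dual basis represents $1_{M_\alpha}$, so $\mu$ is onto $S$. The image of $\tau$ is the trace ideal $\mathrm{tr}(M)=\bigcup_\alpha\mathrm{tr}(M_\alpha)$, a two-sided ideal of $R$; since $M$ is a generator, Theorem~\ref{simpgenerator} yields a surjection $\bigoplus_I M\twoheadrightarrow R$, so every local unit $e$ of $R$ lies in some $\mathrm{tr}(M_\alpha)$, whence $eRe\subseteq\mathrm{tr}(M)$ and $\mathrm{tr}(M)=R$. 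This produces the required surjective context, and \cite[Theorem~2.5]{AnhMoritaEquivalenceLocal} gives $R\sim_M S$.

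For the ``only if'' direction, fix an equivalence $G\colon S\text{-mod}\to R\text{-mod}$. First observe that $S$, viewed as a left $S$-module, is itself a locally projective generator: writing the local units of $S$ as a commuting family of idempotents $F$ closed under finite joins, so that $(F,\preceq)$ with $f\preceq f'\Leftrightarrow ff'=f'f=f$ is directed, the left ideals $Sf$ are finitely generated projective unitary $S$-modules, the inclusions $Sf\hookrightarrow Sf'$ together with the splittings $x\mapsto xf$ form a compatible set, $S=\varinjlim_f Sf$ since $S$ is unitary, and $S$ surjects onto every unitary $S$-module. Applying $G$ transports all of this: set $M:=G(S)$ and $M_f:=G(Sf)$. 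An equivalence preserves finitely generated projective objects, colimits, and generators, and induces ring isomorphisms on endomorphism rings, so $M$ is a locally projective generator of $R$-mod with compatible set $\{M_f\}$, and $(\mathrm{End}_R M_f)^{\mathrm{op}}\cong(\mathrm{End}_S Sf)^{\mathrm{op}}\cong fSf$, hence $\varinjlim_f(\mathrm{End}_R M_f)^{\mathrm{op}}\cong\varinjlim_f fSf=S$, as needed.

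The step I expect to require the most care is the bookkeeping around the directed limits in the category of \emph{unitary} modules: verifying that the system $\{Sf\}$ (and its image $\{M_f\}$) really satisfies all four compatible-set axioms of Definition~\ref{compatibleset} — axiom (4) being the one that relies on commutativity of the local units — and that its colimit computed in $R$-mod is genuinely $M$ rather than some non-unitary object or completion. Keeping the sidedness of every bimodule action consistent while assembling $\tau$, $\mu$, and $N$ in the ``if'' direction is the other place where slips are easy; the remaining ingredients are the standard finitely-generated-projective and trace-ideal arguments, adapted to local units, together with the cited theorems of Ánh–Márki and Firrisa.
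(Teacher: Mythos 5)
The paper does not prove this statement at all: it is quoted verbatim from the cited sources (Firrisa, Theorem 5.26, and \'{A}nh--M\'{a}rki, Theorem 2.5) and used as a black box, so there is no in-paper argument to compare against. Your reconstruction is correct and follows the standard route of those sources: the ``only if'' direction by transporting the canonical compatible set $\{Sf\}_{f\in F}$ of the regular module through the equivalence (with axiom (4) supplied by commutativity of the local units), and the ``if'' direction by assembling a surjective Morita context from the dual modules $\mathrm{Hom}_R(M_\alpha,R)$, the dual-basis argument for surjectivity onto $S$, and the trace-ideal argument for surjectivity onto $R$; the only caveat is the bookkeeping you already flag, plus making sure the surjective-context criterion you invoke is the form of \'{A}nh--M\'{a}rki's theorem you cite rather than the statement being proved, which is a citation detail rather than a mathematical gap.
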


	\begin{remark}
		We take the opposite ring which differs from the original paper \cite[Theorem~2.5]{AnhMoritaEquivalenceLocal}. This is because, in \cite{AnhMoritaEquivalenceLocal}, $\mathrm{End}_R(M)$ is interpreted as a ring of right actions on $M$. In this paper, we interpret $\mathrm{End}_R(M)$ as actual module morphisms $f: M \rightarrow M$ where multiplication $f \cdot g = f \circ g$ is just composition. The opposite of this ring is exactly the ring in \cite{AnhMoritaEquivalenceLocal}, even though they are denoted the same.
	\end{remark}

	We now generalize a process used in \cite{Firrisa2020MoritaEO, Abrams2017} to arbitrary rings with local units and provide some tractable conditions that are sufficient to conclude that a subring $S \subseteq R$ is Morita equivalent to $R$. For the rest of this section, assume that $R$ is a ring with a set of local units $E$.
	
	\begin{lemma} Let $E$ be a set of local units for a ring $R$ with local units.
		Let $e_1, e_2 \in E$ and define the following join and meet operations.
		
		\[e_1 \vee e_2 = e_1 + e_2 - e_1e_2\]
		\[e_1 \wedge e_2 = e_1e_2\]
		
		If $x, y \in R$ satisfy $e_1x = x$ and $e_2y = y$, then $(e_1 \vee e_2)x = x$ and $(e_1 \vee e_2)y = y$. Furthermore, $E \cup \{e_1 \vee e_2, e_1 \vee e_2\}$ is still a set of local units for $R$. 
	\end{lemma}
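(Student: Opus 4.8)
The plan is to reduce every assertion to the arithmetic of commuting idempotents. For the first claim I would expand
\[(e_1 \vee e_2)x = e_1 x + e_2 x - e_1 e_2 x,\]
and use that $e_1$ and $e_2$ commute together with $e_1 x = x$ to rewrite $e_1 e_2 x = e_2 e_1 x = e_2(e_1 x) = e_2 x$; the last two terms then cancel and we are left with $e_1 x = x$. The identity $(e_1 \vee e_2)y = y$ is symmetric: from $e_2 y = y$ one gets $e_1 e_2 y = e_1 y$, so $(e_1 \vee e_2)y = e_1 y + e_2 y - e_1 y = y$.

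For the second claim, write $E' = E \cup \{e_1 \vee e_2,\, e_1 \wedge e_2\}$ (reading the set displayed in the statement, which lists $e_1 \vee e_2$ twice, as also containing the meet). There are three things to verify: that the two new elements are idempotent, that $E'$ is still a commuting family of idempotents, and that $\bigcup_{e \in E'} eRe = R$. The last is immediate from $E \subseteq E'$: it forces $\bigcup_{e\in E'} eRe \supseteq \bigcup_{e\in E} eRe = R$, and the reverse inclusion holds since every $eRe \subseteq R$. That $e_1 \wedge e_2 = e_1 e_2$ is idempotent and commutes with each element of $E$ is immediate from idempotency and commutativity of $e_1$ and $e_2$. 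The one computation worth isolating is that $e_1 \vee e_2$ is idempotent; the cleanest route is to pass to the unitization $R^{1}$, where $e_1 \vee e_2 = 1 - (1-e_1)(1-e_2)$, observe that $1-e_1$ and $1-e_2$ are commuting idempotents, so that their product is a commuting idempotent and $1$ minus it is again an idempotent, and note that $1-(1-e_1)(1-e_2) = e_1 + e_2 - e_1 e_2$ lies in $R$; alternatively one expands $(e_1 + e_2 - e_1 e_2)^2$ directly, at which point the six resulting copies of $e_1 e_2$ collapse to a single $-e_1 e_2$. Finally, commutativity of $e_1 \vee e_2$ and $e_1 \wedge e_2$ with one another and with every $e \in E$ is automatic: each is a polynomial in the commuting pair $\{e_1, e_2\}$, and anything commuting with both $e_1$ and $e_2$ commutes with every such polynomial.

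There is essentially no obstacle here beyond careful bookkeeping; the only step that rewards a moment's thought is the idempotency of the join, and the identity $e_1 \vee e_2 = 1 - (1-e_1)(1-e_2)$ in $R^{1}$ reduces even that to a one-line observation. It is worth recording that this lemma is precisely the statement that a commuting family of local units may always be closed under the operations $\vee$ and $\wedge$ without enlarging $R$ or spoiling the defining property of local units, which is what makes the lattice notation $\vee, \wedge$ appropriate.
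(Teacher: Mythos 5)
Your proof is correct, and it is exactly the standard verification that the paper declines to write out (its proof reads only ``The proof is standard for rings with local units''). The cancellation $e_1e_2x = e_2x$, the idempotency check for $e_1 + e_2 - e_1e_2$, and the observation that polynomials in the commuting pair $e_1, e_2$ commute with everything in $E$ are precisely the intended bookkeeping; you are also right that the displayed set in the statement should read $E \cup \{e_1 \vee e_2,\, e_1 \wedge e_2\}$.
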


	\begin{proof}
		The proof is standard for rings with local units.
	\end{proof}

	The above lemma lets us assume, without loss of generality, that our set of local units $E$ is closed under join and meets. For any $E' \subseteq E$ that is also closed under join and meets, a partial order is defined on $E'$ by $e_1 \leq e_2 \Leftrightarrow e_1e_2 = e_1$. The partial order on $E'$ also makes $E'$ a directed set because $E'$ is closed under joins and meets. It's easy to prove that for $e_1 \leq e_2$, we have that $Re_1 \subseteq Re_2$ and $e_1Re_1 \subseteq e_2Re_2$. Hence, we have natural directed limits of $R$-modules and rings with morphisms of inclusion.
	
	Our next lemma calculates several direct limits of submodules and subrings of $R$ and shows that they nicely evaluate to submodules and subrings of $R$ as well.
	
	\begin{lemma} \label{arbdirectlimit} Let $E' \subseteq E$ be closed under joins and meets. Then,
		\[\varinjlim\limits_{e \in E'} Re \cong \bigcup_{e \in E'} Re\] as an $R$-module and \[\varinjlim\limits_{e \in E'} eRe \cong \bigcup_{e \in E'} eRe\] as a ring with local units.
		
		In particular, if $E' = E$, then $\varinjlim\limits_{e \in E'} eRe \cong R$.
	\end{lemma}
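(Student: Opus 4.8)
The plan is to establish both direct limit computations by exhibiting the obvious inclusion maps into the union and checking the universal property directly, since $E'$ being closed under joins and meets makes $\langle E', \leq\rangle$ a genuine directed set (every pair $e_1, e_2$ has the upper bound $e_1 \vee e_2$). First I would treat the module statement. For $e \le f$ in $E'$ we have the inclusion $\iota_{e,f}\colon Re \hookrightarrow Rf$, and these are compatible, so the family $\{Re\}_{e \in E'}$ with these inclusions forms a directed system of $R$-modules. The union $\bigcup_{e \in E'} Re$ receives compatible inclusion maps $\iota_e\colon Re \hookrightarrow \bigcup_{f \in E'} Rf$. To see this union is the colimit, I would verify the universal property: given an $R$-module $N$ with compatible maps $g_e\colon Re \to N$, define $g\colon \bigcup_e Re \to N$ by $g(x) = g_e(x)$ whenever $x \in Re$; this is well-defined because if $x \in Re \cap Rf$ then $x \in R(e \wedge f)$ (here I use that $x = ex = fx$ forces $x = (e\wedge f) x$ since $e,f$ commute and are idempotent, so $x \in R(e\wedge f)$, and $e \wedge f \le e, f$), and compatibility of the $g$'s through the common lower bound $e \wedge f$ gives $g_e(x) = g_{e\wedge f}(x) = g_f(x)$. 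Uniqueness of $g$ and $R$-linearity are immediate. Hence $\varinjlim_{e \in E'} Re \cong \bigcup_{e \in E'} Re$.

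For the ring statement the same argument applies verbatim with $Re$ replaced by $eRe$ and inclusions $eRe \hookrightarrow fRf$ for $e \le f$ (valid as noted in the paragraph preceding the lemma). The only additional point is well-definedness of the amalgamated map on overlaps: if $x \in e_1Re_1 \cap e_2Re_2$, then $e_1 x e_1 = x = e_2 x e_2$, and multiplying appropriately by the commuting idempotents shows $x = (e_1 \wedge e_2) x (e_1 \wedge e_2) \in (e_1\wedge e_2)R(e_1\wedge e_2)$, so again the values agree through the lower bound $e_1 \wedge e_2 \in E'$. One should also observe that $\bigcup_{e \in E'} eRe$ is itself a ring with local units — it is closed under multiplication since for $x \in e_1Re_1$, $y \in e_2Re_2$ we get $xy \in (e_1\vee e_2)R(e_1 \vee e_2)$, and the set $E'$ (or its image) serves as local units — so the colimit is taken, and attained, in the category of rings with local units. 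The colimit of a directed system of ring maps is computed on underlying sets, so the module-level argument transfers directly.

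Finally, when $E' = E$, by definition of a ring with local units we have $\bigcup_{e \in E} eRe = R$, so $\varinjlim_{e \in E} eRe \cong R$, giving the last assertion.

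The main obstacle, such as it is, is purely bookkeeping: making sure the well-definedness on overlaps genuinely uses the closure of $E'$ under meets (so that the common lower bound lies in $E'$ and not merely in $E$), and confirming that the directed colimit in the category of \emph{rings with local units} agrees with the set-theoretic union — i.e. that no completion or quotient is introduced. Both points are routine once one notes that filtered colimits of rings are computed on underlying sets and that $\bigcup_{e\in E'} eRe$ is already closed under the ring operations.
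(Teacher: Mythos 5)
Your proposal is correct and follows essentially the same route as the paper: identify the colimit with the union, use closure under joins to see the union is a module (resp.\ ring), and use closure under meets to get well-definedness of the amalgamated map on overlaps via the common lower bound $e\wedge f\in E'$. One small slip: for $x\in Re\cap Rf$ (left modules) the relevant identities are $x=xe=xf$, not $x=ex=fx$, whence $x=x(ef)\in R(e\wedge f)$; the conclusion and the rest of the argument are unaffected.
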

	\begin{proof}
		Both proofs are similar, so we just prove that \[\varinjlim\limits_{e \in E'} Re \cong \bigcup_{e \in E'} Re\] as an $R$-module. Let our directed limit be \[(\{Re_i\}_{e_i \in E'}, \{\psi_{ij}\}_{e_i \leq e_j})\] where $\psi_{ij}: Re_i \hookrightarrow Re_j$ is the injection. 
		
		We first prove that $\bigcup_{e \in E'} Re$ is an $R$-module. It is clearly closed under scalar multiplication. To prove that it is closed under addition, consider $r_1e_1 + r_2e_2$ for $r_1, r_2 \in R$ and $e_1, e_2 \in E'$. We have that $e_1 \vee e_2 \in E'$ by assumption, hence, $r_1e_1 + r_2e_2 = (r_1e_1 + r_2e_2) (e_1 \vee e_2) \in R(e_1 \vee e_2)$ so we have that our union is closed under addition as well.
		
		For any $e_i \in E'$, we clearly have an $R$-module morphism $\varphi_{e_i}: Re_i \rightarrow \bigcup _{e \in E'} Re$ for any $e_i \in E'$ and it satisfies the morphisms of our directed limits by construction. It remains to prove that $\bigcup _{e \in E'} Re$ with the maps $\varphi_e: Re \rightarrow \bigcup _{e \in E'} Re$ satisfy the universal property of the directed limit. 
		
		For $e_i \leq e_j \in E'$, let $\psi_{ij}$ denote the injection $Re_i \hookrightarrow Re_j$. Now let $M$ be another $R$-module that has maps $\{\varphi'_{e_i}: Re_i \rightarrow M\}_{e_i \in E'}$ from our diagram. If there is a map $f: \bigcup_{e \in E'} Re \rightarrow M$ that makes our diagram commute, it must be unique because every element in $\bigcup_{e \in E'} Re$ is in the image of $\varphi_{e_i}$ for some $e_i \in E$. 
		
		To prove that the morphism $f$ exists, for every $re \in \bigcup_{e \in E'} Re$, we define $f(re) = \varphi_e(re)$. This is clearly a valid homomorphism if it is consistent, so it remains to show that the definition is irrespective of representation. Consider $r_1e_1 = r_2e_2$ with their respective maps $\varphi_{e_i}: Re_i \rightarrow M$. We want to show that $\varphi_{e_1}(r_1e_1) = \varphi_{e_2}(r_2e_2)$. 
		
		Note that $r_1e_1 = r_2e_2$ implies $r_1e_1 = r_1e_1e_2 = r_2e_1e_2 = r_2e_2 \in R(e_1 \wedge e_2)$. Define $e_3 \coloneqq e_1 e_2$ which is in $E'$ because our local units are closed under meets. By the commutativity of our diagram, we have that $\varphi_{e_1}(r_1e_1) = \varphi_{e_3}(r_1(e_1e_2)) = \varphi_{e_2}(r_2e_2)$, so we are done.
		
		To prove the last statement, note that if $E = E'$, then we have that $\bigcup_{e \in E'} eRe = R$ by definition of local units. Hence,  \[\varinjlim\limits_{e \in E'} eRe \cong R\]
	\end{proof}
	
	Now fix a set of local units $E$ and a subset $E' \subseteq E$ closed under joins and meets. Our next lemma constructs a compatible set (Definition~\ref{compatibleset}) that will be used to build a locally projective module to apply Theorem~\ref{localequiv}.
	
	\begin{lemma} \label{constructcompat}
		$\{\{Re_i\}_{e_i \in E'}, \{\varphi_{ij}\}_{e_i \leq e_j}, \{\psi_{ji}\}_{e_i \leq e_j}\}$ is a compatible set in $R$-mod of finitely generated projective modules with $\varphi_{ij}: Re_i \hookrightarrow Re_j$  as injection and $\psi_{ji}: Re_j \mapsto Re_i$ as defined by $m \mapsto me_i$.
	\end{lemma}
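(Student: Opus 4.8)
The plan is to verify each of the four axioms of Definition~\ref{compatibleset} directly, since the maps $\varphi_{ij}$ and $\psi_{ji}$ are concrete and the computations are short. Throughout, recall that every $e_i \in E'$ is an idempotent and that the $e_i$ commute with one another, and that $e_i \leq e_j$ means $e_ie_j = e_je_i = e_i$. I will also note at the outset that each $Re_i$ is finitely generated projective as an $R$-module: it is a direct summand of $R$ (viewed via the idempotent $e_i$, so that $R \cong Re_i \oplus R(1-e_i)$ formally, or more carefully $Re \oplus \{r \in R : re = 0\}$ inside the appropriate local unit), and is generated by the single element $e_i$, so this part is immediate.

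First I would check axiom~(1): $\varphi_{ii} = \psi_{ii} = 1_{Re_i}$. The map $\varphi_{ii}$ is the inclusion $Re_i \hookrightarrow Re_i$, which is the identity, and $\psi_{ii}$ sends $m \mapsto me_i = m$ since $m \in Re_i$ already (every element of $Re_i$ is fixed on the right by $e_i$), so both are the identity. Next, axiom~(2): for $e_i \leq e_j \leq e_k$, the composite $\varphi_{jk} \circ \varphi_{ij}$ is an inclusion followed by an inclusion, hence equals the inclusion $\varphi_{ik}$; and $\psi_{ji} \circ \psi_{kj}$ sends $m \in Re_k$ first to $me_j$ then to $me_je_i = me_i$ (using $e_je_i = e_i$), which is exactly $\psi_{ki}(m)$. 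Axiom~(3): $\psi_{ji} \circ \varphi_{ij}$ sends $m \in Re_i$ to itself included in $Re_j$ and then to $me_i = m$, so this is $1_{Re_i}$.

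The only axiom requiring any thought is~(4), the commutativity condition: for $e_i, e_j \leq e_k$ we need
\[
\varphi_{jk} \circ \psi_{kj} \circ \varphi_{ik} \circ \psi_{ki} \;=\; \varphi_{ik} \circ \psi_{ki} \circ \varphi_{jk} \circ \psi_{kj}
\]
as endomorphisms of $Re_k$. The key observation is that each composite $\varphi_{\ell k} \circ \psi_{k\ell}$ for $e_\ell \leq e_k$ is simply the idempotent map $Re_k \to Re_k$, $m \mapsto me_\ell$ (apply $\psi_{k\ell}$ to land in $Re_\ell$, then reinclude). So the left side sends $m \mapsto (me_i)e_j = me_ie_j$ and the right side sends $m \mapsto (me_j)e_i = me_je_i$, and these agree because $e_i$ and $e_j$ commute. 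This commutativity of the local units is precisely what makes axiom~(4) hold, and it is the one place the hypothesis that $E$ (hence $E'$) consists of pairwise commuting idempotents is genuinely used. Having verified (1)--(4) and the finitely-generated-projective claim, the lemma follows; I expect no real obstacle, only bookkeeping. This compatible set is exactly what feeds into Lemma~\ref{arbdirectlimit} and Theorem~\ref{localequiv} to realize $R$ (or a subring corresponding to a suitable $E'$) as a direct limit of endomorphism rings.
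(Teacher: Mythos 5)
Your verification is correct and is exactly the routine check the paper has in mind: the paper's proof simply states that the checks are easy and defers to \cite[Section~10]{Firrisa2020MoritaEO}, whereas you carry them out explicitly, correctly identifying that axiom~(4) reduces to the commutativity of the idempotents in $E'$. No gaps.
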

	
	\begin{proof}
		The checks are all easy and done in \cite[Section~10]{Firrisa2020MoritaEO} for their specific case.
	\end{proof}
	
	Using Remark~\ref{endodirect} and the previous lemma, we construct a directed limit \[\langle \{\mathrm{End}_R(Re_i)^{\mathrm{op}}\}_{e_i \in E'}, \{\phi_{ij}\}_{e_i \leq e_j} \rangle\] The following lemma computes this direct limit. 
	
	\begin{theorem} \label{homdirlimit}
		Let $h_{ij}: e_iRe_i \hookrightarrow e_jRe_j$ for $e_i \leq e_j$ denote the injection.  Then,
		\[\left\langle \{\mathrm{End}_R(Re_i)^{\mathrm{op}}\}_{e_i \in E'}, \{\phi_{ij}\}_{e_i \leq e_j} \right\rangle \cong \left\langle \{e_i R e_i\}_{e_i \in E'}, \{h_{ij}\}_{e_i \leq e_j}\right\rangle\]
		as directed limits. Namely, \[\varinjlim\limits_{e_i \in E'} \mathrm{End}_R(Re_i)^{\mathrm{op}} \cong \bigcup_{e_i \in E'} e_i R e_i\] as rings with local units.
	\end{theorem}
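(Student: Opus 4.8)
The plan is to realize the claimed isomorphism of directed systems \emph{pointwise} and then invoke functoriality of $\varinjlim$. For each idempotent $e \in E'$ I will write down an explicit ring isomorphism $\Theta_e \colon eRe \to \mathrm{End}_R(Re)^{\mathrm{op}}$, check that the family $\{\Theta_{e_i}\}_{e_i \in E'}$ intertwines the connecting maps $h_{ij}$ on the left with the connecting maps $\phi_{ij}$ on the right, and conclude that $\bigl\langle \{e_iRe_i\}, \{h_{ij}\}\bigr\rangle \cong \bigl\langle \{\mathrm{End}_R(Re_i)^{\mathrm{op}}\}, \{\phi_{ij}\}\bigr\rangle$ as directed systems. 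Passing to limits and applying Lemma~\ref{arbdirectlimit} (which identifies $\varinjlim_{e_i\in E'} e_iRe_i$ with $\bigcup_{e_i\in E'}e_iRe_i$ as a ring with local units) then gives the last assertion.

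\textbf{Key steps.} First, for fixed $e\in E'$ define $\Theta_e(a)$ to be the left $R$-linear map $Re \to Re$, $y \mapsto ya$, for $a \in eRe$; this is well defined because $ya$ depends only on $ye = y$ and because $ya = yae \in Re$ (here $a = eae$, so $ea = a = ae$). In the reverse direction send $f \in \mathrm{End}_R(Re)$ to $f(e)$; since $f$ is left $R$-linear and $e^2 = e$ we get $f(e) = f(e\cdot e) = e\,f(e)$, and $f(e)\in Re$, so in fact $f(e) \in eRe$. One checks these are mutually inverse bijections and that $\Theta_e(ab) = \Theta_e(b)\circ\Theta_e(a)$, i.e. $\Theta_e$ carries the product of $eRe$ to the product of the \emph{opposite} of the endomorphism ring; thus $\Theta_e$ is a ring isomorphism (with $\Theta_e(e) = \mathrm{id}_{Re}$). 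Second, fix $e_i \le e_j$ in $E'$. Since $e_i = e_ie_j = e_je_i$ one has $e_iRe_i \subseteq e_jRe_j$, so $h_{ij}$ is the inclusion; and $\phi_{ij}(f) = \varphi_{ij}\circ f\circ\psi_{ji}$ with $\varphi_{ij}\colon Re_i\hookrightarrow Re_j$ the inclusion and $\psi_{ji}\colon m\mapsto me_i$ (Lemma~\ref{constructcompat}). For $a \in e_iRe_i$ and $z \in Re_j$ I compute $\bigl(\varphi_{ij}\circ\Theta_{e_i}(a)\circ\psi_{ji}\bigr)(z) = (ze_i)a = z(e_ia) = za$, which is precisely $\Theta_{e_j}(a)$ evaluated at $z$ (note $za = (za)e_j \in Re_j$ since $a = ae_j$). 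Hence $\phi_{ij}\circ\Theta_{e_i} = \Theta_{e_j}\circ h_{ij}$, which is exactly the compatibility required.

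\textbf{Conclusion and main obstacle.} Granting steps one and two, the maps $\Theta_{e_i}$ form an isomorphism of directed systems, so $\varinjlim_{e_i\in E'}\mathrm{End}_R(Re_i)^{\mathrm{op}} \cong \varinjlim_{e_i\in E'} e_iRe_i$, and Lemma~\ref{arbdirectlimit} rewrites the right-hand side as $\bigcup_{e_i\in E'} e_iRe_i$ as a ring with local units. The only delicate point is the bookkeeping in step one: verifying that the formulas are independent of the chosen representative $re = r'e$ of an element of $Re$, and matching the order-reversal in $\mathrm{End}_R(Re)^{\mathrm{op}}$ with the direction of multiplication in $eRe$ so that it is consistent with the definition of $\phi_{ij}$ in Remark~\ref{endodirect}; once the conventions are pinned down everything else is formal.
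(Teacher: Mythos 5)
Your proposal is correct and follows essentially the same route as the paper: both use the standard isomorphism $\mathrm{End}_R(Re)^{\mathrm{op}} \cong eRe$ (your $\Theta_e$ is exactly the inverse of the paper's $f \mapsto f(e)$), verify that it intertwines $\phi_{ij} = \varphi_{ij}\circ(-)\circ\psi_{ji}$ with the inclusion $h_{ij}$ by the same computation, and then pass to the directed limit via Lemma~\ref{arbdirectlimit}. Your write-up is merely more explicit about the opposite-ring bookkeeping, which the paper leaves as "well known."
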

	\begin{proof}
		It's well known that for any idempotent $e \in E$ there is a natural ring isomorphism from \[\mathrm{End}(Re)^{\mathrm{op}} \xrightarrow{\sim} eRe\]  \[f \mapsto f(e)\] We just have to prove that the morphism (and its inverse) commutes under the direct limit on $E'$. Namely, the following diagram must commute for $e_i \leq e_j \in E'$.

		\begin{center}
			\begin{tikzcd}
				\mathrm{End}(Re_j)^{\mathrm{op}} \arrow[r, "f \mapsto f(e_j)"] & e_jRe_j \\
				\mathrm{End}(Re_i)^{\mathrm{op}} \arrow[r, "f \mapsto f(e_i)", swap] \arrow[u, "f \mapsto \varphi_{ij} \circ f \circ \psi_{ji}"] & e_i R e_i \arrow[u, hook, "h_{ij}", swap] \\
			\end{tikzcd}
		\end{center}
		
		Let $f \in \mathrm{End}(Re_i)^{\mathrm{op}}$. We have that $(\varphi_{ij} \circ f \circ \psi_{ji})(e_j) = \varphi_{ij}\circ f (e_i) = f(e_i) \in e_j R e_j$. In the other direction, we have that $f \mapsto f(e_i)$ which is then injected into $e_j R e_j$. Hence, in both cases, we get that the image of $f$ is the image of $f(e_i)$ in $e_j R e_j$. Proving the reverse direction also commutes is very similar, so we omit it. 
		
		This proves that the directed limits are isomorphic. Hence, applying Theorem~\ref{arbdirectlimit} \[\varinjlim\limits_{e_i \in E'} \mathrm{End}_R(Re_i)^{\mathrm{op}} \cong \varinjlim\limits_{e_i \in E'} e_i R e_i \cong  \bigcup_{e_i \in E'} e_iRe_i\] as rings with local units.
		
	\end{proof}
	
	With the notation now set up, we now step through the proofs in \cite{abrams2008leavitt} and \cite{Firrisa2020MoritaEO} and derive some general conditions which guarantee that a subring $S \subseteq R$ satisfies $S \sim_M R$. Fix a set of local units $E$ of $R$ that is closed under meets and joins.
	
	\begin{condition} \label{localheredassum}
		The set $E \cap S$ is a set of local units for $S$.
	\end{condition}
	
	Note that $E \cap S$ is a subset of the local units $E$ that is closed under the join and meet operations because $S$ is a subring and the join and meet operations are just constructed by addition and multiplication. Hence, defining $E' \coloneqq E \cap S$, we have that $E'$ is a subset of $E$ that is closed under joins and meets.
	
	From the injection $S \hookrightarrow R$ and any $e \in E \cap S$, we can construct the injection $i_e: eSe \hookrightarrow eRe$. 
	
	\begin{condition} \label{isurjassum}
		For all $e \in E \cap S$, $i_e$ is surjective (and thus an isomorphism).
	\end{condition}
	
	Consider the module \[M = \varinjlim\limits_{e \in E \cap S} Re = \bigcup_{e \in E \cap S} Re\] which is a locally projective module with the compatible set $\{\{Re_i\}_{e_i \in E \cap S}, \{\varphi_{ij}\}_{e_i \leq e_j}, \{\psi_{ji}\}_{e_i \leq e_j}\}$ (from Lemma~\ref{constructcompat}) essentially by definition.
	
	Because $E \cap S$ is closed under the join and meet operations, we have that \[\varinjlim\limits_{e \in E \cap S} \left(\mathrm{End}_R Re \right)^{op} \cong \varinjlim\limits_{e \in E \cap S} eRe \cong \varinjlim\limits_{e \in E \cap S} eSe \cong S\] where the first isomorphism is by Theorem~\ref{homdirlimit}, the second isomorphism is by the assumption that $i_e$ is an isomorphism for all $e \in E \cap S$, and the third isomorphism is by Condition~\ref{localheredassum} and Theorem~\ref{arbdirectlimit}. This is the isomorphism required by Theorem~\ref{localequiv}.
	
	Thus, to apply Theorem~\ref{localequiv} it remains to derive a condition for when $M$ is a generator for $R$-mod. 
	
	\begin{condition} \label{generatorassum}
		For any $e \in E$, there exists some collection $\{e_i\}_{i \in I}$ where $e_i \in E \cap S$ and a surjective $R$-module homomorphism $\bigoplus_{i \in I} Re_i \rightarrow Re$.
	\end{condition}
	
	By Theorem~\ref{simpgenerator}, it suffices to show that, given Condition~\ref{generatorassum}, there is a surjective morphism from some direct sum of $M$ into $R$.
	
	We know that $R = \bigcup_{e \in E} Re$ by the definition of a local unit. Hence, if for every $e \in E$ there exists some indexing set $I$ and a surjective morphism \[\bigoplus\limits_{i \in I} M \rightarrow Re\] we can form the larger surjective morphism \[\bigoplus\limits_{(e, i) \colon e \in E, i \in I}M \rightarrow \bigoplus\limits_{e \in E} Re\] There is an obvious surjective morphism from $\bigoplus_{e \in E} Re \rightarrow \bigcup_{e \in E} Re = R$ by just adding, so we would be done.
	
	It thus suffices to show the existence of an indexing set $I$ and a surjective morphism $\bigoplus_{i \in I} M \rightarrow Re$ for any $e \in E$. To do this, note that, by Condition~\ref{generatorassum}, for any $e \in E$ there exists a set $I$ and a collection $\{e_i\}_{i \in I}$ with $e_i \in E \cap S$ such that there is a surjective homomorphism $\bigoplus_{i \in I} Re_i \rightarrow Re$. For $e_i \in E \cap S$, we have that $Me_i \cong (\bigcup_{e \in E \cap S} Re)e_i = Re_i$ so there is a surjective $R$-module morphism $M \rightarrow Re_i$. Hence, \[\bigoplus\limits_{i \in I} M \xrightarrow{\times \{e_i\}_{i \in I}}  \bigoplus\limits_{i \in I} Re_i \rightarrow Re\] is a surjective $R$-module morphism onto $Re$.
	
	Applying Theorem~\ref{localequiv}, we have the following theorem.
	
	\begin{theorem} \label{thm:moritaring}
		Let $S \subseteq R$ be a subring of a ring with local units. Let $E$ be a set of local units of $R$ that is closed under joins and meets. If the following holds, then $S \sim_M R$.
		
		\begin{enumerate}
			\item (Condition~\ref{localheredassum}) $E \cap S$ is a set of local units for $S$
			\item (Condition~\ref{isurjassum}) $eSe = eRe$ for all $e \in E \cap S$.
			\item (Condition~\ref{generatorassum}) For any $e \in E$, there exists some indexing set $I$ and a collection $\{e_i\}_{i \in I}$ with $e_i \in E \cap S$ and a surjective $R$-module homomorphism $\bigoplus_{i \in I} Re_i \rightarrow Re$.
		\end{enumerate}
	\end{theorem}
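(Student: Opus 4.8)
The plan is to assemble the pieces developed in this section and feed them into Theorem~\ref{localequiv}. Put $E' \coloneqq E \cap S$. By the remark following Condition~\ref{localheredassum}, $E'$ is a subset of $E$ that is closed under joins and meets, so every statement of this section stated for a subset of $E$ closed under joins and meets applies verbatim with this $E'$. First I would exhibit the candidate locally projective generator: by Lemma~\ref{constructcompat} the data $\{\{Re_i\}_{e_i\in E'},\{\varphi_{ij}\},\{\psi_{ji}\}\}$ is a compatible set of finitely generated projective $R$-modules, and by Lemma~\ref{arbdirectlimit} its direct limit is $M \coloneqq \bigcup_{e\in E'} Re$. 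Hence $M$ is a locally projective $R$-module in the sense of Definition~\ref{localprojmod}, with $M_\alpha = Re_\alpha$.

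Next I would identify the endomorphism limit with $S$. By Remark~\ref{endodirect} the compatible set yields a directed system $\langle \mathrm{End}_R(Re_i)^{\mathrm{op}}, \phi_{ij}\rangle$, and Theorem~\ref{homdirlimit} identifies its limit with $\varinjlim_{e\in E'} eRe$ (compatibly with the inclusion connecting maps $h_{ij}$). Condition~\ref{isurjassum} gives $eSe = eRe$ for every $e\in E'$; since the maps $h_{ij}$ are restrictions of the inclusion $S\hookrightarrow R$, these levelwise equalities assemble into an isomorphism of directed systems, so $\varinjlim_{e\in E'} eRe \cong \varinjlim_{e\in E'} eSe$. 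Finally Condition~\ref{localheredassum} says $E' = E\cap S$ is a set of local units for $S$, and it is closed under the join and meet operations of $S$, so Lemma~\ref{arbdirectlimit} applied inside $S$ gives $\varinjlim_{e\in E'} eSe \cong \bigcup_{e\in E'} eSe = S$. Chaining these isomorphisms yields $S \cong \varinjlim_\alpha (\mathrm{End}_R M_\alpha)^{\mathrm{op}}$, the algebra half of the hypothesis of Theorem~\ref{localequiv}.

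The remaining point, and the only one needing genuine care, is that $M$ is a generator of $R$-mod. By Theorem~\ref{simpgenerator} it suffices to produce a surjection from a direct sum of copies of $M$ onto $R$. Since $R = \bigcup_{e\in E} Re$, summing components gives an obvious surjection $\bigoplus_{e\in E} Re \twoheadrightarrow R$, so it is enough to surject some direct sum of copies of $M$ onto each $Re$ with $e\in E$. Fix such an $e$. Condition~\ref{generatorassum} supplies an index set $I$, elements $e_i\in E\cap S = E'$, and a surjection $\bigoplus_{i\in I} Re_i \twoheadrightarrow Re$. For each $e_i\in E'$ we have $M e_i = \bigl(\bigcup_{e\in E'} Re\bigr)e_i = Re_i$, so right multiplication by $e_i$ is a surjection $M\twoheadrightarrow Re_i$; taking the direct sum over $i\in I$ and composing with the Condition~\ref{generatorassum} map gives a surjection $\bigoplus_{i\in I} M \twoheadrightarrow Re$. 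Assembling these over $e\in E$ gives a surjection onto $\bigoplus_{e\in E} Re$ and hence onto $R$, so $M$ is a generator. Having verified both hypotheses of Theorem~\ref{localequiv} — $M$ is a locally projective generator of $R$-mod and $S\cong\varinjlim_\alpha(\mathrm{End}_R M_\alpha)^{\mathrm{op}}$ — we conclude $S\sim_M R$. I do not expect a serious obstacle here; the subtlety is purely bookkeeping, namely ensuring the levelwise isomorphisms from Condition~\ref{isurjassum} are compatible with the connecting maps so that they yield an isomorphism of directed systems, which holds because every map in sight is a restriction of $S\hookrightarrow R$.
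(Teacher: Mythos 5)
Your proposal is correct and follows essentially the same route as the paper: the same module $M=\bigcup_{e\in E\cap S}Re$, the same chain of isomorphisms $\varinjlim(\mathrm{End}_R Re)^{\mathrm{op}}\cong\varinjlim eRe\cong\varinjlim eSe\cong S$ via Theorem~\ref{homdirlimit}, Condition~\ref{isurjassum}, and Lemma~\ref{arbdirectlimit}, and the same generator argument through Theorem~\ref{simpgenerator} and Condition~\ref{generatorassum}. Nothing is missing.
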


	\section{Morita Equivalence of Partial Subactions on Generalized Boolean Algebras}\label{booleanaction}
	
	Throughout this section, let $(\mathcal B_1, \Phi_1) \subseteq (\mathcal B_2, \Phi_2)$ be partial subaction of a group $G$ on generalized Boolean algebras. For $g \in G$, we will use $\mathcal I_{1, g}, \phi_{1, g}$ and $\mathcal I_{2, g}, \phi_{2, g}$ to refer to the ideals and morphisms in the partial actions on $\mathcal B_1$ and $\mathcal B_2$ respectively. In this section, we develop conditions for when $(\mathcal B_1, \Phi_1) \subseteq (\mathcal B_2, \Phi_2)$ induces Morita equivalent subalgebras $\mathrm{Lc}(R, \mathcal B_1) \rtimes_{\Phi_1} G \subseteq \mathrm{Lc}(R, \mathcal B_2) \rtimes_{\Phi_2} G$ by deriving analogs of the conditions in Theorem~\ref{thm:moritaring}. 
	
	\begin{lemma}
	Let $E = \{U \delta_e\}_{U \in \mathcal B_2}$. Then, $E$ is a set of local units for $\mathrm{Lc}(R, \mathcal B_2) \rtimes_{\Phi_2} G$ closed under joins and meets. Furthermore, $E \cap \mathrm{Lc}(R, \mathcal B_1) \rtimes_{\Phi_1} G = \{U \delta_e\}_{U \in \mathcal B_1}$ and, taking this $E$, Condition~\ref{localheredassum} is satisfied for $\mathrm{Lc}(R, \mathcal B_1) \rtimes_{\Phi_1} G \subseteq \mathrm{Lc}(R, \mathcal B_2) \rtimes_{\Phi_2} G$.
	\end{lemma}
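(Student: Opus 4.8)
The plan is to establish the three assertions of the lemma in turn, reading the products of the generators $U\delta_g$ off \cite[Lemma~2.23]{zhang2025partialactionsgeneralizedboolean} and the relations among the $U\delta_e$ off \cite[Lemma~2.24]{zhang2025partialactionsgeneralizedboolean}; the third assertion will then be the first one applied to $(\mathcal B_1,\Phi_1)$ in place of $(\mathcal B_2,\Phi_2)$. The facts I will invoke are, for $U\in\mathcal B_2$ and $V\in\mathcal I_{2,g}$, the products $(U\delta_e)(V\delta_g)=(U\wedge V)\delta_g$ and $(V\delta_g)(U\delta_e)=\phi_{2,g}\bigl(\phi_{2,g^{-1}}(V)\wedge U\bigr)\delta_g$, the identity $U\delta_e+V\delta_e-(U\wedge V)\delta_e=(U\vee V)\delta_e$ in the degree-$e$ component $\mathrm{Lc}(R,\mathcal B_2)$, and the faithfulness facts that $W\mapsto W\delta_e$ is injective on $\mathcal B_2$ and that $rW\delta_e=0$ forces $r=0$ whenever $W\ne0$.

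For the first assertion I would argue as follows. Taking $V=U$, $g=e$ in the first product shows each $U\delta_e$ is idempotent, and commutativity of $\wedge$ shows any two members of $E$ commute. Given an arbitrary $x=\sum_{i=1}^{n}r_i\,V_i\delta_{g_i}$ with $V_i\in\mathcal I_{2,g_i}$, I would set $U=\bigvee_{i=1}^{n}\bigl(V_i\vee\phi_{2,g_i^{-1}}(V_i)\bigr)\in\mathcal B_2$; then $V_i\le U$ and $\phi_{2,g_i^{-1}}(V_i)\le U$, so the two products give $(U\delta_e)(V_i\delta_{g_i})=V_i\delta_{g_i}$ and $(V_i\delta_{g_i})(U\delta_e)=V_i\delta_{g_i}$, whence $(U\delta_e)\,x\,(U\delta_e)=x$ and $x\in(U\delta_e)\bigl(\mathrm{Lc}(R,\mathcal B_2)\rtimes_{\Phi_2}G\bigr)(U\delta_e)$. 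Closure of $E$ under the meet $(U\delta_e)(V\delta_e)=(U\wedge V)\delta_e$ and the join $(U\delta_e)+(V\delta_e)-(U\delta_e)(V\delta_e)=(U\vee V)\delta_e$ is precisely the two Boolean identities above. Granting the second assertion, Condition~\ref{localheredassum} is then immediate: $E\cap\bigl(\mathrm{Lc}(R,\mathcal B_1)\rtimes_{\Phi_1}G\bigr)=\{V\delta_e\}_{V\in\mathcal B_1}$, and this is a set of local units for $\mathrm{Lc}(R,\mathcal B_1)\rtimes_{\Phi_1}G$ by the first assertion applied to the partial action $(\mathcal B_1,\Phi_1)$ in its own right.

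So everything reduces to the equality $E\cap\bigl(\mathrm{Lc}(R,\mathcal B_1)\rtimes_{\Phi_1}G\bigr)=\{V\delta_e\}_{V\in\mathcal B_1}$, which I expect to be the main obstacle. The inclusion $\supseteq$ is clear; for $\subseteq$, suppose $U\in\mathcal B_2$ with $U\delta_e$ in the subalgebra. By Lemma~\ref{lemma:ident} that subalgebra is the $R$-span of $\{V\delta_g\}_{g\in G,\,V\in\mathcal I_{1,g}}$ and the ambient ring is $G$-graded, so the homogeneous element $U\delta_e$ lies in the $R$-span of $\{V\delta_e:V\in\mathcal I_{1,e}=\mathcal B_1\}$; write $U\delta_e=\sum_{i=1}^{n}r_i\,V_i\delta_e$ with $V_i\in\mathcal B_1$. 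Now I would refine: the $V_i$ generate a finite Boolean subalgebra of $\mathcal B_1$ with top element $V:=\bigvee_i V_i$, whose atoms $W_1,\dots,W_m\in\mathcal B_1$ are pairwise disjoint with $\bigvee_l W_l=V$, so that $U\delta_e=\sum_l s_l\,W_l\delta_e$ for suitable $s_l\in R$. Multiplying by $W_l\delta_e$ and using disjointness gives $(U\wedge W_l)\delta_e=s_l\,W_l\delta_e$; multiplying this again by $U\delta_e$ gives $(1-s_l)(U\wedge W_l)\delta_e=0$. By the faithfulness facts, for each $l$ either $U\wedge W_l=0$, or $s_l=1$ and then $(U\wedge W_l)\delta_e=W_l\delta_e$, i.e.\ $W_l\le U$. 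Since $U\delta_e=\sum_i r_i\,V_i\delta_e$ also forces $U\le V$, it follows that $U=U\wedge V=\bigvee_l(U\wedge W_l)=\bigvee_{l:\,W_l\le U}W_l$, and this lies in $\mathcal B_1$.

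The only genuinely delicate point, then, is this last reverse inclusion: one must convert the linear statement that $U\delta_e$ is an $R$-combination of the $V\delta_e$ with $V\in\mathcal B_1$ into the combinatorial statement $U\in\mathcal B_1$, and the leverage for this is exactly the refinement to the atoms of the finite Boolean algebra generated by the $V_i$ together with the faithfulness properties of $\mathrm{Lc}(R,-)$ supplied by \cite[Lemma~2.24]{zhang2025partialactionsgeneralizedboolean}. Everything else is a direct unwinding of the partial-skew-group-ring multiplication rules, and the third assertion costs nothing beyond the first two.
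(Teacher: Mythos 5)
Your proposal is correct, but it is substantially more self-contained than the paper's proof, which is essentially a citation: the paper invokes \cite[Theorem~2.28]{zhang2025partialactionsgeneralizedboolean} for the fact that $\{U\delta_e\}_{U\in\mathcal B_2}$ is a set of local units closed under joins and meets, and then appeals to Lemma~\ref{lemma:ident} to read off the intersection with the subalgebra, with no further argument. What you do differently is (i) re-derive the local-unit and closure properties directly from the multiplication rules $(U\delta_e)(V\delta_g)=(U\wedge V)\delta_g$ and $(V\delta_g)(U\delta_e)=\phi_{2,g}(\phi_{2,g^{-1}}(V)\wedge U)\delta_g$, using the dominating unit $U=\bigvee_i\bigl(V_i\vee\phi_{2,g_i^{-1}}(V_i)\bigr)$, and (ii) give a genuine proof of the inclusion $E\cap\bigl(\mathrm{Lc}(R,\mathcal B_1)\rtimes_{\Phi_1}G\bigr)\subseteq\{V\delta_e\}_{V\in\mathcal B_1}$ via the $G$-grading, refinement to the atoms of the finite Boolean subalgebra generated by the $V_i$, and the faithfulness of $W\mapsto W\delta_e$. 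This second point is a real addition: Lemma~\ref{lemma:ident} only identifies the subalgebra as an $R$-span, and converting ``$U\delta_e$ lies in that span'' into ``$U\in\mathcal B_1$'' is exactly the step the paper leaves implicit; your atom argument supplies it correctly (each atom $W_l$ satisfies $U\wedge W_l=0$ or $W_l\le U$, and $U\le\bigvee_iV_i$, so $U$ is a join of atoms and hence lies in $\mathcal B_1$). The only dependence you should flag is that your ``faithfulness facts'' ($W\mapsto W\delta_e$ injective and $rW\delta_e=0\Rightarrow r=0$ for $W\neq 0$) must indeed be available from the structure of $\mathrm{Lc}(R,\mathcal B_2)$ as locally constant functions; granting those, the argument is complete.
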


	\begin{proof}
		By \cite[Theorem~2.28]{zhang2025partialactionsgeneralizedboolean}, $E$ is a set the set of local units for $\mathrm{Lc}(R, \mathcal B_2) \rtimes_{\Phi_2} G$ closed under joins and meets. By the identification in Lemma~\ref{lemma:ident}, the intersection of $\{U \delta_e\}_{U \in \mathcal B_2}$ with $\mathrm{Lc}(R, \mathcal B_1) \rtimes_{\Phi_1} G$ is exactly $\{U \delta_e\}_{U \in \mathcal B_1}$ which again by \cite[Theorem~2.27]{zhang2025partialactionsgeneralizedboolean} is a set of local units for $\mathrm{Lc}(R, \mathcal B_1) \rtimes_{\Phi_1} G$ so Condition~\ref{localheredassum} is satisfied.
	\end{proof}

	From now on we fix this set of local units $\{U \delta_e\}_{U \in \mathcal B_2}$. We now establish our first new condition, which has no analogous condition in Section~\ref{moritaequivsect}. However, assuming it will allow us to simplify other conditions and its proof will generally have a different flavor in our applications.
	
	\begin{condition} \label{heredassump}
		$\mathcal B_1 \subseteq \mathcal B_2$ is an ideal.
	\end{condition}
	
	In some sense, this assumption can be viewed as an analog for the hereditary condition for $C^{\ast}$-subalgebras which also has applications to Morita equivalence as shown by Rieffel in \cite{rieffel1982morita}. Henceforth, we informally refer to this condition as the ideal condition.
	
	We now derive some analogs for Condition \ref{isurjassum}.	For ease, we repeat the condition below in our context.

	\begin{remark}\label{surjassump} Condition~\ref{isurjassum} is equivalent to:
		
		For all $U \in \mathcal B_1$, the injective map \[(U \delta_e) (\mathrm{Lc}(R, \mathcal B_1) \rtimes_{\Phi_1} G) (U \delta_e) \hookrightarrow (U \delta_e) (\mathrm{Lc}(R, \mathcal B_2) \rtimes_{\Phi_2} G)  (U \delta_e)\] is also surjective.
	\end{remark}
	
	\begin{lemma}\label{firstisurj}
		Condition~\ref{isurjassum} holds if for all $g \in G$ we have that $U \cap \phi_{2, g}(U \cap V) \in \mathcal I_{1, g}$ for all $U \in \mathcal B_1$ and $V \in \mathcal I_{2, g^{-1}}$.
	\end{lemma}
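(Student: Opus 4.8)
The plan is to verify the reformulation of Condition~\ref{isurjassum} recorded in Remark~\ref{surjassump}. Fix $U \in \mathcal B_1$; the goal is to show that every element of $(U\delta_e)\big(\mathrm{Lc}(R, \mathcal B_2) \rtimes_{\Phi_2} G\big)(U\delta_e)$ already lies in $(U\delta_e)\big(\mathrm{Lc}(R, \mathcal B_1) \rtimes_{\Phi_1} G\big)(U\delta_e)$. Since $\mathrm{Lc}(R, \mathcal B_2) \rtimes_{\Phi_2} G$ is $R$-spanned by the elements $W\delta_g$ with $g \in G$ and $W \in \mathcal I_{2,g}$, and the map $x \mapsto (U\delta_e)\,x\,(U\delta_e)$ is $R$-linear, it suffices to treat a single element $(U\delta_e)(W\delta_g)(U\delta_e)$.

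Next I would compute this element using the product rule of \cite[Lemma~2.23]{zhang2025partialactionsgeneralizedboolean}: one gets $(U\delta_e)(W\delta_g) = (U \wedge W)\delta_g$ and then $(U\delta_e)(W\delta_g)(U\delta_e) = Z\,\delta_g$, where $Z := \phi_{2,g}\big(\phi_{2,g^{-1}}(U \wedge W) \wedge U\big)$. Write $V := \phi_{2,g^{-1}}(U \wedge W)$, which lies in $\mathcal I_{2,g^{-1}}$. Since $\phi_{2,g}$ and $\phi_{2,g^{-1}}$ are mutually inverse order isomorphisms and $U \wedge W \in \mathcal I_{2,g}$, monotonicity gives $Z = \phi_{2,g}(V \wedge U) \le \phi_{2,g}(V) = U \wedge W \le U$. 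Consequently $Z = U \wedge Z = U \wedge \phi_{2,g}(U \wedge V)$, which is exactly an element of the form appearing in the hypothesis (with this $U \in \mathcal B_1$ and $V \in \mathcal I_{2,g^{-1}}$), so the assumption yields $Z \in \mathcal I_{1,g}$.

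Finally I would check that $Z\delta_g$ genuinely lies in $(U\delta_e)\big(\mathrm{Lc}(R, \mathcal B_1) \rtimes_{\Phi_1} G\big)(U\delta_e)$. Because $Z \in \mathcal I_{1,g}$, Lemma~\ref{lemma:ident} places $Z\delta_g$ inside $\mathrm{Lc}(R, \mathcal B_1) \rtimes_{\Phi_1} G$. Moreover, since $\Phi_1$ is a subaction of $\Phi_2$, the map $\phi_{1,g^{-1}}$ is the restriction of $\phi_{2,g^{-1}}$ to $\mathcal I_{1,g}$, so $\phi_{1,g^{-1}}(Z) = \phi_{2,g^{-1}}(Z) = V \wedge U \le U$; combining this with $Z \le U$, a direct computation with the $\Phi_1$-product gives $(U\delta_e)(Z\delta_g)(U\delta_e) = Z\delta_g$. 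Hence $Z\delta_g$ lies in the required corner, and surjectivity (equivalently, Condition~\ref{isurjassum}) follows.

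The only delicate point is the middle step: recognizing that the element $Z$ produced by the two-sided multiplication automatically satisfies $Z \le U$, which is precisely what lets the outer meet with $U$ in the hypothesis be absorbed so that the assumption can be applied with the particular choice $V = \phi_{2,g^{-1}}(U \wedge W)$. The first and last paragraphs are routine manipulations of the partial skew group ring multiplication together with the compatibility between $\Phi_1$ and $\Phi_2$ recorded in Lemma~\ref{lemma:ident}.
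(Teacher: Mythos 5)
Your proposal is correct and follows essentially the same route as the paper: compress the spanning elements $W\delta_g$ by $U\delta_e$ on both sides, identify the result as $U\wedge\phi_{2,g}(U\wedge V)$ for a suitable $V\in\mathcal I_{2,g^{-1}}$, and apply the hypothesis. Your more careful bookkeeping (substituting $V=\phi_{2,g^{-1}}(U\wedge W)$ and checking that $Z\delta_g$ is fixed by the compression) actually tightens the paper's terser computation, which elides exactly these two points.
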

	\begin{proof}
		Let $U \in \mathcal B_1$ be arbitrary.
		
		Because $U \delta_e$ is an idempotent, it suffices to prove the map
		\[(\mathrm{Lc}(R, \mathcal B_1) \rtimes_{\Phi_1} G)  \rightarrow (U \delta_e) (\mathrm{Lc}(R, \mathcal B_2) \rtimes_{\Phi_2} G)  (U \delta_e)\] is surjective. The set $\{V \delta_g\}_{g \in G, V \in \mathcal I_{2, g}}$ $R$-spans $\mathrm{Lc}(R, \mathcal B_2) \rtimes_{\Phi_2} G$, so to prove surjectivity we simply need to prove that the set $\{(U \delta_e)(V \delta_g)(U\delta_e)\}_{g \in G, V \in \mathcal I_{2, g}}$ is in the image. 
		
		Fixing arbitrary $g \in G$ and $V \in \mathcal I_{2, g^{-1}}$, we find that \[(U \delta_e)(V \delta_g)(U\delta_e) = (U \cap \phi_{2, g}(U \cap V)) \delta_g\] By Condition~\ref{isurjassum}, we have that $U \cap \phi_{2, g}(U \cap V) \in \mathcal I_{1, g}$, so we have that $(U \delta_e)(V \delta_g)(U\delta_e) \in \mathrm{Lc}(R, \mathcal B_1) \rtimes_{\Phi_1} G$, so we are done.
	\end{proof}
	
	Assuming the ideal condition is already proven, we will prove an easier condition.
	
	\begin{lemma}\label{secondisurj}
		Condition~\ref{isurjassum} holds if the ideal condition holds and for all $U \in \mathcal B_1$ and $V \in \mathcal I_{2, g^{-1}}$,  there exists some $Z \in \mathcal I_{1, g}$ such that $U \cap \phi_{2, g}(U \cap V) \leq Z$
	\end{lemma}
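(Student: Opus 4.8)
The plan is to reduce everything to Lemma~\ref{firstisurj}. That lemma already tells us that Condition~\ref{isurjassum} holds as soon as we know $U \cap \phi_{2, g}(U \cap V) \in \mathcal I_{1, g}$ for every $g \in G$, every $U \in \mathcal B_1$, and every $V \in \mathcal I_{2, g^{-1}}$. So the whole task is to upgrade the weaker hypothesis of the present lemma, namely that $U \cap \phi_{2, g}(U \cap V) \leq Z$ for some $Z \in \mathcal I_{1, g}$, to the membership statement $U \cap \phi_{2, g}(U \cap V) \in \mathcal I_{1, g}$.

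First I would fix $g \in G$, $U \in \mathcal B_1$ and $V \in \mathcal I_{2, g^{-1}}$, and write $W := U \cap \phi_{2, g}(U \cap V)$, observing that $U \cap V \in \mathcal I_{2, g^{-1}}$ (as $\mathcal I_{2, g^{-1}}$ is an ideal of $\mathcal B_2$) and hence $\phi_{2, g}(U \cap V) \in \mathcal I_{2, g} \subseteq \mathcal B_2$. Now I invoke the ideal condition: since $\mathcal B_1 \subseteq \mathcal B_2$ is an ideal and $U \in \mathcal B_1$, the meet of $U$ with any element of $\mathcal B_2$ lies back in $\mathcal B_1$; applying this to $\phi_{2, g}(U \cap V)$ gives $W \in \mathcal B_1$. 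Next I use the hypothesis to produce $Z \in \mathcal I_{1, g}$ with $W \leq Z$. Because $\mathcal I_{1, g}$ is by definition an ideal of $\mathcal B_1$, and we now have $W \in \mathcal B_1$ together with $Z \in \mathcal I_{1, g}$, the element $W = W \wedge Z$ lies in $\mathcal I_{1, g}$. This is precisely the hypothesis of Lemma~\ref{firstisurj}, so applying that lemma finishes the argument.

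The proof is essentially bookkeeping, using twice the elementary fact that an ideal is downward closed under meets with arbitrary ambient elements: once for $\mathcal B_1$ inside $\mathcal B_2$, and once for $\mathcal I_{1, g}$ inside $\mathcal B_1$. The only point that requires any care — and the closest thing to an obstacle — is making sure that $\phi_{2, g}(U \cap V)$ is being computed in $\mathcal B_2$ via $\phi_{2, g}$ (not via $\phi_{1, g}$), so that it is genuinely the ideal condition that delivers $W \in \mathcal B_1$; once that is recorded, no further estimates are needed and the reduction to Lemma~\ref{firstisurj} is immediate.
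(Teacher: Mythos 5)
Your proof is correct and follows essentially the same route as the paper: both reduce to Lemma~\ref{firstisurj} by upgrading $U \cap \phi_{2,g}(U \cap V) \leq Z$ to membership in $\mathcal I_{1,g}$ via downward closure of ideals. The only difference is that you spell out the two-step argument ($W \in \mathcal B_1$ first, then $W = W \wedge Z \in \mathcal I_{1,g}$) that the paper compresses into the claim that $\mathcal I_{1,g}$ is an ideal of $\mathcal B_2$, which is ``easy to show.''
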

	
	\begin{proof}
		If $\mathcal B_1 \subseteq \mathcal B_2$ is an ideal, it is easy to show that $\mathcal I_{1, g} \subseteq \mathcal B_2$ is also an ideal. Hence, if $U \cap \phi_{2, g}(U \cap V) \leq Z$ for some $Z \in \mathcal I_{1, g}$, then $U \cap \phi_{2, g}(U \cap V) \in \mathcal I_{1, g}$. This is the condition for Lemma~\ref{firstisurj}.
	\end{proof}
	
	Our sets usually have covers, and the proof often becomes easier when we restrict our attention to covers. Hence, we present our final condition using the language of covers.
	
	\begin{condition}\label{thirdisurj}
		Let $C_1 \subseteq \mathcal B_1$ be a cover of $\mathcal B_1$. For all $g \in G$, let $C_g \subseteq \mathcal I_{2, g}$ be a cover of $\mathcal I_{2, g}$. For all $g \in G$, $X, Y \in C_1$ and $V \in C_{g^{-1}}$, there exists some $Z \in \mathcal I_{1, g}$ such that $X \cap \phi_{2, g}(Y \cap V) \leq Z$.
	\end{condition}
	\begin{theorem}
		If Condition~\ref{heredassump} and Condition~\ref{thirdisurj} hold, then Condition~\ref{isurjassum} holds.
	\end{theorem}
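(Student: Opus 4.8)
The plan is to reduce the claim to Lemma~\ref{secondisurj}. Since Condition~\ref{heredassump} (the ideal condition) is among the hypotheses, it suffices to verify the remaining hypothesis of that lemma: that for every $U \in \mathcal B_1$, every $g \in G$, and every $V \in \mathcal I_{2, g^{-1}}$ there is some $Z \in \mathcal I_{1, g}$ with $U \cap \phi_{2, g}(U \cap V) \leq Z$. Condition~\ref{thirdisurj} supplies such a $Z$ only when the meet arguments come from the fixed covers $C_1$ and $C_{g^{-1}}$, so the work is to bridge from cover elements to arbitrary elements, which I would do by distributivity.

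Concretely, given $U$, $g$, and $V$ as above, I would first use the cover property to pick $X_1, \dots, X_n \in C_1$ with $U \leq U' \coloneqq \bigvee_i X_i$ and $V_1, \dots, V_m \in C_{g^{-1}}$ with $V \leq V' \coloneqq \bigvee_j V_j$. Since $C_{g^{-1}} \subseteq \mathcal I_{2, g^{-1}}$ and $\mathcal I_{2, g^{-1}}$ is an ideal, both $V'$ and $U' \cap V'$ (indeed each $X_i \cap V_j$) lie in $\mathcal I_{2, g^{-1}}$, so $\phi_{2, g}$ may be applied to them. Using that $\phi_{2, g}$ is order-preserving and join-preserving (being a generalized Boolean algebra isomorphism) together with the distributivity of $\mathcal B_2$, I would obtain
\[
\begin{aligned}
U \cap \phi_{2, g}(U \cap V) &\leq U' \cap \phi_{2, g}(U' \cap V') \\
&= \Big(\bigvee_k X_k\Big) \cap \phi_{2, g}\Big(\bigvee_{i, j} X_i \cap V_j\Big) \\
&= \bigvee_{k, i, j}\big(X_k \cap \phi_{2, g}(X_i \cap V_j)\big).
\end{aligned}
\]
Then, for each triple $(k, i, j)$, I would apply Condition~\ref{thirdisurj} with $(X, Y)$ taken to be $(X_k, X_i)$ and its cover element taken to be $V_j$, getting $Z_{kij} \in \mathcal I_{1, g}$ with $X_k \cap \phi_{2, g}(X_i \cap V_j) \leq Z_{kij}$. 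Setting $Z \coloneqq \bigvee_{k, i, j} Z_{kij}$, a finite join of elements of the ideal $\mathcal I_{1, g}$, gives $Z \in \mathcal I_{1, g}$, and the display then yields $U \cap \phi_{2, g}(U \cap V) \leq Z$, which is exactly the hypothesis of Lemma~\ref{secondisurj}. Invoking that lemma (with Condition~\ref{heredassump}) gives Condition~\ref{isurjassum}.

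I expect this to be essentially routine bookkeeping rather than to contain a genuine obstacle. The points that need care are minor: checking that $\phi_{2, g}$ is monotone and preserves finite joins (both immediate from it being a lattice isomorphism preserving the operations), and tracking that every meet and join that appears stays inside the relevant ideal, so that $\phi_{2, g}$ is defined where it is used and the final $Z$ genuinely lies in $\mathcal I_{1, g}$. The one conceptually load-bearing step is the distributive expansion in the display, which converts the cover-level hypothesis of Condition~\ref{thirdisurj} into the element-level hypothesis demanded by Lemma~\ref{secondisurj}.
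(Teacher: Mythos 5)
Your proposal is correct and follows essentially the same route as the paper: reduce to Lemma~\ref{secondisurj}, expand $U$ and $V$ over the covers, distribute $\phi_{2,g}$ and the meets over the finite joins, apply Condition~\ref{thirdisurj} termwise, and take the join of the resulting $Z_{kij}$ in the ideal $\mathcal I_{1,g}$. Your extra care in checking that each piece lies in $\mathcal I_{2,g^{-1}}$ before applying $\phi_{2,g}$, and that $V_j$ is drawn from $C_{g^{-1}}$ (the paper's proof has a typo writing $C_1$ there), is a welcome tightening but not a different argument.
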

	\begin{proof}		
		Let $U \in \mathcal B_1$ and $V \in \mathcal I_{2, g^{-1}}$ be arbitrary as in Lemma~\ref{secondisurj}. Because $C_1$ is a cover of $\mathcal B_1$, we have $U \subseteq \bigcup_{i=1}^n U_i$ for $U_i \in C_1$ and $V \subseteq \bigcup_{i=1}^m V_i$ for $V_i \in C_1$. We find that \[U \cap \phi_{2, g}(U \cap V) \leq \left(\bigcup_{i=1}^n U_i\right) \cap \phi_{2, g}(\left(\bigcup_{i=1}^n U_i\right) \cap \left( \bigcup_{i=1}^m V_i \right)) \] so it suffices to prove that there exists some $Z \in \mathcal I_{1, g}$ where the right-hand side is less than $Z$. 
		
		We calculate that \[\left(\bigcup_{i=1}^n U_i\right) \cap \phi_{2, g}(\left(\bigcup_{i=1}^n U_i\right) \cap \left( \bigcup_{i=1}^m V_i \right)) =  \bigcup_{i, j=1}^n  \bigcup_{k=1}^m (U_i \cap \phi_{2, g}(U_j \cap V_k))\] By the assumption, each of these terms is $\leq Z_{ijk}$ for some $Z_{ijk} \in \mathcal I_{1, g}$. Taking $Z \coloneqq \bigcup_{i, j = 1}^n\bigcup_{k=1}^m Z_{ijk} \in \mathcal I_{1, g}$, we have inequality required in Lemma~\ref{secondisurj}.
		
	\end{proof}
	
	We now derive analogs for Condition \ref{generatorassum}. Again, we repeat the condition in our current context.
	
	\begin{remark}\label{algebrageneratorassum} Condition~\ref{generatorassum} is equivalent to:
		
		For any $U \in \mathcal B_2$, there exists collection $\{U_i\}_{i \in I}$ with $U_i \in \mathcal B_1$ and a surjective $(\mathrm{Lc}(R, \mathcal B_2) \rtimes_{\Phi_2} G)$-module homomorphism \[\bigoplus_{i \in I} (\mathrm{Lc}(R, \mathcal B_2) \rtimes_{\Phi_2} G)(U_i \delta_e) \rightarrow (\mathrm{Lc}(R, \mathcal B_2) \rtimes_{\Phi_2} G)(U \delta_e)\]
	\end{remark}
	
	\begin{lemma} \label{covershifts}
		Condition~\ref{generatorassum} holds if the ideal condition holds and for all $U \in \mathcal B_2$, we have that $U \leq \bigcup_{i=1}^n \phi_{2, g_i}(V_i)$ for some $g_i \in G$ and $V_i \in \mathcal I_{2, g^{-1}}\cap \mathcal B_1$.
	\end{lemma}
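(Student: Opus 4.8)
The plan is to verify the module-theoretic reformulation of Condition~\ref{generatorassum} recorded in Remark~\ref{algebrageneratorassum}. Write $A_j \coloneqq \mathrm{Lc}(R,\mathcal B_j)\rtimes_{\Phi_j} G$ for $j=1,2$, so $A_1\subseteq A_2$ and $E\cap A_1 = \{U\delta_e : U\in\mathcal B_1\}$. Fix $U\in\mathcal B_2$; the hypothesis supplies a finite family $g_1,\dots,g_n\in G$ together with $V_i\in\mathcal I_{2,g_i^{-1}}\cap\mathcal B_1$ such that $U\le\bigvee_{i=1}^n\phi_{2,g_i}(V_i)$. I would take the $U_i$ demanded by Remark~\ref{algebrageneratorassum} to be these $V_i$, and construct an explicit surjection $\bigoplus_{i=1}^n A_2(V_i\delta_e)\to A_2(U\delta_e)$ given by right multiplication by suitable partial-isometry-type elements $t_i$, one in each summand.

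First I would set up the bookkeeping. Put $W_i\coloneqq U\wedge\phi_{2,g_i}(V_i)$: since $\phi_{2,g_i}(V_i)\in\mathcal I_{2,g_i}$ and each $\mathcal I_{2,g}$ is an ideal of $\mathcal B_2$, we have $W_i\in\mathcal I_{2,g_i}$, and distributivity gives $\bigvee_i W_i = U\wedge\bigvee_i\phi_{2,g_i}(V_i) = U$. Let $s_i\coloneqq\phi_{2,g_i^{-1}}(W_i)\in\mathcal I_{2,g_i^{-1}}$; since $\phi_{2,g_i^{-1}}$ is an order isomorphism onto $\mathcal I_{2,g_i^{-1}}$ and $W_i\le\phi_{2,g_i}(V_i)$, this forces $s_i\le V_i$. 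Set $t_i\coloneqq s_i\delta_{g_i^{-1}}\in A_2$. Using the partial skew group ring product formula of \cite[Lemma~2.23]{zhang2025partialactionsgeneralizedboolean}, one checks $(V_i\delta_e)\,t_i = (V_i\wedge s_i)\delta_{g_i^{-1}} = t_i$ (as $s_i\le V_i$) and $t_i\,(U\delta_e) = \phi_{2,g_i^{-1}}(\phi_{2,g_i}(s_i)\wedge U)\delta_{g_i^{-1}} = t_i$ (as $\phi_{2,g_i}(s_i)=W_i\le U$), so $t_i\in(V_i\delta_e)\,A_2\,(U\delta_e)$. Hence $\rho\colon\bigoplus_{i=1}^n A_2(V_i\delta_e)\to A_2(U\delta_e)$, $\rho\bigl((x_i)_i\bigr)=\sum_i x_i t_i$, is a well-defined $A_2$-module homomorphism with $\operatorname{im}\rho = \sum_{i=1}^n A_2 t_i$.

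The heart of the argument is surjectivity. The same product formula gives $(W_i\delta_{g_i})\,t_i = \phi_{2,g_i}(s_i)\delta_e = W_i\delta_e$, so $W_i\delta_e\in A_2 t_i\subseteq\operatorname{im}\rho$ for each $i$. Since $\bigvee_i W_i = U$, inclusion--exclusion inside the coefficient algebra (as in the calculations of \cite[Lemma~2.23]{zhang2025partialactionsgeneralizedboolean}) writes $U\delta_e = \sum_{\varnothing\ne T\subseteq\{1,\dots,n\}}(-1)^{|T|+1}\bigl(\bigwedge_{i\in T}W_i\bigr)\delta_e$; and for any fixed $j\in T$, $\bigl(\bigwedge_{i\in T}W_i\bigr)\delta_e = \bigl(\bigwedge_{i\in T}W_i\bigr)\delta_e\cdot(W_j\delta_e)\in A_2(W_j\delta_e)\subseteq A_2 t_j\subseteq\operatorname{im}\rho$. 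Therefore $U\delta_e\in\operatorname{im}\rho$, whence $A_2(U\delta_e)=A_2\cdot(U\delta_e)\subseteq\operatorname{im}\rho$; together with the reverse inclusion (each $t_i$ already lies in $A_2(U\delta_e)$) this makes $\rho$ onto. As every $V_i$ lies in $\mathcal B_1$, this is exactly what Remark~\ref{algebrageneratorassum} requires, so Condition~\ref{generatorassum} follows.

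The ideal condition (Condition~\ref{heredassump}) is available throughout, and I would invoke it wherever one needs an element dominated by a member of $\mathcal B_1$ to remain in $\mathcal B_1$. The main obstacle I anticipate is purely computational: verifying that in each of the products above the arguments genuinely lie in the ideals $\mathcal I_{2,g}$ where they are required (so that the product formula applies and the maps $\phi_{2,g}$ cancel as claimed), and recognizing that $U\delta_e$ cannot simply be read off as $\sum_i W_i\delta_e$ but must be recovered from the $W_i\delta_e$ through the inclusion--exclusion identity.
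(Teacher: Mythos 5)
Your proof is correct, and while it rests on the same basic mechanism as the paper's --- using the elements $V_i\delta_{g_i^{-1}}$ and $\phi_{2,g_i}(V_i)\delta_{g_i}$ as intertwiners between the cyclic modules $A_2(V_i\delta_e)$ and $A_2\bigl(\phi_{2,g_i}(V_i)\delta_e\bigr)$ --- it handles the cover differently. The paper first replaces $U$ by $\bigvee_i\phi_{2,g_i}(V_i)$ and then \emph{disjointifies}: it shrinks each $V_i$ to some $V_i'\le V_i$ so that the $\phi_{2,g_i}(V_i')$ are pairwise disjoint and $U\delta_e$ is literally the sum $\sum_i\phi_{2,g_i}(V_i')\delta_e$; the ideal condition is invoked precisely there, to guarantee $V_i'\in\mathcal B_1$ so that the summands $A_2(V_i'\delta_e)$ are still of the form required by Condition~\ref{generatorassum}. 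You instead keep the original $V_i$, cut down on the other side to $W_i=U\wedge\phi_{2,g_i}(V_i)$, and recover $U\delta_e$ from the (possibly overlapping) $W_i\delta_e$ by inclusion--exclusion, absorbing each intersection term into one of the cyclic modules $A_2 t_j$. A pleasant byproduct is that your argument never actually uses the ideal condition --- the hypothesis already places the $V_i$ in $\mathcal B_1$ and nothing in your construction needs to be pushed back into $\mathcal B_1$ --- so your version of the lemma holds without that assumption; the paper's stated hypotheses of course still cover you. Your computations with the product formula (that $t_i=(V_i\delta_e)t_i(U\delta_e)$, that $(W_i\delta_{g_i})t_i=W_i\delta_e$, and that each $\bigl(\bigwedge_{i\in T}W_i\bigr)\delta_e$ lies in $A_2(W_j\delta_e)\subseteq A_2t_j$) all check out.
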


	\begin{proof}
		Let $M \coloneqq \mathrm{Lc}(R, \mathcal B_2) \rtimes_{\Phi} G$. We first show that we can assume that \[U = \bigcup_{i=1}^n \phi_{2, g_i}(V_i)\] Let $U' \coloneqq \bigcup_{i=1}^n \phi_{2, g_i}(V_i)$ and $U' \delta_e$ it's associated local unit. Note that we have a surjective morphism $M(U'\delta_e) \rightarrow M(U\delta_e)$ by right multiplication by $U\delta_e$. Hence, if $U \lneq U'$, we would still be able to construct a surjective morphism to $M(U \delta_e)$. Thus, from now on we assume that $U = \bigcup_{i=1}^n \phi_{2, g_i}(V_i)$. 
		
		We now show that we can also assume that $\{\phi_{2, g_i}(V_i)\}_{i = 1}^n$ are pairwise disjoint.  Because $\phi_{2, g_i}$ is an isomorphism, there clearly exists a set $\{V'_i\}_{i=1}^n$ with $V'_i \in \mathcal I_{2, g^{-1}}$ where $\{\phi_{2, g_i}(V'_i)\}_{i=1}^n$ are pairwise disjoint and $\bigcup_{i=1}^n \phi_{2, g_i}(V_i) = \bigcup_{i=1}^n \phi_{2, g_i}(V'_i)$. Furthermore, it's easy to construct these sets such that $V'_i \subseteq V_i$. Thus, $V'_i \in \mathcal B_1$ as well because $\mathcal B_1 \subseteq \mathcal B_2$ is an ideal, so we find that $V'_i \in \mathcal I_{2, g^{-1}} \cap \mathcal B_1$. Hence, this is still a valid representation.
		
		Now let $x \coloneqq U \delta_e$ and $x_i \coloneqq \phi_{2, g_i}(V_i) \delta_e$ with the above conditions. We now construct a surjective homomorphism \[\bigoplus_{i=1}^n Mx_i \rightarrow Mx\] Because $U = \bigsqcup_{i=1}^n \phi_{2, g_i}(V_i)$, we have the equality $x = \sum_{i=1}^n x_i$. There is a morphism from $\bigoplus_{i=1}^n Mx_i \rightarrow Mx$ given by $(a_i)_{i=1}^n \mapsto \sum_{i=1}^n a_i$ because $x_ix = x_i$ and it is surjective because $\sum_{i=1}^n x_i = x$.
		
		Let $p_i \coloneqq V_i \delta_e$. We will show that $Mx_i \cong Mp_i$, which will suffice because $V_i \in \mathcal B_1$ and we would have a surjection \[\bigoplus_{i=1}^n Mp_i \rightarrow Mx\] To do this, we define $b_i \coloneqq \phi_{2, g_i}(V_i) \delta_{g_i}$ and $b_i^{\ast} \coloneqq V_i\delta_{g_i^{-1}}$. 
		
		We have the following calculations \begin{enumerate}
			\item $b_i b_i^{\ast} = \phi_{2, g_i}((\phi_{2, g_i^{-1}}(\phi_{2, g_i}(V_i)) \cap V_i)) \delta_e = \phi_{2, g_i}(V_i)\delta_e = x_i$
			\item $b_i p_i = \phi_{2, g}(\phi_{2, g_i^{-1}}(\phi_{2, g_i}(V_i)) \cap V_i) \delta_{g_i} = \phi_{2, g}(V_i) \delta_{g_i} = b_i$
						\item $b_i^{\ast}b_i = \phi_{2, g_i^{-1}}(\phi_{2, g_i}(V_i) \cap \phi_{2, g_i}(V_i)) \delta_e = V_i \delta_e = p_i$

		\end{enumerate}

		There is a morphism $Mx_i \xrightarrow{\times b_i} Mb_i$ and, using (2), we have that $b_i p_i = b_i$ so $Mb_i \subseteq Mp_i$ so this can be extended to a morphism \[Mx_i \xrightarrow{\times b_i} Mp_i\] Similarly, there is a map $Mp_i \xrightarrow{\times b^{\ast}_i} Mp_i b^{\ast}_i$. Using (1), we find that $p_i = b_i^{\ast}b_i$ so we can calculate $Mp_ib^{\ast}_i = M b_i^{\ast}b_i b_i^{\ast}$. Applying (3), we find that $M b_i^{\ast}b_i b_i^{\ast} = M b_i^{\ast} x_i \subseteq Mx_i$ which gives us a map
		
		\[Mp_i \xrightarrow{\times b_i^{\ast}} Mx_i\] We will show that these maps are inverses to each other, which suffices to prove that they are isomorphisms.
		
		By (1) and the fact that $x_i$ is idempotent, the sequence \[Mx_i \xrightarrow{\times b_i} Mp_i\xrightarrow{\times b_i^{\ast}} Mx_i\] is the identity.  By (3), and the fact that $p_i$ is idempotent, the sequence \[Mp_i\xrightarrow{\times b_i^{\ast}} Mx_i\xrightarrow{\times b_i} Mp_i\] is also the identity, so we are done
	\end{proof}
	
	Starting with the next lemma, we build up to a theorem that is simpler to check in practice.
	
	\begin{lemma} \label{twosided}
		If the ideal condition is satisfied and $\mathcal I_{1, g} = \mathcal B_1 \cap \mathcal I_{2, g}$ for all $g \in G$. Then the subalgebra $\mathrm{Lc}(R, \mathcal B_1) \rtimes_{\Phi_1} G \subseteq \mathrm{Lc}(R, \mathcal B_2)	\rtimes_{\Phi_2} G$ is also a two-sided ideal.
	\end{lemma}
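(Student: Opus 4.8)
The plan is to reduce to a computation on spanning elements and then chase which ideal the coefficient of each product lands in. By Lemma~\ref{lemma:ident}, $\mathrm{Lc}(R,\mathcal B_1)\rtimes_{\Phi_1} G$ is the $R$-span of $\{V\delta_g\}_{g\in G,\,V\in\mathcal I_{1,g}}$ under the identification $\mathcal I_{1,g}\subseteq\mathcal I_{2,g}$, while $\mathrm{Lc}(R,\mathcal B_2)\rtimes_{\Phi_2} G$ is the $R$-span of $\{W\delta_h\}_{h\in G,\,W\in\mathcal I_{2,h}}$. We already know from the Preliminaries that $\mathrm{Lc}(R,\mathcal B_1)\rtimes_{\Phi_1} G$ is an $R$-subalgebra, and $R$ acts by central scalars, so it is a two-sided ideal as soon as it absorbs left and right multiplication by the spanning elements $W\delta_h$; since multiplication is $R$-bilinear it suffices to show that $(W\delta_h)(V\delta_g)$ and $(V\delta_g)(W\delta_h)$ both lie in $\mathrm{Lc}(R,\mathcal B_1)\rtimes_{\Phi_1} G$ whenever $W\in\mathcal I_{2,h}$ and $V\in\mathcal I_{1,g}$.

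First I would record the product formula for spanning elements from \cite[Lemma~2.23]{zhang2025partialactionsgeneralizedboolean}, which has the form $(W\delta_h)(V\delta_g)=\phi_{2,h}\!\big(\phi_{2,h^{-1}}(W)\cap V\big)\,\delta_{hg}$ and, symmetrically, $(V\delta_g)(W\delta_h)=\phi_{2,g}\!\big(\phi_{2,g^{-1}}(V)\cap W\big)\,\delta_{gh}$; the whole content is then to show the indicated coefficients lie in $\mathcal I_{1,hg}$ and $\mathcal I_{1,gh}$ respectively. For the first product: the meet $\phi_{2,h^{-1}}(W)\cap V$ is $\leq V\in\mathcal I_{1,g}\subseteq\mathcal B_1$, so the ideal condition puts it in $\mathcal B_1$; it also lies in $\mathcal I_{2,h^{-1}}$ (since $\mathcal I_{2,h^{-1}}$ is an ideal of $\mathcal B_2$ containing $\phi_{2,h^{-1}}(W)$), whence by the hypothesis $\mathcal I_{1,h^{-1}}=\mathcal B_1\cap\mathcal I_{2,h^{-1}}$ it lies in $\mathcal I_{1,h^{-1}}$; and being $\leq V$ it also lies in $\mathcal I_{1,g}$. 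Because $(\mathcal B_1,\Phi_1)\subseteq(\mathcal B_2,\Phi_2)$ is a partial subaction, the commuting square in the definition of a morphism of partial actions (with $f$ the inclusion) says that $\phi_{2,h}$ restricts to $\phi_{1,h}$ on $\mathcal I_{1,h^{-1}}$; hence $\phi_{2,h}(\phi_{2,h^{-1}}(W)\cap V)=\phi_{1,h}(\phi_{2,h^{-1}}(W)\cap V)$, and since its argument lies in $\mathcal I_{1,h^{-1}}\cap\mathcal I_{1,g}$, axiom (2) of the partial action $\Phi_1$ puts this element in $\mathcal I_{1,h}\cap\mathcal I_{1,hg}\subseteq\mathcal I_{1,hg}$, as needed.

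The right-multiplication case is entirely parallel: since $V\in\mathcal I_{1,g}$, the subaction property gives $\phi_{2,g^{-1}}(V)=\phi_{1,g^{-1}}(V)\in\mathcal I_{1,g^{-1}}\subseteq\mathcal B_1$, so $\phi_{2,g^{-1}}(V)\cap W\leq\phi_{2,g^{-1}}(V)$ lies in $\mathcal I_{1,g^{-1}}$, and being $\leq W\in\mathcal I_{2,h}$ and in $\mathcal B_1$ it lies in $\mathcal B_1\cap\mathcal I_{2,h}=\mathcal I_{1,h}$; then $\phi_{2,g}$ agrees with $\phi_{1,g}$ there and axiom (2) of $\Phi_1$ lands the result in $\mathcal I_{1,g}\cap\mathcal I_{1,gh}\subseteq\mathcal I_{1,gh}$. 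Extending $R$-linearly then shows $R\cdot\big(\mathrm{Lc}(R,\mathcal B_1)\rtimes_{\Phi_1}G\big)$ and $\big(\mathrm{Lc}(R,\mathcal B_1)\rtimes_{\Phi_1}G\big)\cdot R$ are both contained in $\mathrm{Lc}(R,\mathcal B_1)\rtimes_{\Phi_1}G$. There is no conceptual difficulty in this argument; the only point requiring care — and hence the expected ``main obstacle'' — is systematically using that a partial subaction identifies $\phi_{2,g}|_{\mathcal I_{1,g^{-1}}}$ with $\phi_{1,g}$, together with the fact, already observed in the proof of Lemma~\ref{secondisurj}, that the ideal condition makes each $\mathcal I_{1,g}$ an ideal of $\mathcal B_2$, so that the membership bookkeeping above stays consistent.
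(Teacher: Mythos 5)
Your proof is correct and follows essentially the same strategy as the paper: compute products of spanning elements and use the ideal condition together with $\mathcal I_{1,g} = \mathcal B_1 \cap \mathcal I_{2,g}$ and the fact that $\phi_{2,g}$ restricts to $\phi_{1,g}$ on $\mathcal I_{1,g^{-1}}$ to place the resulting coefficient in the correct ideal of $\mathcal B_1$. The only difference is that the paper first reduces, via the local units $\{U\delta_e\}_{U\in\mathcal B_1}$ of the subalgebra, to multiplying these local units by spanning elements of the larger algebra, which shortens the product formulas and avoids the appeal to axiom (2) of the partial action; your direct computation with general $V\delta_g$, $V\in\mathcal I_{1,g}$, is equally valid.
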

	
	\begin{proof}
		Because $\mathrm{Lc}(R, \mathcal B_1)\rtimes_{\Phi_1} G$ has a set of local units $\{U \delta_e \}_{U \in \mathcal B_1}$, it suffices to prove that the local units are closed under left and right multiplication by elements in $\mathrm{Lc}(R, \mathcal B_2) \rtimes_{\Phi_2} G$. Because $\mathrm{Lc}(R, \mathcal B_2)	\rtimes_{\Phi_2} G$ is $R$-spanned by elements of the form $V \delta_g$ for $V \in \mathcal I_{2, g}$, we can restrict our attention to multiplication by these elements.
		
		Let $U \in \mathcal B_1$ and $g \in G$ with $V \in \mathcal I_{2, g}$ be arbitrary. We calculate that \[(V \delta_g)(U\delta_e) = \phi_{2, g}(\phi_{2, g^{-1}}(V) \cap U)\delta_{g}\] Because $U \in \mathcal B_1$ and $\mathcal B_1 \subseteq \mathcal B_2$ is an ideal, we know that $\phi_{2, g^{-1}}(V) \cap U \in \mathcal B_1$. Furthermore, $\phi_{2, g^{-1}}(V) \in \mathcal I_{2, g}$ and $\mathcal I_{1, g} = \mathcal I_{2, g} \cap \mathcal B_2$, so we deduce that $\phi_{2, g^{-1}}(V) \cap U \in \mathcal I_{1, g}$. Hence, we find that $\phi_{2, g}(\phi_{2, g^{-1}}(V) \cap U)\delta_g = \phi_{1, g}(\phi_{2, g^{-1}}(V) \cap U)\delta_g$ which is in $\mathrm{Lc}(R, \mathcal B_1) \rtimes_{\Phi_1} G$.

		For multiplication on the other side, we calculate that $(U \delta_e)(V \delta_g) = (U \cap V)\delta_g$. Because $\mathcal B_1 \subseteq \mathcal B_2$ is an ideal we have that $U \cap V \in \mathcal B_1$. Because $V \in \mathcal I_{2, g}$, we have that $U \cap V \in \mathcal I_{2, g}$ so $U \cap V \in \mathcal B_1 \cap \mathcal I_{2, g} = \mathcal I_{1, g}$, so $(U \cap V) \delta_g \in \mathrm{Lc}(R, \mathcal B_1) \rtimes_{\Phi_1} G$, and we are done.
		
	\end{proof}
	
	Our main goal is to construct an ideal of $\mathrm{Lc}(R, \mathcal B_2)	\rtimes_{\Phi_2} G$ that contains the subalgebra $\mathrm{Lc}(R, \mathcal B_1) \rtimes_{\Phi_1} G$. With the last lemma in mind, one obvious attempt at this would be to take the partial action on $\mathcal B_1$ with ideals $\mathcal B_1 \cap \mathcal I_{2, g}$ and actions induced from $\phi_{2, g}$. However, the issue with taking that particular partial action is that there may be some $U \in \mathcal B_1 \cap \mathcal I_{2, g^{-1}}$ such that $\phi_{2, g}(U) \in \mathcal B_2 \setminus \mathcal B_1$. To rectify this issue, we construct an intermediate generalized Boolean algebra $\mathcal B$ that essentially closes $\mathcal B_1$ on $\phi_{2, g}$ operations.
	
	\begin{lemma} \label{intermediatesystem}
		Let $\mathcal B_1 \subseteq \mathcal B_2$ be an ideal. Then, \[\mathcal B \coloneqq \left\{\bigcup_{i=1}^n \phi_{2, g_i}(V_i) \in \mathcal B_2  \colon n \geq 1 \text{ for some  } g_i \in G \text{ and }  V_i \in \mathcal B_1 \cap \mathcal I_{2, g_i^{-1}}\right\}\] is an intermediate generalized Boolean algebra  that sits between $\mathcal B_1 \subseteq \mathcal B_2$. Furthermore, $\mathcal B \subseteq \mathcal B_2$ is an ideal.
		
		The ideals $\mathcal I_g = \mathcal B \cap \mathcal I_{2, g}$ sitting between $\mathcal I_{1, g} \subseteq \mathcal I_{2, g}$ with valid induced morphisms $\phi_g: \mathcal I_{g^{-1}} \rightarrow \mathcal I_g$ as the restriction of $\phi_{2, g}$ on $\mathcal I_g$ form a partial action on $\mathcal B$ denoted as $\Phi$. This gives us a sequence of partial actions on generalized Boolean algebras $(\mathcal B_1, \Phi_1) \subseteq (\mathcal B, \Phi) \subseteq  (\mathcal B_2, \Phi_2)$.
	\end{lemma}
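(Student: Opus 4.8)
The plan is to prove the statements in the order: (i) $\mathcal B\subseteq\mathcal B_2$ is an ideal; (ii) $\mathcal B$ is a generalized Boolean algebra with $\mathcal B_1\subseteq\mathcal B\subseteq\mathcal B_2$; (iii) $\Phi$ is a partial action; (iv) the two inclusions are partial subactions. The workhorse is step (i), from which (ii) is almost free. The recurring device is: if $X\leq\phi_{2,g}(V)$ with $V\in\mathcal B_1\cap\mathcal I_{2,g^{-1}}$, then $X\in\mathcal I_{2,g}$ since $\mathcal I_{2,g}$ is an ideal, $\phi_{2,g^{-1}}(X)\leq\phi_{2,g^{-1}}(\phi_{2,g}(V))=V$ so $\phi_{2,g^{-1}}(X)\in\mathcal B_1\cap\mathcal I_{2,g^{-1}}$ (as $\mathcal B_1\subseteq\mathcal B_2$ is an ideal, hence downward closed), and $X=\phi_{2,g}(\phi_{2,g^{-1}}(X))$, so $X\in\mathcal B$.

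For (i), closure of $\mathcal B$ under $\vee$ is immediate from the definition. For the ideal property, given $a=\bigcup_{i=1}^n\phi_{2,g_i}(V_i)\in\mathcal B$ and $b\in\mathcal B_2$, distributivity of $\mathcal B_2$ gives $a\wedge b=\bigcup_{i=1}^n(\phi_{2,g_i}(V_i)\wedge b)$, and each summand lies in $\mathcal B$ by the device above (applied with $X=\phi_{2,g_i}(V_i)\wedge b$); hence $a\wedge b\in\mathcal B$. Thus $\mathcal B$ is an ideal of $\mathcal B_2$. For (ii), $\mathcal B$ is then a sublattice of $\mathcal B_2$ ($\wedge$-closure from the ideal property, $\vee$-closure is definitional), it is distributive as a sublattice of a distributive lattice, it contains $0=\phi_{2,e}(0)$, and it is relatively complemented because an ideal is downward closed: the relative complement in $\mathcal B_2$ of $a\leq b$ with $b\in\mathcal B$ is $\leq b$, hence lies in $\mathcal B$, and the identities $a\vee c=b$, $a\wedge c=0$ persist in $\mathcal B$. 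Finally $\mathcal B_1\subseteq\mathcal B$ (take all $g_i=e$, using $\mathcal I_{2,e}=\mathcal B_2$) and $\mathcal B\subseteq\mathcal B_2$ by construction.

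For (iii), set $\mathcal I_g=\mathcal B\cap\mathcal I_{2,g}$; this is an ideal of $\mathcal B$ (intersection with the ideal $\mathcal I_{2,g}$ of $\mathcal B_2$) and $\mathcal I_{1,g}\subseteq\mathcal I_g\subseteq\mathcal I_{2,g}$ by Lemma~\ref{lemma:ident}. The crux is that $\phi_{2,g}$ carries $\mathcal I_{g^{-1}}$ into $\mathcal B$: writing $U=\bigcup_{i}\phi_{2,g_i}(V_i)\in\mathcal B\cap\mathcal I_{2,g^{-1}}$, each $W_i:=\phi_{2,g_i}(V_i)$ lies in $\mathcal I_{2,g_i}\cap\mathcal I_{2,g^{-1}}$ (the second membership because $W_i\leq U$ and $\mathcal I_{2,g^{-1}}$ is an ideal), so axiom (2) gives $V_i=\phi_{2,g_i^{-1}}(W_i)\in\mathcal I_{2,g_i^{-1}}\cap\mathcal I_{2,(gg_i)^{-1}}$, whence axiom (3) yields $\phi_{2,g}(W_i)=\phi_{2,gg_i}(V_i)$; since $\phi_{2,g}$ preserves finite joins on $\mathcal I_{2,g^{-1}}$ and $V_i\in\mathcal B_1\cap\mathcal I_{2,(gg_i)^{-1}}$, we obtain $\phi_{2,g}(U)=\bigcup_i\phi_{2,gg_i}(V_i)\in\mathcal B$, hence in $\mathcal I_g$. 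By the symmetric argument $\phi_{2,g^{-1}}$ carries $\mathcal I_g$ into $\mathcal I_{g^{-1}}$, and as these restrictions are mutually inverse, $\phi_g:=\phi_{2,g}|_{\mathcal I_{g^{-1}}}$ is a generalized Boolean algebra isomorphism $\mathcal I_{g^{-1}}\to\mathcal I_g$. Axioms (1)--(3) for $\Phi$ then follow by restricting those of $\Phi_2$: (1) is immediate since $\mathcal I_e=\mathcal B$ and $\phi_e=\mathrm{id}$, (3) is a direct restriction, and for (2) the nontrivial inclusion $\mathcal I_s\cap\mathcal I_{st}\subseteq\phi_s(\mathcal I_{s^{-1}}\cap\mathcal I_t)$ follows since $\phi_{2,s^{-1}}$ sends $\mathcal B\cap\mathcal I_{2,s}$ into $\mathcal B$ and $\mathcal I_{2,s}\cap\mathcal I_{2,st}$ onto $\mathcal I_{2,s^{-1}}\cap\mathcal I_{2,t}$. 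For (iv), the set-theoretic inclusions $\mathcal B_1\hookrightarrow\mathcal B$ and $\mathcal B\hookrightarrow\mathcal B_2$ are injective generalized Boolean algebra morphisms carrying $\mathcal I_{1,g}$ into $\mathcal I_g$ and $\mathcal I_g$ into $\mathcal I_{2,g}$; the required squares commute because $\phi_g$ and $\phi_{1,g}$ are both restrictions of $\phi_{2,g}$ (the latter because $(\mathcal B_1,\Phi_1)\subseteq(\mathcal B_2,\Phi_2)$ already forces $\phi_{1,g}=\phi_{2,g}|_{\mathcal I_{1,g^{-1}}}$). Hence $(\mathcal B_1,\Phi_1)\subseteq(\mathcal B,\Phi)\subseteq(\mathcal B_2,\Phi_2)$. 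I expect the main obstacle to be exactly the claim that $\phi_{2,g}$ sends $\mathcal B$ back into $\mathcal B$ — that $\mathcal B$ is genuinely closed under the $\phi$-operations — which is the whole point of the construction and needs careful tracking of domains through axioms (2) and (3).
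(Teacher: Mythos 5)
Your proposal is correct and follows essentially the same route as the paper: show $\mathcal B\subseteq\mathcal B_2$ is an ideal via downward closure (your ``device'' of pulling $X\leq\phi_{2,g_i}(V_i)$ back through $\phi_{2,g_i^{-1}}$ is exactly the paper's construction of $V_i'$), then verify that $\phi_{2,g}$ restricts to $\mathcal I_{g^{-1}}\to\mathcal I_g$ by rewriting $\phi_{2,g}(\phi_{2,g_i}(V_i))=\phi_{2,gg_i}(V_i)$. Your write-up is in fact more careful than the paper's on the one delicate point — using axiom (2) to place $V_i$ in $\mathcal I_{2,g_i^{-1}}\cap\mathcal I_{2,(gg_i)^{-1}}$ before invoking axiom (3), and explicitly checking the partial-action axioms for $\Phi$ — which the paper leaves implicit.
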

	
	\begin{proof}
		We first want to prove that $\mathcal B$ is a generalized Boolean algebra that sits between $\mathcal B_1 \subseteq \mathcal B_2$. Note that $\mathcal B$ is obviously closed under unions, contains $\mathcal B_1$ (using $n = 1$ and $g_1 = e$) and is contained in $\mathcal B_2$. To prove that it is closed under relative complements and intersections, it suffices to prove that $\mathcal B \subseteq \mathcal B_2$ is an ideal. Fixing some set in $\mathcal B$, let $U \in \mathcal B_2$ be such that $U \leq \bigcup_{i=1}^n \phi_{2, g_i}(V_i)$ for $V_i \in \mathcal B_1 \cap \mathcal I_{2, g^{-1}}$. Taking intersection by $U$ on both sides, we find that \[U = \bigcup_{i=1}^n (\phi_{2, g_i}(V_i) \cap U)\] By similar reasoning to the proof of Lemma~\ref{covershifts} we find that there is some $V'_i \in \mathcal B_2$ such that $V'_i \leq V_i \in \mathcal B_1 \cap \mathcal I_{2, g^{-1}}$ and $\phi_{2, g_i}(V'_i) = \phi_{2, g_i}(V_i) \cap U$. Because $\mathcal B_1 \subseteq \mathcal B_2$ is an ideal, we have that $\mathcal B_1 \cap \mathcal I_{2, g^{-1}}$ is an ideal of $\mathcal B_2$ as well. Hence, $V'_i \in \mathcal B_1 \cap  \mathcal I_{2, g^{-1}}$ so, by definition, $U \in \mathcal B$.
		
		We now need to prove that $\phi_{2, g}$ is well-defined as a restriction to a map $\mathcal I_{g^{-1}} \rightarrow \mathcal I_g$. Consider an arbitrary element $\bigcup_{i=1}^n \phi_{2, g_i}(V_i) \in \mathcal I_{g^{-1}} = \mathcal B \cap \mathcal I_{2, g^{-1}}$. Note that this implies that $\phi_{2, g_i}(V_i) \in \mathcal I_{2, g^{-1}}$. Hence, we get that \[\phi_{2, g}\left(\bigcup_{i=1}^n \phi_{2, g_i}(V_i)\right) = \bigcup_{i=1}^n \phi_{2, g}(\phi_{2, g_i}(V_i)) = \bigcup_{i=1}^n \phi_{2, gg_i}(V_i)\] This is in $\mathcal B$ by definition and also in $\mathcal I_{2, g}$ trivially so the element is in $\mathcal B \cap \mathcal I_{2, g} = \mathcal I_g$.
		
		Because the morphism $\phi_g$ is the restriction of $\phi_{2, g}$ to $\mathcal I_{g^{-1}}$ and $\phi_{1, g}$ is the further restriction to $\mathcal I_{1, g^{-1}}$, we get a sequence of partial actions on generalized Boolean algebras. $(\mathcal B_1, \Phi_1) \subseteq (\mathcal B, \Phi) \subseteq  (\mathcal B_2, \Phi_2)$.
	\end{proof}
	
	\begin{condition} \label{twoidealcond}
	 The subalgebra $\mathrm{Lc}(R, \mathcal B_1) \rtimes_{\Phi_1} G \subseteq  \mathrm{Lc}(R, \mathcal B_2) \rtimes_{\Phi_2} G$ is contained in no proper two-sided ideal.
	\end{condition}
	\begin{theorem} \label{twosidedideal}
		If Condition~\ref{heredassump} and Condition~\ref{twoidealcond} hold, then Condition~\ref{generatorassum} holds.
	\end{theorem}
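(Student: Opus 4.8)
The strategy is to squeeze $\mathrm{Lc}(R,\mathcal B_1)\rtimes_{\Phi_1}G$ between itself and a two-sided ideal of $\mathrm{Lc}(R,\mathcal B_2)\rtimes_{\Phi_2}G$ manufactured from the intermediate partial action of Lemma~\ref{intermediatesystem}, use Condition~\ref{twoidealcond} to collapse that ideal onto the whole algebra, and then read off that the intermediate Boolean algebra must equal $\mathcal B_2$ --- which is exactly the hypothesis Lemma~\ref{covershifts} needs in order to yield Condition~\ref{generatorassum}.

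Step by step: since Condition~\ref{heredassump} holds, Lemma~\ref{intermediatesystem} produces the chain of partial subactions $(\mathcal B_1,\Phi_1)\subseteq(\mathcal B,\Phi)\subseteq(\mathcal B_2,\Phi_2)$ in which $\mathcal B\subseteq\mathcal B_2$ is an ideal and $\mathcal I_g=\mathcal B\cap\mathcal I_{2,g}$ for every $g\in G$. These are precisely the two hypotheses of Lemma~\ref{twosided}, read now for the pair $(\mathcal B,\Phi)\subseteq(\mathcal B_2,\Phi_2)$ in place of $(\mathcal B_1,\Phi_1)\subseteq(\mathcal B_2,\Phi_2)$ (its proof uses only those two facts, so it applies verbatim); hence $\mathrm{Lc}(R,\mathcal B)\rtimes_\Phi G$ is a two-sided ideal of $\mathrm{Lc}(R,\mathcal B_2)\rtimes_{\Phi_2}G$. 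On the other hand $(\mathcal B_1,\Phi_1)\subseteq(\mathcal B,\Phi)$ gives, by Lemma~\ref{lemma:ident}, the inclusion of $R$-algebras $\mathrm{Lc}(R,\mathcal B_1)\rtimes_{\Phi_1}G\subseteq\mathrm{Lc}(R,\mathcal B)\rtimes_\Phi G$. Thus $\mathrm{Lc}(R,\mathcal B)\rtimes_\Phi G$ is a two-sided ideal containing the subalgebra $\mathrm{Lc}(R,\mathcal B_1)\rtimes_{\Phi_1}G$, so by Condition~\ref{twoidealcond} it is not proper:
\[
\mathrm{Lc}(R,\mathcal B)\rtimes_\Phi G=\mathrm{Lc}(R,\mathcal B_2)\rtimes_{\Phi_2}G .
\]

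Next I would pass to degree-$e$ homogeneous components of this equality, using that the $G$-grading on the left is inherited from the right; by the explicit description in Lemma~\ref{lemma:ident} the degree-$e$ part of $\mathrm{Lc}(R,\mathcal B)\rtimes_\Phi G$ is the $R$-span of $\{U\delta_e\}_{U\in\mathcal B}$ (recall $\mathcal I_e=\mathcal B$), while that of $\mathrm{Lc}(R,\mathcal B_2)\rtimes_{\Phi_2}G$ is the $R$-span of $\{U\delta_e\}_{U\in\mathcal B_2}$. Since $\mathcal B\subseteq\mathcal B_2$, equality of these $R$-spans forces $\mathcal B=\mathcal B_2$. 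Hence every $U\in\mathcal B_2$ already lies in $\mathcal B$, i.e.\ $U=\bigcup_{i=1}^n\phi_{2,g_i}(V_i)$ for some $g_i\in G$ and $V_i\in\mathcal B_1\cap\mathcal I_{2,g_i^{-1}}$ --- which is exactly the hypothesis of Lemma~\ref{covershifts}; combined with Condition~\ref{heredassump} that lemma delivers Condition~\ref{generatorassum}, completing the argument.

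The one place I expect to have to be careful is the inference $\mathcal B=\mathcal B_2$ from the equality of algebras: it rests both on the identification of subalgebras in Lemma~\ref{lemma:ident} (applied to $(\mathcal B,\Phi)\subseteq(\mathcal B_2,\Phi_2)$) and on the fact that a sub-generalized-Boolean-algebra of $\mathcal B_2$ is recovered from the $R$-span of its characteristic idempotents $\{U\delta_e\}$ inside $\mathrm{Lc}(R,\mathcal B_2)$ (cf.\ \cite[Theorem~2.31]{zhang2025partialactionsgeneralizedboolean}). Everything else is a routine chaining of Lemmas~\ref{intermediatesystem}, \ref{twosided} and \ref{covershifts} with Condition~\ref{twoidealcond}.
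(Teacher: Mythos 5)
Your proposal is correct and is essentially the paper's own argument: both build the intermediate partial action $(\mathcal B,\Phi)$ from Lemma~\ref{intermediatesystem}, apply Lemma~\ref{twosided} to see that $\mathrm{Lc}(R,\mathcal B)\rtimes_{\Phi}G$ is a two-sided ideal containing $\mathrm{Lc}(R,\mathcal B_1)\rtimes_{\Phi_1}G$, and then invoke Lemma~\ref{covershifts}. The only difference is presentational --- the paper argues by contraposition (if Condition~\ref{generatorassum} fails then $\mathcal B\subsetneq\mathcal B_2$ yields a proper two-sided ideal, contradicting Condition~\ref{twoidealcond}), whereas you run the same implication directly, with the equality $\mathcal B=\mathcal B_2$ extracted from the degree-$e$ component.
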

	
	\begin{proof}
		Build the intermediate partial action $(\mathcal B, \Phi)$ from Definition~\ref{intermediatesystem} and realize the sequence of $R$-subalgebras \[\mathrm{Lc}(R, \mathcal B_1) \rtimes_{\Phi_1} G  \subseteq \mathrm{Lc}(\mathcal B, R) \rtimes_{\Phi} G \subseteq \mathrm{Lc}(R, \mathcal B_2) \rtimes_{\Phi_2} G\]
		
		Assume that Condition~\ref{generatorassum} does not hold. Then, by the contrapositive of Lemma~\ref{covershifts} and construction of $\mathcal B$, we have that $\mathcal B \subsetneq \mathcal B_2$ is a strict subset. This means that $\mathrm{Lc}(\mathcal B, R) \rtimes_{\Phi} G \subsetneq \mathrm{Lc}(R, \mathcal B_2) \rtimes_{\Phi_2} G$ is a proper subalgebra.
		
		By Lemma~\ref{twosided}, because $\mathcal I_g = \mathcal B \cap \mathcal I_{2, g}$ by construction, the proper subalgebra is also a proper two-sided ideal. Hence, the subalgebra $\mathrm{Lc}(R, \mathcal B_1) \rtimes_{\Phi_1} G$ is contained in the proper two-sided ideal $\mathrm{Lc}(\mathcal B, R) \rtimes_{\Phi} G$, so we are done.
	\end{proof} 
	
	\begin{remark}
		This condition is quite similar to the notion of a full $C^{\ast}$-subalgebra $B \subseteq A$, which is a condition that states that $B$ is not contained in any proper two-sided closed ideal of $A$. If $B \subseteq A$ is also hereditary, Rieffel proved in \cite{rieffel1982morita} that $B$ and $A$ are Morita equivalent as $C^{\ast}$-algebras.
	\end{remark}
	We now summarize our main result.
	\begin{theorem} \label{MoritaBoolean}
		Let $(\mathcal B_1, \Phi_1) \subseteq (\mathcal B_2, \Phi_2)$ be a partial subaction on generalized Boolean algebras. If the following holds, then $\mathrm{Lc}(R, \mathcal B_1) \rtimes_{\Phi_1} G \sim_M \mathrm{Lc}(R, \mathcal B_2) \rtimes_{\Phi_2} G$.
		
		\begin{enumerate}
			\item (Condition~\ref{heredassump}) $\mathcal B_1 \subseteq \mathcal B_2$ is an ideal
			\item (Condition~\ref{thirdisurj}) Fixing any covers $C_1 \subseteq \mathcal B_1$ and $C_g \subseteq \mathcal I_{2, g}$, for all $g \in G$, $X, Y \in C_1$, and $V \in C_g$, we have that $X \cap \phi_{2, g}(Y \cap V) \leq Z$ for some $Z \in \mathcal I_{1, g}$.
			\item (Condition~\ref{twoidealcond}) The subalgebra $\mathrm{Lc}(R, \mathcal B_1) \rtimes_{\Phi_1} G \subseteq \mathrm{Lc}(R, \mathcal B_2) \rtimes_{\Phi_2} G$ is contained in no proper two-sided ideal
		\end{enumerate}
	\end{theorem}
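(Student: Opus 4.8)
The plan is to deduce this directly from the ring-theoretic Theorem~\ref{thm:moritaring}, applied to the inclusion $\mathrm{Lc}(R, \mathcal B_1) \rtimes_{\Phi_1} G \subseteq \mathrm{Lc}(R, \mathcal B_2) \rtimes_{\Phi_2} G$ with the set of local units $E = \{U\delta_e\}_{U \in \mathcal B_2}$ fixed at the start of this section. The first lemma of the section already records that $E$ is closed under joins and meets, that $E \cap (\mathrm{Lc}(R, \mathcal B_1) \rtimes_{\Phi_1} G) = \{U\delta_e\}_{U \in \mathcal B_1}$, and that Condition~\ref{localheredassum} holds for this inclusion. Thus hypothesis (1) of Theorem~\ref{thm:moritaring} is free, and it remains to verify hypotheses (2) and (3), namely Conditions~\ref{isurjassum} and~\ref{generatorassum}.

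First I would obtain Condition~\ref{isurjassum}. Hypothesis (1) of the present theorem is exactly Condition~\ref{heredassump}, and hypothesis (2) is exactly Condition~\ref{thirdisurj}; feeding these two into the theorem stated immediately after Condition~\ref{thirdisurj} yields Condition~\ref{isurjassum}. Next I would obtain Condition~\ref{generatorassum}. Hypothesis (1) is again Condition~\ref{heredassump}, and hypothesis (3) is exactly Condition~\ref{twoidealcond}; feeding these into Theorem~\ref{twosidedideal} yields Condition~\ref{generatorassum}. Having verified all three hypotheses of Theorem~\ref{thm:moritaring} for our inclusion, that theorem gives $\mathrm{Lc}(R, \mathcal B_1) \rtimes_{\Phi_1} G \sim_M \mathrm{Lc}(R, \mathcal B_2) \rtimes_{\Phi_2} G$, which is the claim.

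There is essentially no obstacle at this level: the theorem is a repackaging of the section's lemmas, and the genuine content — reducing arbitrary $U \in \mathcal B_1$, $V \in \mathcal I_{2,g^{-1}}$ to finite unions of cover elements via Lemma~\ref{secondisurj} (for Condition~\ref{isurjassum}), and building the intermediate generalized Boolean algebra $\mathcal B$ of Lemma~\ref{intermediatesystem} together with the two-sided ideal argument of Lemma~\ref{twosided} (for Condition~\ref{generatorassum}) — has already been carried out in the proofs of the cited results. The only point worth double-checking in writing this up is that the covers $C_1 \subseteq \mathcal B_1$ and $\{C_g\}_{g \in G}$ may indeed be chosen arbitrarily, which is visible from the proof of the theorem following Condition~\ref{thirdisurj} (the argument there works for any choice of covers).
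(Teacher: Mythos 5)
Your proposal is correct and is exactly the argument the paper intends: Theorem~\ref{MoritaBoolean} is stated as a summary of the section, and its proof is precisely the assembly you describe — the opening lemma gives Condition~\ref{localheredassum}, the theorem following Condition~\ref{thirdisurj} gives Condition~\ref{isurjassum} from hypotheses (1) and (2), Theorem~\ref{twosidedideal} gives Condition~\ref{generatorassum} from hypotheses (1) and (3), and Theorem~\ref{thm:moritaring} concludes. No gaps.
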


\section{Morita Equivalence of Inverse Subsemigroups} \label{inversegroupaction}

Throughout this section, we employ notation and theorems found in \cite[Section~4]{zhang2025partialactionsgeneralizedboolean}. Let $S_1$ and $S_2$ be strongly $E^{\ast}$-unitary inverse semigroups such that $S_1 \subseteq_c S_2$. Let $G$ be a group and $\varphi: S_2^{\times} \rightarrow G$ be a map with the required properties in Definition~\ref{definition:stronglyunit}. Let $(\mathcal T_c(E_1), \Phi_1)$ and $(\mathcal T_c(E_2), \Phi_2)$ be the associated partial actions of $G$ to $(S_1, \varphi_{S_1^{\times}})$ and $(S_2, \varphi)$. Because $S_1 \subseteq_c S_2$, this lets us induce a partial sub-action $(\mathcal T_c(E_1), \Phi_1) \subseteq (\mathcal T_c(E_2), \Phi_2)$. We will use the conditions in Theorem~\ref{MoritaBoolean} to derive conditions for when $S_1 \subseteq_c S_2$ induces Morita equivalent $R$-subalgebras \[L_R(S_1) = \mathrm{Lc}(R, \mathcal T_c(E_1)) \rtimes_{\Phi_1} G \subseteq \mathrm{Lc}(R, \mathcal T_c(E_2)) \rtimes_{\Phi_2} G =  L_R(S_2)\] Importantly, our final conditions will be irrespective of the group $G$ and grading $\varphi: S_2^{\times} \rightarrow G$.

\begin{condition}\label{cond1inv}
	$E_1 \subseteq E_2$ is tight.
\end{condition}
\begin{theorem}\label{semigroupcovertight}
	If Condition~\ref{cond1inv} holds, then Condition~\ref{heredassump} holds.
\end{theorem}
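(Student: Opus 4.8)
The plan is to unwind the definitions on both sides; no genuine work is required beyond matching up the relevant inclusion. Recall that in the present setting $\mathcal B_1 = \mathcal T_c(E_1)$ and $\mathcal B_2 = \mathcal T_c(E_2)$, and that the inclusion $\mathcal B_1 \subseteq \mathcal B_2$ appearing in the partial subaction $(\mathcal T_c(E_1), \Phi_1) \subseteq (\mathcal T_c(E_2), \Phi_2)$ is, by construction, the natural inclusion of compact open sets induced by $E_1 \subseteq E_2$ preserving finite covers, i.e.\ the map $V^{E_1}_x \mapsto V^{E_2}_x$, which is exactly what $S_1 \subseteq_c S_2$ provides (as recalled in the preliminaries and in \cite{zhang2025partialactionsgeneralizedboolean}).

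First I would note that, since $S_1 \subseteq_c S_2$, the semilattice inclusion $E_1 \subseteq E_2$ preserves finite covers, so the phrase ``$E_1 \subseteq E_2$ is tight'' is meaningful. By the very definition of tightness, it asserts precisely that the induced inclusion $\mathcal T_c(E_1) \subseteq \mathcal T_c(E_2)$ is an ideal. Substituting $\mathcal B_1 = \mathcal T_c(E_1)$ and $\mathcal B_2 = \mathcal T_c(E_2)$, this is verbatim the statement that $\mathcal B_1 \subseteq \mathcal B_2$ is an ideal, which is Condition~\ref{heredassump}. Hence Condition~\ref{cond1inv} implies Condition~\ref{heredassump}.

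The only point demanding any care, and the closest thing here to an obstacle, is confirming that the injective generalized Boolean algebra morphism $\mathcal B_1 \hookrightarrow \mathcal B_2$ used to define the partial subaction coincides with the $V_x \mapsto V_x$ inclusion referenced in the definition of ``tight'', rather than some other embedding; but this is immediate from the construction of the induced partial subaction $(\mathcal T_c(E_1), \Phi_1) \subseteq (\mathcal T_c(E_2), \Phi_2)$ recalled above (and established in \cite{zhang2025partialactionsgeneralizedboolean}), so the argument is essentially a one-line dereference of definitions.
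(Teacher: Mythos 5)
Your proof is correct and matches the paper's argument exactly: both unwind the definition of tightness as the statement that the induced inclusion $\mathcal T_c(E_1) \subseteq \mathcal T_c(E_2)$ is an ideal, after noting (via the construction of the induced partial subaction) that this inclusion is the one appearing in Condition~\ref{heredassump}.
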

\begin{proof}
For $S_1 \subseteq_c S_2$, the identification of the partial subaction is given by the inclusion $\mathcal T_c(E_1) \subseteq \mathcal T_c(E_2)$ by \cite[Theorem~4.19]{zhang2025partialactionsgeneralizedboolean}. The definition of tight is exactly that $\mathcal T_c(E_1) \subseteq \mathcal T_c(E_2)$ is an ideal.
\end{proof}

We now derive an analog for Condition~\ref{thirdisurj}.

\begin{remark} Recall that by \cite[Lemma~3.6]{zhang2025partialactionsgeneralizedboolean} the set $C_1 = \{V_x\}_{x \in E_1}$ forms a cover for $\mathcal T_c(E_1)$ and $C_g = \{V_x\}_{x \in E_{2, g}}$ forms a cover for $\mathcal T_c(E_{2, g})$. With these covers in mind, Condition~\ref{thirdisurj} is equivalent to:
	
For all $g \in G$, $x, y \in E_1$, and $a \in E_{2, g^{-1}}$,  we have that $V_x \cap \phi_{2, g}(V_y \cap V_a) \leq Z$ for some $Z \in \mathcal T_c(E_{1, g})$.
\end{remark}

The next lemma gives a sufficient condition for Condition~\ref{thirdisurj}.

\begin{lemma}\label{firstsurj}
 Condition~\ref{thirdisurj} holds if for all $g \in G$, $x, y \in E_1$ and $s \in S_2$ that satisfy $\varphi(s) = g$, we have that $xs y s^{\ast} \leq z$ for some $z \in E_{1, g}$.
\end{lemma}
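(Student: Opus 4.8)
The plan is to unwind the claimed cover condition into the language of tight filters and then reduce to a computation with idempotents of $S_2$. Recall that for $P$ a semilattice, the compact open sets of $T(P)$ are exactly the $V_x$ for $x \in P$ (together with finite unions, but these suffice as a cover), and that the containment $V_x \cap V_y = V_{xy}$ holds since $\xi$ being a filter means $x, y \in \xi \Leftrightarrow xy \in \xi$. The action $\tilde\phi_{2,g}$ is generated by $V_a \mapsto V_{\phi_{2,g}(a)}$ where, for $a \in E_{2,g^{-1}}$ with $a \leq s^{\ast}s$ and $\varphi(s) = g$, we have $\phi_{2,g}(a) = sas^{\ast}$. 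So the first step is to fix $g \in G$, idempotents $x, y \in E_1$, and $a \in E_{2,g^{-1}}$, and rewrite $V_x \cap \tilde\phi_{2,g}(V_y \cap V_a)$.

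\medskip

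First I would compute $V_y \cap V_a = V_{ya}$, and since $ya \leq a \leq s^{\ast}s$ (after possibly shrinking $a$; note $ya \in E_{2,g^{-1}}$ because $E_{2,g^{-1}}$ is an ideal and $a$ lies in it), we get $\tilde\phi_{2,g}(V_{ya}) = V_{\phi_{2,g}(ya)} = V_{s(ya)s^{\ast}} = V_{sy a s^{\ast}}$ for any $s \in S_2$ with $\varphi(s) = g$ and $ya \leq s^{\ast}s$; such an $s$ exists precisely because $ya \leq a \in E_{2,g^{-1}}$, which by definition of $E_{2,g^{-1}}$ means $ya \leq tt^{\ast}$ for some $t$ with $\varphi(t) = g^{-1}$, and then $t^\ast$ works. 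Intersecting with $V_x$ gives $V_x \cap \tilde\phi_{2,g}(V_y \cap V_a) = V_{x \cdot sy a s^{\ast}} = V_{x s y a s^\ast}$ (using that the idempotents commute). Now the hypothesis of the lemma says $x s y s^{\ast} \leq z$ for some $z \in E_{1,g}$. Since idempotents commute and $a$ is idempotent, $x s y a s^{\ast} = (x s y s^{\ast}) \cdot (s a s^{\ast}) \leq x s y s^{\ast} \leq z$, hence $V_{x s y a s^{\ast}} \leq V_z \in \mathcal T_c(E_{1,g})$. Taking $Z = V_z$ then gives exactly the containment required by the remark's reformulation of Condition~\ref{thirdisurj}.

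\medskip

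The one subtlety — and the step I expect to need the most care — is the bookkeeping around choosing $s$ and shrinking $a$: the formula $\phi_{2,g}(a) = sas^{\ast}$ requires $a \leq s^{\ast}s$, and $a$ as given only satisfies $a \in E_{2,g^{-1}}$, i.e. $a \leq tt^{\ast}$ for \emph{some} $t$ with $\varphi(t) = g^{-1}$. One must check that replacing $a$ by $a \wedge tt^{\ast} = a$ (it already is below $tt^\ast$) and taking $s = t^{\ast}$, so $\varphi(s) = g$ and $s^{\ast}s = t t^{\ast} \geq a$, is legitimate, and that the resulting value $sas^{\ast}$ is independent of the choice of $s$ — this is exactly the well-definedness of $\phi_{2,g}$ recorded in the definition preceding the excerpt and in \cite[Lemma~4.15]{zhang2025partialactionsgeneralizedboolean}. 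Once this is in place, the inequality $x s y a s^{\ast} \leq x s y s^{\ast}$ is immediate from commutativity of idempotents, and monotonicity of $x \mapsto V_x$ finishes it. I would also remark that the hypothesis is stated for all $s$ with $\varphi(s) = g$, so there is no loss in using whichever $s$ is convenient for the computation.
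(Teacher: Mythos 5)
Your proposal is correct and follows essentially the same route as the paper's proof: choose $s$ with $\varphi(s)=g$ and $a\leq s^{\ast}s$, compute $V_x\cap\tilde\phi_{2,g}(V_y\cap V_a)=V_{xsyas^{\ast}}\leq V_{xsys^{\ast}}\leq V_z$, and invoke the hypothesis to get $z\in E_{1,g}$. The only cosmetic difference is that you intersect with $V_x$ before comparing $syas^{\ast}$ with $sys^{\ast}$, whereas the paper compares first and then intersects; the well-definedness caveat you flag is handled identically in both.
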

\begin{proof}

Let $g \in G$, $x, y \in E_1$ and $a \in E_{2, g^{-1}}$ be arbitrary. There exists some $s \in S_2$ such that $\varphi(s) = g$ and $a \leq s^{\ast}s$. Note that $ya \leq s^{\ast}s$ as well, so we know that $ya \in E_{2, g^{-1}}$ and that $\phi_{2, g}(ya) = s ya s^{\ast} \leq sys^{\ast}$. Then, note that \[V_x \cap \phi_{2, g}(V_y \cap V_a) = V_x \cap \phi_{2, g}(V_{ya}) = V_x \cap V_{syas^{\ast}} \leq V_x \cap V_{sys^{\ast}} = V_{xsys^{\ast}}\] By the assumption, there is some $z \in E_{1, g}$ such that $xsys^{\ast} \leq z$ and hence $V_{xsys^{\ast}} \leq V_z \in \mathcal T_c(E_{2, g})$, so we are done.
\end{proof}
	
We now present a condition that is often easier to check and is included in the main theorem. A benefit of this corollary is that it makes no explicit mention of $g$, which makes casework simpler in application and removes the dependence on a specific grading $\varphi$. Recall that the order of arbitrary elements in an inverse semigroup $S$ is defined as $x \leq y \Leftrightarrow x = ey$ for some idempotent $e \in E$ and that this order is consistent with the order on $E$. Furthermore, it's easy to see that if $x \leq y$ then $xx^{\ast} \leq yy^{\ast}$.

\begin{condition} \label{cond2inv}
	For all $x, y \in E_1$ and $s \in S_2$ there exists $s' \in S_1$ such that $xsy \leq s'$.
\end{condition}

\begin{remark}
	Note the similarities between the above condition and \cite[Theorem~3.2.2]{Murphy1990-vw} which states that for a $B \subseteq A$ a $C^{\ast}$-subalgebra the condition that $B \subseteq A$ is hereditary is equivalent to $bab' \in A$ for all $b, b' \in B$ and $a \in A$.
\end{remark}
\begin{theorem} \label{bridgesurj}
If Condition~\ref{cond2inv} holds then Condition~\ref{thirdisurj} holds.
\end{theorem}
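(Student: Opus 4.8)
The plan is to reduce to the criterion of Lemma~\ref{firstsurj} and then feed in Condition~\ref{cond2inv}. By Lemma~\ref{firstsurj}, it suffices to show that for every $g \in G$, every $x, y \in E_1$, and every $s \in S_2$ with $\varphi(s) = g$, there is some $z \in E_{1,g}$ with $xsys^{\ast} \le z$. So I would fix such $g, x, y, s$, apply Condition~\ref{cond2inv} to $x, y \in E_1$ and $s \in S_2$ to get an element $s' \in S_1$ with $xsy \le s'$, and claim that $z := s's'^{\ast}$ works.

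The first step is the algebraic identity $xsys^{\ast} = (xsy)(xsy)^{\ast}$. Since idempotents of an inverse semigroup are self-adjoint, $(xsy)^{\ast} = ys^{\ast}x$, and collapsing $yy = y$ gives $(xsy)(xsy)^{\ast} = xsys^{\ast}x$. Both $x$ and $sys^{\ast}$ are idempotent — the latter because $s^{\ast}s$ commutes with the idempotent $y$, so that $sys^{\ast}sys^{\ast} = s(s^{\ast}s)ys^{\ast} = sys^{\ast}$ — hence they commute, and $xsys^{\ast}x = x(sys^{\ast})x = x(sys^{\ast}) = xsys^{\ast}$, proving the identity. The second step is the monotonicity of $a \mapsto aa^{\ast}$ recorded just before Condition~\ref{cond2inv}: from $xsy \le s'$ we obtain $(xsy)(xsy)^{\ast} \le s's'^{\ast}$, that is, $xsys^{\ast} \le z$.

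It remains to check $z \in E_{1,g}$, and here I would split on whether $xsy = 0$. If $xsy = 0$, then $xsys^{\ast} = 0$ and we may take $z = 0 \in E_{1,g}$. If $xsy \neq 0$, then $s' \neq 0$, and since $x$ and $y$ are then nonzero idempotents of $S_2$ we have $\varphi(x) = \varphi(y) = 1_G$; writing $xsy = es'$ for an idempotent $e$ (necessarily nonzero) and using multiplicativity of $\varphi$ on nonzero products, $\varphi(s') = \varphi(es') = \varphi(xsy) = \varphi(x)\varphi(s)\varphi(y) = g$. Thus $s' \in S_1$ satisfies $\varphi(s') = g$ and $z = s's'^{\ast} \le s's'^{\ast}$, so $z \in E_{1,g}$ by definition. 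Combined with the bound of the previous paragraph, this verifies the hypothesis of Lemma~\ref{firstsurj}, and the theorem follows.

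I do not expect a genuine obstacle: the content is the short identity $xsys^{\ast} = (xsy)(xsy)^{\ast}$ together with the bookkeeping needed to ensure every invocation of multiplicativity of $\varphi$ is applied to a nonzero product, which is exactly what the case split on $xsy = 0$ takes care of.
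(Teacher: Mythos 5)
Your proof is correct and follows essentially the same route as the paper: reduce to Lemma~\ref{firstsurj}, take $z = s'(s')^{\ast}$ from the $s'$ supplied by Condition~\ref{cond2inv}, verify $xsys^{\ast} = (xsy)(xsy)^{\ast}$ via commutativity of idempotents, and split on $xsy = 0$ to justify $\varphi(s') = g$. The only difference is that you spell out the idempotent manipulations and the nonzero-product bookkeeping for $\varphi$ in slightly more detail than the paper does.
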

\begin{proof}
We will show that the condition for Lemma~\ref{firstsurj} holds if Condition~\ref{cond2inv} holds.

Let $g \in G$, $x, y \in E_1$ and $s \in S_2$ with $\varphi(s) = g$ be as in Lemma~\ref{firstsurj}. By Condition~\ref{cond2inv}, there is some $s' \in S_1$ such that $xsy \leq s'$. Let $e \in E_2$ be the idempotent such that $xsy = es'$.

If $xsy = 0$ then clearly $(xsys^{\ast}) \leq 0 \in E_{1, g}$, so the condition in Lemma~\ref{firstsurj} holds. Otherwise, if $xsy \neq 0$ we calculate that \[\varphi(s') = \varphi(es') = \varphi(xsy) = \varphi(s) = g\] Then, it's easy to see that \[(xsy)(xsy)^{\ast} = xsyys^{\ast}x = xsys^{\ast}\] where we use the commutativity of idempotents. Thus, we find that $xsys^{\ast} \leq s'(s')^{\ast}$. Because $\varphi(s')= g$ and $s' \in S_1$, we find that $s'(s')^{\ast} \in E_{1, g}$. Thus, we can take $z = s'(s')^{\ast}$ to satisfy the condition in Lemma~\ref{firstsurj}.
\end{proof}

We do not change Condition~\ref{twoidealcond} of Theorem~\ref{MoritaBoolean} in any way. This condition is often easier to verify anyway, so we refrain from adding extra complication and instead keep the parallel with the $C^{\ast}$-algebra case.

\begin{condition}\label{cond3inv}
	$L_R(S_1) \subseteq L_R(S_2)$ is contained in no proper two-sided ideal.
\end{condition}

\begin{remark}
	A priori, the subalgebra identification $L_R(S_1) \subseteq L_R(S_2)$ depends on the chosen grading $\varphi: S_2^{\times} \rightarrow G$. However, it's not hard to see using that up to the isomorphism in \cite[Theorem~4.10]{zhang2025partialactionsgeneralizedboolean} the identification in \cite[Theorem~4.19]{zhang2025partialactionsgeneralizedboolean} is irrespective of $\varphi$. Furthermore, in our applications, one could always fix a grading and compute that $L_R(S_1) \subseteq L_R(S_2)$ is contained in no proper two-sided ideal and the proof would procede as normal, so this is only a minor technical point.
\end{remark}

We now summarize our results in the following theorems. Throughout, let $S_1$ and $S_2$ be strongly $E^{\ast}$-unitary inverse semigroups.

\begin{theorem}\label{finalmorita1}

If the following are true, then $L_R(S_1) \sim_M L_R(S_2)$.

\begin{enumerate}
	
	\item $S_1 \subseteq_c S_2$
	\item (Condition~\ref{cond1inv}) $E_1 \subseteq E_2$ is tight
	\item (Condition~\ref{cond2inv})	For all $x, y \in E_1$ and $s \in S_2$ there exists $s' \in S_1$ such that $xsy \leq s'$
	\item (Condition~\ref{cond3inv}) $L_R(S_1) \subseteq L_R(S_2)$ is contained in no proper two-sided ideal
\end{enumerate}
\end{theorem}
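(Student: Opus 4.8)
The plan is to deduce the theorem directly from Theorem~\ref{MoritaBoolean}, applied to the partial subaction $(\mathcal T_c(E_1), \Phi_1) \subseteq (\mathcal T_c(E_2), \Phi_2)$ induced by $S_1 \subseteq_c S_2$. To set this up I would first fix a group $G$ and a grading $\varphi \colon S_2^{\times} \to G$ witnessing that $S_2$ is strongly $E^{\ast}$-unitary. By the discussion in the preamble to this section, hypothesis~(1) ensures that $\varphi$ restricts to a grading $\varphi_{\mid S_1^{\times}}$ on $S_1$, that the induced partial actions satisfy $(\mathcal T_c(E_1), \Phi_1) \subseteq (\mathcal T_c(E_2), \Phi_2)$, and that under this identification $L_R(S_1) \subseteq L_R(S_2)$ is exactly $\mathrm{Lc}(R, \mathcal T_c(E_1)) \rtimes_{\Phi_1} G \subseteq \mathrm{Lc}(R, \mathcal T_c(E_2)) \rtimes_{\Phi_2} G$. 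Since $L_R(S_i)$ is independent of the choice of grading, fixing $\varphi$ costs nothing.

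Next I would verify, one at a time, the three hypotheses of Theorem~\ref{MoritaBoolean} for this subaction. Condition~\ref{heredassump} (the ideal condition) is handed to us by Theorem~\ref{semigroupcovertight} using hypothesis~(2), that $E_1 \subseteq E_2$ is tight. Condition~\ref{thirdisurj} follows from Theorem~\ref{bridgesurj}, whose hypothesis is precisely hypothesis~(3): applying it with respect to the canonical covers $C_1 = \{V_x\}_{x \in E_1}$ of $\mathcal T_c(E_1)$ and $C_g = \{V_x\}_{x \in E_{2,g}}$ of $\mathcal T_c(E_{2,g})$ gives the required inequality $V_x \cap \phi_{2,g}(V_y \cap V_a) \le Z$ for suitable $Z \in \mathcal T_c(E_{1,g})$. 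Condition~\ref{twoidealcond} is, verbatim under the identification above, hypothesis~(4). With all three conditions in hand, Theorem~\ref{MoritaBoolean} yields $\mathrm{Lc}(R, \mathcal T_c(E_1)) \rtimes_{\Phi_1} G \sim_M \mathrm{Lc}(R, \mathcal T_c(E_2)) \rtimes_{\Phi_2} G$, i.e. $L_R(S_1) \sim_M L_R(S_2)$.

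I do not expect any genuine obstacle here: all the real work sits in Theorems~\ref{semigroupcovertight}, \ref{bridgesurj}, and \ref{MoritaBoolean}, so the proof is a matter of chaining these together. The only point that needs a word of care is that hypothesis~(4) refers to the inclusion $L_R(S_1) \subseteq L_R(S_2)$ without reference to a grading, whereas Theorem~\ref{MoritaBoolean} is phrased in terms of a fixed partial subaction; one must note that the identification $L_R(S_1) \subseteq L_R(S_2)$ is the same regardless of which $G$ and $\varphi$ are chosen, which is exactly the content of the remark following Condition~\ref{cond3inv}. Granting that, the fixed-grading argument above applies verbatim, and the theorem follows.
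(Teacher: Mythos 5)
Your proposal is correct and is exactly the paper's argument: the paper proves Theorem~\ref{finalmorita1} by combining Theorem~\ref{MoritaBoolean} with Theorems~\ref{semigroupcovertight} and~\ref{bridgesurj} and taking Condition~\ref{cond3inv} as Condition~\ref{twoidealcond} verbatim. Your added remark on grading-independence matches the paper's own remark following Condition~\ref{cond3inv}, so nothing is missing.
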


In some applications, we can prove something stronger about $E_1 \subseteq E_2$, which we record in the following theorem.

\begin{theorem} \label{finalmorita2}
If the following are true, then $L_R(S_1)\sim _M L_R(S_2)$.
\begin{enumerate}
	\item $S_1 \subseteq S_2$
	\item $E_1 \subseteq E_2$ is closed downwards
	\item For all $x, y \in E_1$ and $s \in S_2$ there exists $s' \in S_1$ such that $xsy \leq s'$
	\item $L_R(S_1) \subseteq L_R(S_2)$ is contained in no proper two-sided ideal
\end{enumerate}
\end{theorem}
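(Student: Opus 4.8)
The plan is to deduce Theorem~\ref{finalmorita2} from Theorem~\ref{finalmorita1} by showing that its hypotheses are strictly stronger. Concretely, I would verify three reductions: that $S_1 \subseteq S_2$ together with $E_1 \subseteq E_2$ closed downwards implies $S_1 \subseteq_c S_2$; that closed downwards implies $E_1 \subseteq E_2$ is tight; and then conditions (3) and (4) transfer verbatim. Since (3) and (4) are literally Conditions~\ref{cond2inv} and~\ref{cond3inv}, the real content is in the first two reductions.

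For the first reduction, recall that $S_1 \subseteq_c S_2$ means $S_1 \subseteq S_2$ and $E_1 \subseteq E_2$ preserves finite covers. So I would take $x \in E_1$ and a finite cover $\{x_1, \ldots, x_n\}$ of $x$ in $E_1$ and check it is still a cover in $E_2$. Given $y \in E_2$ with $y \leq x$, I need some $i$ with $y \wedge x_i \neq 0$. Here is where closed downwards is used: since $y \leq x \in E_1$ and $E_1$ is downward closed in $E_2$, we get $y \in E_1$, so the cover property in $E_1$ immediately gives the desired $i$. This also shows $E_{1,g} = E_1 \cap E_{2,g}$ essentially for free, which is consistent with the ideal structure.

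For the second reduction, I would use the inclusion $\mathcal T_c(E_1) \subseteq \mathcal T_c(E_2)$ from \cite[Theorem~4.19]{zhang2025partialactionsgeneralizedboolean} (available once we know $S_1 \subseteq_c S_2$), and show it is an ideal, i.e.\ that $V^{E_2}_x \wedge V^{E_2}_y \in \mathcal T_c(E_1)$ whenever $x \in E_1$ and $y \in E_2$. Since $V^{E_2}_x \wedge V^{E_2}_y = V^{E_2}_{xy}$ and $xy \leq x \in E_1$, downward closure gives $xy \in E_1$, so $V^{E_2}_{xy} = V^{E_1}_{xy} \in \mathcal T_c(E_1)$ under the identification. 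More generally, an element of $\mathcal T_c(E_2)$ meeting an element $V^{E_2}_x$ with $x \in E_1$ is dominated by a finite union $\bigcup_i V^{E_2}_{a_i}$ with $a_i \leq x$, hence each $a_i \in E_1$; intersecting with $V_x^{E_1}$-type sets keeps us inside $\mathcal T_c(E_1)$, which is exactly the ideal condition, i.e.\ Condition~\ref{cond1inv}.

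With these two reductions in hand, all four hypotheses of Theorem~\ref{finalmorita1} hold, and the conclusion $L_R(S_1) \sim_M L_R(S_2)$ follows. I expect the main (though still modest) obstacle to be bookkeeping around the identifications: making sure the downward-closure argument correctly handles the compact-open sets $V_x$ and the fact that arbitrary elements of $\mathcal T_c(E_2)$ are finite joins of such sets, rather than single generators. The inverse-semigroup order being compatible with the idempotent order, and the remark that $x \leq y$ implies $xx^\ast \leq yy^\ast$, should make the remaining checks routine.
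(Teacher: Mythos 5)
Your proposal is correct and follows essentially the same route as the paper: reduce to Theorem~\ref{finalmorita1} by showing that hypotheses (1) and (2) imply $S_1 \subseteq_c S_2$ and Condition~\ref{cond1inv}, with (3) and (4) carrying over verbatim. The only difference is that the paper outsources the two implications (downward closure implies preservation of finite covers and tightness) to \cite[Corollary~3.24]{zhang2025partialactionsgeneralizedboolean}, whereas you prove them directly; your direct arguments are sound, modulo the bookkeeping you already flag about general elements of $\mathcal T_c(E_2)$ being finite joins of the generating sets $V_x$.
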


\begin{proof}
Theorem~\ref{finalmorita1} follows directly from Theorem~\ref{MoritaBoolean} and applying the theorems developed in this section. Theorem~\ref{finalmorita2} follows from applying \cite[Corollary~3.24]{zhang2025partialactionsgeneralizedboolean} to deduce that $E_1 \subseteq E_2$ downward directed implies that it preserves finite covers and is tight.
\end{proof}

\section{Enlargement of Inverse Semigroups} \label{enlargement}

In \cite{Lawson96}, Lawson introduced the notion of an enlargement of an inverse semigroup as a special case of a, at that time, yet to be explained Morita equivalence theory of inverse semigroups. Later, in \cite{SteinbergSemiMorita}, Steinberg defined a notion of Morita equivalence for inverse semigroups and proved that enlargement was a special case.

\begin{definition}
	An inverse semigroup $T$ is said to be an enlargement of an inverse subsemigroup $S$ if $STS = S$ and $TST = T$.
\end{definition}

Using results developed in this paper we will show that for an enlargement $S \subseteq T$ where $T$ is a strongly $E^{\ast}$-unitary inverse semigroup, we have that $L_R(S) \sim_M L_R(T)$. We emphasize that this could be done in much more generality and much easier by using a theorem of Steinberg in  \cite[Theorem~4.7 and Corollary~4.8]{SteinbergSemiMorita} that showed that Morita equivalent inverse semigroups $S$ and $T$ induce Morita equivalent tight groupoids $\mathcal G_{tight}(S)$ and $\mathcal G_{tight}(T)$. We would then be able to apply a result developed simultaneously by Clark and Sims in \cite{Clark2013EquivalentGH} and Steinberg in \cite{Steinberg14Modules} to show that the resulting Steinberg algebras $A_R(\mathcal G_{tight}(S))$ and $A_R(\mathcal G_{tight}(T))$ are Morita equivalent. Because our algebras $L_R(S)$ and $L_R(T)$ are isomorphic to these Steinberg algebras by \cite[Theorem~3.14]{zhang2025partialactionsgeneralizedboolean}, we would be done. However, in many of our applications, our inverse semigroups $S_1 \subseteq S_2$ are not just enlargements, so the discussion in this section is a special case of our results. 

\begin{theorem}
	If $S \subseteq T$ is an enlargement and $T$ is strongly $E^{\ast}$-unitary, then $L_R(S) \sim_M L_R(T)$.
\end{theorem}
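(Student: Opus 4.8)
The plan is to verify the three hypotheses of Theorem~\ref{finalmorita1} (or better, Theorem~\ref{finalmorita2}) for the inclusion $S \subseteq T$, where the defining enlargement relations $STS = S$ and $TST = T$ will do all the work. First I would observe that an enlargement inclusion $S \subseteq T$ is in particular an inclusion of inverse semigroups, and I would check that $E_1 = E(S) \subseteq E(T) = E_2$ is downward closed: if $e \in E(S)$ and $f \in E(T)$ with $f \leq e$, then $f = fe = fee \in T E(S) T \cap \ldots$; more carefully, $f = ef e \in STS = S$ since $e \in S$ and $f \in T$, and $f$ is idempotent, so $f \in E(S)$. This gives hypothesis (2) of Theorem~\ref{finalmorita2}, which by the last sentence of Section~\ref{inversegroupaction} (citing \cite[Corollary~3.24]{zhang2025partialactionsgeneralizedboolean}) implies both that $E_1 \subseteq E_2$ preserves finite covers, hence $S_1 \subseteq_c S_2$, and that it is tight. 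Here one should be slightly careful that $S$ being an inverse subsemigroup of the strongly $E^{\ast}$-unitary semigroup $T$ is automatically strongly $E^{\ast}$-unitary via the restricted grading, which is remarked in Definition~\ref{definition:stronglyunit} and the subsequent discussion.

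Next I would check Condition~\ref{cond2inv}: for all $x, y \in E(S)$ and $s \in T$, there exists $s' \in S$ with $xsy \leq s'$. In fact $xsy \in E(S)\, T\, E(S) \subseteq STS = S$ directly, so one can simply take $s' = xsy$ itself (using $xsy \leq xsy$), which trivially satisfies the condition — no genuine enlargement of the element is needed. So Condition~\ref{cond2inv} is immediate from $STS = S$. This is the cleanest of the three verifications.

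The remaining hypothesis is Condition~\ref{cond3inv}: that $L_R(S) \subseteq L_R(T)$ is contained in no proper two-sided ideal of $L_R(T)$, and I expect this to be the main obstacle, since it is the one genuinely algebraic (rather than purely order-theoretic) condition. The strategy would be to use the relation $TST = T$ to show that the ideal of $L_R(T)$ generated by $L_R(S)$ is all of $L_R(T)$. Concretely, $L_R(T)$ is $R$-spanned by elements $x\delta_g$ with $x \in E(T)_g$, i.e.\ $x \leq ss^\ast$ for some $s \in T$ with $\varphi(s) = g$; and $L_R(T)$ has local units $\{x\delta_e\}_{x \in \mathcal T_c(E_2)}$ coming from idempotents. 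It suffices to show every such local unit lies in the two-sided ideal generated by $L_R(S)$. Given an idempotent $f \in E(T)$, write (using $TST = T$) $f = f f f$ with a factorization through $S$: since $f \in T = TST$, we have $f = a b c$ with $a, c \in T$ and $b \in S$; replacing $b$ by $b' = a^\ast f c^\ast \cdot$ appropriately one can arrange $f = a b' c$ with $a, c \in T$ and $b' = a^\ast f (c^\ast)$, and $b' \in T S T \cap \ldots$ — the point being that one obtains an element of $S$ sandwiched by elements of $T$ realizing $f$. Passing to the algebra, the corresponding product $(a\delta_{\varphi(a)})(b'\delta_e \text{ or } b'\delta_{\varphi(b')})(c\delta_{\varphi(c)})$ recovers $f\delta_e$ (or at least something generating it as a one-sided multiple), exhibiting $f\delta_e$ in the ideal generated by $L_R(S)$. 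I would need to be careful translating the semigroup factorization into the partial-skew-group-ring language using the multiplication rules of \cite[Lemma~4.15]{zhang2025partialactionsgeneralizedboolean}, in particular checking the grading elements multiply correctly ($\varphi(a)\varphi(b')\varphi(c) = \varphi(f) = 1_G$ when the product is nonzero) and that the idempotent-support of the product is exactly $f$ rather than something strictly smaller; covering $f\delta_e$ by finitely many such products via the cover structure of $\mathcal T_c(E_2)$ may be needed. Once Condition~\ref{cond3inv} is established, Theorem~\ref{finalmorita2} applies directly and yields $L_R(S) \sim_M L_R(T)$.
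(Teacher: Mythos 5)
Your proposal is correct and follows essentially the same route as the paper: downward closure of $E(S)$ via $f = efe \in STS$, Condition~\ref{cond2inv} immediately from $xsy \in STS = S$, and Condition~\ref{cond3inv} by factoring an arbitrary element of $T$ through $S$ using $TST = T$ and multiplying the corresponding elements in the algebra to land the local units in the ideal generated by $L_R(S)$. The paper's verification of the last step is in fact shorter than you anticipate — it simply writes $x = x_1 x_2 x_3$ with $x_2 \in S$ and takes the product of the corresponding algebra elements — so the extra care you flag about gradings and covers, while not wrong to worry about, is not needed.
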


\begin{proof}
	We will prove each condition of Theorem~\ref{finalmorita2}. Throughout, fix some group $G$ and grading $\varphi: T^{\times} \rightarrow G$.
	
	\begin{enumerate}
		\item This is by definition.
		\item Let $x \in E(S)$ and let $x' \in E(T)$ be such that $x' \leq x$, then because $STS = S$ we have that $xx'x = x' \in S$ and clearly remains an idempotent.
		\item For $x, y \in E(S)$ and $s \in T$, because $STS = S$ we have that $xsy \in S$.
		\item We will show that for all $x \in E(T)$, we have that $x \delta_e$ is contained in any two-sided ideal containing $L_R(S)$. Because $x \delta_e$ generates a set of local units for $L_R(T)$, any two-sided ideal containing these elements must be all of $L_R(T)$, so this would suffice. 
		
		Because $TST = T$, for any $x \in T$ we have that there exists $x_1, x_3 \in T$ and $x_2 \in S$ such that $x_1x_2x_3 = x$. We then have that $(x_1 \delta_e)(x_2 \delta_e)(x_3\delta_e) = x \delta_e$ where $(x_2\delta_e) \in L_R(S)$. It's obvious that $(x_1 \delta_e)(x_2 \delta_e)(x_3\delta_e)$ is in any two-sided ideal containing $L_R(S)$, so we are done.
	\end{enumerate}
\end{proof}

\section{Applications to Leavitt Path Algebras} \label{Leavittapp}

See \cite[Section~5.1]{zhang2025partialactionsgeneralizedboolean} for a description of the inverse semigroup associated to a graph. Our main result of this section is the following.

\begin{theorem}\label{MoritaFinite}
Let $G = (G^0, G^1)$ be a graph and let $H^0$ be a hereditary subset of $G^0$ where $H^0$ is contained in no proper saturated hereditary subset of $G$. We let $H = (H^0, H^1)$ refer to the subgraph induced from the hereditary subset.

Then, $L_R(G)$ and $L_R(H)$ are Morita equivalent.
\end{theorem}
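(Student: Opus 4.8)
\emph{Approach.} The plan is to realize both sides as inverse-semigroup algebras and apply Theorem~\ref{finalmorita2}. Let $S_G$ be the graph inverse semigroup of $G$ as in \cite[Section~5.1]{zhang2025partialactionsgeneralizedboolean}: its nonzero elements are $\alpha\beta^{\ast}$ for finite paths $\alpha,\beta$ in $G$ with $r(\alpha)=r(\beta)$, its nonzero idempotents are the $\alpha\alpha^{\ast}$ (with the vertices $v=vv^{\ast}$ among them), and it is strongly $E^{\ast}$-unitary via $\alpha\beta^{\ast}\mapsto\alpha\beta^{-1}$ into the free group on $G^{1}$; by construction $L_R(G)=L_R(S_G)$. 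Since $H^0$ is hereditary, a finite path lies in $H$ exactly when all of its vertices lie in $H^0$, and the graph inverse semigroup $S_H$ of the induced subgraph $H$ is a $0$-preserving inverse subsemigroup of $S_G$ with $L_R(H)=L_R(S_H)$. It therefore suffices to verify hypotheses (1)--(4) of Theorem~\ref{finalmorita2} for $S_1=S_H\subseteq S_2=S_G$; (1) is clear.

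\emph{Conditions (2) and (3).} The geometric core is the single remark that if $\alpha$ is a path in $H$ and $\gamma$ is a path in $G$ issuing from a vertex of $\alpha$ (in particular from $r(\alpha)\in H^0$), then by hereditariness every vertex of $\gamma$, hence $\gamma$ itself, lies in $H$. From this I would prove the absorption identity $S_H S_G S_H = S_H$. Given $\alpha\beta^{\ast},\gamma\delta^{\ast}\in S_H$ and $\mu\nu^{\ast}\in S_G$, the factors $\beta^{\ast}\mu$ and $\nu^{\ast}\gamma$ are, when nonzero, each built from a terminal segment of $\beta$ or $\gamma$ or from a path issuing from $r(\beta)\in H^0$ or $r(\gamma)\in H^0$, hence lie in $S_H$ by the remark and closure of $S_H$ under $(-)^{\ast}$; since $\alpha,\delta^{\ast}\in S_H$ and $S_H$ is a subsemigroup, $(\alpha\beta^{\ast})(\mu\nu^{\ast})(\gamma\delta^{\ast})=\alpha\,(\beta^{\ast}\mu\nu^{\ast}\gamma)\,\delta^{\ast}\in S_H$. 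Absorption gives condition~(3) at once, with $s'=xsy$, and it gives condition~(2): if $q\in E(S_G)$ with $q\le p\in E(S_H)$ then $q=pqp\in S_H S_G S_H=S_H$, so $q\in E(S_H)$.

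\emph{Condition (4).} I would show $L_R(S_H)\subseteq L_R(S_G)$ lies in no proper two-sided ideal via the standard vertex/ideal dictionary. If $I$ is a two-sided ideal of $L_R(G)$ with $L_R(S_H)\subseteq I$, then $v\delta_e\in I$ for all $v\in H^0$, so $X:=\{v\in G^0:v\delta_e\in I\}\supseteq H^0$. The identity $r(e)=e^{\ast}\,s(e)\,e$ shows $X$ is hereditary, and the Cuntz--Krieger relation $v=\sum_{s(e)=v}ee^{\ast}$ at a regular vertex $v$ (combined with $ee^{\ast}=e\,r(e)\,e^{\ast}$) shows $X$ is saturated. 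By hypothesis the only saturated hereditary subset containing $H^0$ is $G^0$, so $X=G^0$; thus $I$ contains the family $\{v\delta_e:v\in G^0\}$, hence a set of local units of $L_R(G)$, so $I=L_R(G)$. With all of (1)--(4) verified, Theorem~\ref{finalmorita2} gives $L_R(H)=L_R(S_H)\sim_M L_R(S_G)=L_R(G)$.

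\emph{Main obstacle.} I expect the difficulty to be bookkeeping rather than conceptual: carefully going through the prefix/suffix cases that make the product computation behind $S_H S_G S_H=S_H$ work, and matching the notation of $L_R(S_G)$ (elements $x\delta_g$, local units $v\delta_e$) with the usual Leavitt path algebra generators $v,e,e^{\ast}$ so that the Cuntz--Krieger manipulations in condition~(4) are literally valid. Neither step is deep, but both must be spelled out.
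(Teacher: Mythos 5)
Your proposal is correct and follows essentially the same route as the paper: realize both algebras as inverse semigroup algebras and verify the four hypotheses of Theorem~\ref{finalmorita2}, with the same underlying geometric fact (hereditariness of $H^0$ forces any path of $G$ meeting a path of $H$ nontrivially to lie in $H$) doing all the work in conditions (2) and (3). The only cosmetic differences are that you package (2) and (3) through the absorption identity $S_H S_G S_H = S_H$ (echoing the enlargement argument of Section~\ref{enlargement}) where the paper checks them directly, and that for condition (4) you re-derive the hereditary/saturated properties of $\{v : v\delta_e \in I\}$ from the Cuntz--Krieger relations where the paper simply cites the classification of two-sided ideals from \cite[Chapter~2]{Abrams2017}.
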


\begin{proof}
Let $S_H$ and $S_G$ be the associated strongly $E^{\ast}$-unitary inverse semigroups of $H$ and $G$ respectively. We will invoke Theorem~\ref{finalmorita2}. 

\begin{enumerate}
	
	\item Because $H$ is a subgraph of $G$, we obviously have an obvious injection from $S_H \hookrightarrow S_G$ that takes $(\alpha, \beta) \mapsto (\alpha, \beta)$ because all paths in $H$ can be viewed as paths in $G$.
	
	\item Under the above injection and using the fact that $H$ is hereditary, it's easy to see that $E_1 \subseteq E_2$ is closed downwards.
	
	\item Let $x = (p_x, p_x)$, $y = (p_y, p_y)$ be elements of $E(S_H)$ and let $s = (\alpha, \beta)$ be an element of $S_G$. Assume that $(p_x, p_x)(\alpha, \beta)(p_y, p_y) \neq 0$. Note that if $(p_x, p_x)(\alpha, \beta) \neq 0$, we must have that $\alpha$ and $p_x$ have the same source. Similarly, if $(\alpha, \beta)(p_y, p_y) \neq 0$, we must have that $\beta$ and $p_y$ have the same source. Because $p_x$ and $p_y$ are paths in $H$, we have that their sources are in $H^0$. Hence, $\alpha$ and $\beta$ must actually be in paths in $H$ because $H$ is hereditary so $s \in S_H$ and clearly $xsy \in S_H$ as well.
	
	\item Note that the map $E(S_H) \hookrightarrow E(S_G)$ takes $(v, v) \delta_e \mapsto (v, v)\delta_e$. By our isomorphism to the Leavitt Path algebras, we find that the image contains all $v \in H^0$. By the classification of two-sided ideals found in \cite[Chapter~2]{Abrams2017}, we know that any two-sided ideal must contain the vertices for a hereditary saturated subset. If the only hereditary saturated subset containing $H^0$ is all vertices $G^0$, then any two-sided ideal containing our subring $L_R(H)$ must be the entire ring $L_R(G)$, which proves the assumption.
\end{enumerate}

\end{proof}

The following corollary is a less precise version of \cite[Proposition~3.5]{ABRAMS2007753}, which proved that the Leavitt path algebras of finite directed acyclic graphs are isomorphic to the sum of a matrix ring $\oplus_{i=1}^n M_{n_i}(K)$. In \cite{ABRAMS2007753}, it's shown that one can actually compute the values $n_i$ (and $n$). Our corollary does not compute the values of $n_i$, but does allow for possibly infinite size graphs.

\begin{corollary}[Directed Acyclic Graphs]
Let $G$ be a directed acyclic graph without infinite emitters such there for every vertex $v$ there is some $N_v>0$ such that all paths starting from $v$ are of length less than $N_v$. Then $L_R(G) \sim_M \bigoplus\limits_{v \in G_{\text{sink}}} R$.
\end{corollary}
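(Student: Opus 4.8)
The plan is to induct on (a version of) path length to reduce a general graph of the stated form to its subgraph of sinks, then apply Theorem~\ref{MoritaFinite}. First I would observe that the set of sinks $G_{\text{sink}}$ is a hereditary subset of $G^0$: there are no edges out of a sink, so trivially nothing can be ``reached'' from a sink that escapes the set. The subgraph $H$ induced on $G_{\text{sink}}$ has no edges at all, so its Leavitt path algebra is just $\bigoplus_{v \in G_{\text{sink}}} Rv \cong \bigoplus_{v \in G_{\text{sink}}} R$ (each isolated vertex contributes a copy of $R$, and there are no relations linking distinct vertices). So by Theorem~\ref{MoritaFinite} it suffices to show that $G_{\text{sink}}$ is contained in no proper saturated hereditary subset of $G^0$.

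The key step is therefore: \textbf{the only saturated hereditary subset of $G^0$ containing $G_{\text{sink}}$ is $G^0$ itself.} Let $W$ be such a subset. I would prove $v \in W$ for every $v \in G^0$ by induction on $N_v$, the bound on the length of paths issuing from $v$ (which exists by hypothesis, and for which we may take the minimal such bound). The base case is $N_v = 1$: then $v$ emits no edges of positive length that continue, i.e.\ $v$ is a sink (here we use that $G$ has no infinite emitters, so $v$ is a regular vertex, and that $v$ is not an infinite emitter — combined with acyclicity, a vertex from which all paths have length $0$ emits no edges at all), hence $v \in G_{\text{sink}} \subseteq W$. For the inductive step, suppose $N_v > 1$ and every vertex $u$ with $N_u < N_v$ lies in $W$. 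Since $v$ is regular (no infinite emitters) and $G$ is acyclic, the finite set $r(s^{-1}(v))$ of targets of edges emitted by $v$ consists of vertices $u$ each satisfying $N_u \le N_v - 1 < N_v$ (any path from $u$ extends a length-$1$ path from $v$, so is shorter than the bound for $v$). By the inductive hypothesis all such $u$ lie in $W$; since $v$ emits at least one edge and only finitely many, all of which land in $W$, saturation of $W$ forces $v \in W$. Hence $W = G^0$.

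Putting these together: $G_{\text{sink}}$ is hereditary and contained in no proper saturated hereditary subset, so Theorem~\ref{MoritaFinite} applies with $H$ the subgraph on $G_{\text{sink}}$, giving $L_R(G) \sim_M L_R(H) = \bigoplus_{v \in G_{\text{sink}}} R$.

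The main obstacle I anticipate is handling the degenerate edge cases cleanly in the induction — in particular ensuring that ``every path from $v$ has length $<1$'' genuinely forces $v$ to be a sink (this needs acyclicity to rule out loops and the no-infinite-emitters hypothesis to know $v$ is regular, so that the saturation condition is even applicable), and being careful that $N_v$ is well-defined and that the minimal choice of $N_v$ strictly decreases along edges. A secondary, more bookkeeping-level point is verifying that the induced subgraph on $G_{\text{sink}}$ really has Leavitt path algebra isomorphic to $\bigoplus_{v\in G_{\text{sink}}} R$ rather than something with extra idempotent relations; since there are no edges this is immediate from the presentation, but it should be stated. One should also note the induction is well-founded even when $G^0$ is infinite, since it is an induction on the natural-number invariant $N_v$, not on $G^0$ itself.
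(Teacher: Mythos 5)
Your proposal is correct and follows essentially the same route as the paper: identify $G_{\text{sink}}$ as a hereditary set whose induced subgraph has Leavitt path algebra $\bigoplus_{v} R$, show via the finiteness of path length and regularity of non-sink vertices that no proper saturated hereditary subset contains the sinks, and apply Theorem~\ref{MoritaFinite}. Your explicit induction on the minimal bound $N_v$ merely formalizes the paper's informal "finite height, so saturation propagates back to $v$" argument.
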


\begin{proof}
Let $H^0 = G_{\text{sink}}$.  Because $H^0$ is just sinks, it is clearly hereditary and the induced graph is just the disjoint union of the vertices. It's well known that $L_R(H) = \bigoplus_{v \in H^0} R$.

By the conditions on our graph, for any vertex $v$, we have that the induced downward hereditary directed graph is of finite height and thus must eventually hit sinks. Furthermore, because $G$ has no infinite emitters, the induced downward hereditary graph is finite and every non-sink vertex is singular. This means that any saturated subset that contains the sinks must also contain $v$. Because $v$ was arbitrary, this means that the only saturated subset containing the sinks is all the vertices. Hence, we can apply Theorem~\ref{MoritaFinite}.
\end{proof}

A directed graph $G$ is called \textit{functional} if all vertices have out degree $1$. The next corollary computes the Morita equivalence class of functional graphs.

\begin{corollary}[Functional Graphs]
For $G$ a functional graph, $L_R(G)$ is Morita equivalent to a direct sum $\left(\bigoplus_{i \in I_1} R[x, x^{-1}]\right) \oplus \left(\bigoplus_{i \in I_2} R\right)$ where $I_1$ is the number of connected components with a cycle and $I_2$ is the number of connected components without a cycle.
\end{corollary}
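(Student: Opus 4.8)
The plan is to reduce the statement to Corollary~\ref{MoritaFinite} applied to a well-chosen hereditary subset, by analyzing the structure of a functional graph. Since $G$ is functional (every vertex emits exactly one edge), $G$ has no infinite emitters and no sinks, and each connected component $G_j$ of $G$ has a very rigid shape: following the unique outgoing edge from any vertex, one eventually lands on a cycle (possibly of length one, i.e.\ a loop), and that cycle is unique within the component. So each component is a (possibly infinite) collection of trees feeding into a single cycle, or a collection of trees with no cycle at all (an infinite ``backward'' tree, since finiteness of $G$ is not assumed). First I would make this dichotomy precise: a connected component either contains a unique cycle $c_j$, or contains none.

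Next, for each component $G_j$ that contains a cycle $c_j$, let $H_j^0$ be the vertex set of that cycle; for a component with no cycle, $H_j^0$ will require a separate treatment (see below). Set $H^0 = \bigsqcup_j H_j^0$ over the cycle-containing components. Each $H_j^0$ is hereditary: the only edge out of a cycle vertex stays on the cycle. The induced subgraph $H_j$ on $H_j^0$ is a single cycle of some length $n_j$, and it is standard that $L_R(H_j) \cong M_{n_j}(R[x,x^{-1}])$, which is Morita equivalent to $R[x,x^{-1}]$. Taking the disjoint union over $j$, $L_R(H) = \bigoplus_j L_R(H_j) \sim_M \bigoplus_{i \in I_1} R[x,x^{-1}]$. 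To invoke Theorem~\ref{MoritaFinite} within each component I must check that $H_j^0$ lies in no proper saturated hereditary subset of $G_j$: a non-cycle vertex $v$ of $G_j$ is not singular in the relevant sense but emits a single edge, so any saturated set containing the image of $v$ under the unique path must contain $v$; iterating backward up the trees shows any saturated hereditary set containing the cycle is all of $G_j^0$. Applying Theorem~\ref{MoritaFinite} componentwise and assembling gives $L_R(G_j) \sim_M L_R(H_j)$ for the cycle-containing components.

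For a connected component $G_j$ with no cycle, every path eventually escapes to infinity, so $G_j$ has no sinks and no cycles; such a component is an acyclic functional graph. Here I would argue directly that $L_R(G_j)$ is Morita trivial, i.e.\ $L_R(G_j) \sim_M 0$ is \emph{not} what we want --- rather, one checks $L_R(G_j) \sim_M R$ by picking any single vertex $v$ and observing $\{v\}$ is... no: since $v$ is not a sink, $\{v\}$ is not hereditary. Instead I would use that $G_j$ is the increasing union of finite hereditary subgraphs obtained by truncating the forward paths; each truncation is a finite acyclic graph with a single sink (the truncation point's eventual target does not exist, so one must truncate at the escaping ray and the resulting finite graph is a rooted tree pointing toward one sink), giving $L_R$ of that truncation $\sim_M R$, and the direct limit is again $\sim_M R$. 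Assembling, the no-cycle components contribute $\bigoplus_{i \in I_2} R$. Combining the two families of components yields $L_R(G) \sim_M \bigl(\bigoplus_{i \in I_1} R[x,x^{-1}]\bigr) \oplus \bigl(\bigoplus_{i \in I_2} R\bigr)$, using that $L_R$ of a disjoint union of graphs is the direct sum and that Morita equivalence of rings with local units is preserved under (possibly infinite) direct sums of the pieces.

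The main obstacle I anticipate is the no-cycle case and, more generally, handling components that are infinite. When $G$ is infinite, a ``component with no cycle'' need not be a finite tree, and one must be careful that truncating it produces genuinely hereditary finite subgraphs and that the direct limit of the corresponding Leavitt path algebras is compatible with Morita equivalence; verifying the saturation hypothesis of Theorem~\ref{MoritaFinite} in an infinite component (that the chosen finite hereditary set sits in no proper saturated hereditary subset of the whole infinite component) is the delicate point, since saturation conditions can behave unexpectedly at infinite emitters --- though functionality rules those out, which is exactly why the hypothesis is imposed. A secondary technical point is confirming $L_R$ of a cycle of length $n$ is $M_n(R[x,x^{-1}])$ over a general commutative ring $R$, but this is a routine computation or a citation to the standard Leavitt path algebra literature.
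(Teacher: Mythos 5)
Your decomposition into components and your treatment of the cycle-containing components match the paper: take the cycle as the hereditary subset, note $L_R$ of a cycle of length $n$ is $M_n(R[x,x^{-1}])\sim_M R[x,x^{-1}]$, and use out-degree one to push saturation backward up the trees so that the cycle lies in no proper saturated hereditary subset. That part is fine.

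The no-cycle case has a genuine gap. Your plan is to exhaust such a component by ``truncating the forward paths'' into finite hereditary subgraphs, each Morita equivalent to $R$, and then pass to a direct limit. But a forward truncation $v_0\to v_1\to\cdots\to v_n$ of the escaping ray is \emph{not} a hereditary subset of the component: $v_n$ emits an edge whose range $v_{n+1}$ lies outside the truncation, so heredity fails at exactly the truncation point. Declaring $v_n$ a sink changes the graph (it alters the (CK2) relation at $v_n$), so the Leavitt path algebra of the truncated graph is not a subalgebra of $L_R(G_j)$ of the kind Theorem~\ref{MoritaFinite} applies to, and the subsequent step ``the direct limit is again $\sim_M R$'' is unjustified --- Morita equivalence is not known to pass through a direct limit of algebras that are not even compatibly embedded in $L_R(G_j)$. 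The paper avoids all of this by taking the \emph{entire infinite forward path} $P$ from an arbitrary vertex as the hereditary subset: since each vertex of $P$ emits exactly one edge and that edge stays on $P$, the set $P$ is genuinely hereditary, the induced subgraph is the infinite path graph with $L_R(P)\cong M_{\infty}(R)\sim_M R$, and the saturation argument (every vertex of the component has its unique forward orbit eventually merging into $P$, and saturation propagates backward along out-degree-one vertices) shows $P$ lies in no proper saturated hereditary subset, so Theorem~\ref{MoritaFinite} applies directly. Replacing your truncation-and-limit step with this single infinite hereditary path closes the gap; the rest of your argument (componentwise assembly and compatibility of Morita equivalence with direct sums) then goes through as in the paper.
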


\begin{proof}
If we view $G$ as an undirected graph, we have a set of connected components $C$. It's not hard to prove that these components can be split into two parts: one with exactly one directed cycle $I_1$ and ones with no directed cycles $I_2$.

Again, we know that $L_R(G) \cong \oplus_{X \in C} L_R(X)$ where $X$ is the induced graph by its connected component of vertices. We will use Theorem~\ref{MoritaFinite} to show that when a component $X$ has a directed cycle then $L_R(X) \sim_M R[x, x^{-1}]$ and when $X$ doesn't have a cycle then $L_R (X)\sim_M R$.

Assume that $X$ has a cycle $C$. It's well known that for any cycle $C$, we have that $L_R(C) = M_n(R[x, x^{-1}]) \sim_M R[x, x^{-1}]$. We claim that the cycle $C$ is hereditary and contained in no hereditary saturated subset besides the entirety of $X$. $C$ is clearly hereditary. Let $v \in X$ be arbitrary. Then $v$ must eventually reach the cycle $C$ when following its unique path forward. Because the outdegree was $1$ for all vertices along its unique path, all vertices along the path, including $v$, must be in such a saturated set containing $C$. Thus, we can apply Theorem~\ref{MoritaFinite}.

Now assume that $X$ has no cycle. Let $v \in X$ be arbitrary and consider the path $P$ starting from $v$. This is an infinite path and we know from \cite[Example~1.6.4]{Abrams2017} that $L_R(P) = M_{\infty}(R)$ where $M_{\infty}(R)$ is the set of infinite matrices with finitely many non-zero entries. This is well-known to be Morita equivalent to $R$. Clearly our path $P$ is hereditary. To prove that $P$ is contained in no hereditary saturated subset besides $X$, let $v \in X$ be arbitrary. Because $v$ is in the same component as $P$, there exists some undirected path from $v$ to some vertex of $P$. Because $P$ itself is a path, by similar reasoning as before, we have that the path must be directed from $v$ to $P$. Hence, $v$ must be contained in any saturated subset that contains $P$, so again we can apply Theorem~\ref{MoritaFinite}.
\end{proof}

\section{Desingularization of Labelled Leavitt Path Algebras} \label{AppLabelled}

Labelled spaces were introduced by Bates and Pask in \cite{Bates2005CO} in the context of $C^{\ast}$-algebras. More recently, an algebraic analog has been defined in \cite{Boava2021LeavittPA}. See \cite[Section~5.2]{zhang2025partialactionsgeneralizedboolean} for the definition of a normal weakly left-resolving labelled space, notation used in this section, and a description of the inverse semigroup associated to a labelled space. Our main result of this section will a desingularization result for labelled Leavitt path algebras.

Desingularization is a theorem that all algebras from a combinatorial object are Morita equivalent to an algebra that arises from a ``non-singular'' combinatorial object. Some theorems are easier to prove on these ``non-singular'' objects which makes this equivalence important. Desingularization results for Leavitt path algebras can be found in \cite[Section~5]{abrams2008leavitt} and for Ultragraph algebras in \cite[Section~10]{Firrisa2020MoritaEO}. 

Recently, a desingularization result has been found for labelled spaces in the $C^{\ast}$-algebra case in \cite{banjade2024singularities}. Our construction is similar but not exactly the same. In addition, our result is in a slightly different context because we restrict our objects to be normal weakly left-resolving labelled spaces, whereas \cite{banjade2024singularities} considers weakly left-resolving labelled spaces.

The main theorem is as follows.

\begin{theorem}
Let $(\mathcal E, \mathcal L, \mathcal B)$ be a normal weakly left-resolving labelled space that uses a countable alphabet $\mathcal A$ and such that $\mathcal E^0 \in \mathcal B$. Then there exists a normal weakly left-resolving labelled space $(\mathcal E_F, \mathcal L_F, \mathcal B_F)$ such that $L_R(\mathcal E, \mathcal L, \mathcal B) \sim_M L_R(\mathcal E_F, \mathcal L_F, \mathcal B_F)$ and for all $B \in \mathcal B'$, we have that $B$ is regular in $(\mathcal E_F, \mathcal L_F, \mathcal B_F)$.
\end{theorem}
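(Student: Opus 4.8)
The plan is to construct the desingularized labelled space $(\mathcal E_F, \mathcal L_F, \mathcal B_F)$ explicitly and then apply Theorem~\ref{finalmorita2} to the associated strongly $E^{\ast}$-unitary inverse semigroups $S_1 = S_{(\mathcal E, \mathcal L, \mathcal B)}$ and $S_2 = S_{(\mathcal E_F, \mathcal L_F, \mathcal B_F)}$. The construction mirrors the classical Drinen--Tomforde desingularization: for each ``singular'' element $B \in \mathcal B$ (that is, each $B$ that is either an infinite emitter of labels or emits no labels while not being a source in the appropriate sense), we attach an infinite head or tail that spreads the infinite emission out over a countable sequence of regular vertices, using the countability of the alphabet $\mathcal A$ to enumerate the outgoing labels. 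One then has to check that $(\mathcal E_F, \mathcal L_F, \mathcal B_F)$ is again normal and weakly left-resolving, that $\mathcal E_F^0 \in \mathcal B_F$, and that every $B \in \mathcal B_F$ (or in the relevant subcollection $\mathcal B'$) is regular --- these are the routine bookkeeping steps that make the statement meaningful.

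Next I would identify $(\mathcal E, \mathcal L, \mathcal B)$ with a labelled subspace of $(\mathcal E_F, \mathcal L_F, \mathcal B_F)$ in the evident way (the original generalized vertices sit inside the desingularized ones, and original labelled paths are labelled paths in the new space), which induces an inclusion $S_1 \subseteq S_2$ of the associated inverse semigroups, following the correspondence recorded in \cite[Section~5.2]{zhang2025partialactionsgeneralizedboolean}. With this inclusion in hand, the four hypotheses of Theorem~\ref{finalmorita2} become: (1) $S_1 \subseteq S_2$ holds by construction; (2) $E_1 \subseteq E_2$ is closed downwards, which should follow because the added ``tail'' vertices all lie strictly below the singular generalized vertices they resolve, so no idempotent of $S_2$ below an idempotent of $S_1$ escapes $E_1$ --- here one uses that the desingularization only adds structure \emph{beneath} the original singular elements; (3) for $x, y \in E_1$ and $s \in S_2$ one must produce $s' \in S_1$ with $xsy \leq s'$, which is the labelled-space analog of the ``hereditary'' computation in Theorem~\ref{MoritaFinite}: a labelled path $s$ in $\mathcal E_F$ that is sandwiched between two generalized vertices coming from the original space must, after multiplication, factor through the original space because the tails are only reachable \emph{from} the original singular vertices and lead nowhere back; (4) $L_R(S_1) \subseteq L_R(S_2)$ lies in no proper two-sided ideal, which I would prove by the same device as in the enlargement theorem and Theorem~\ref{MoritaFinite}: show each local unit $B\delta_e$ for $B \in \mathcal B_F$ is generated by the subalgebra together with elements of $L_R(S_2)$, using that every tail vertex is connected back up to an original generalized vertex via a labelled path, together with a saturation/hereditary-closure argument analogous to the ideal classification cited from \cite[Chapter~2]{Abrams2017}.

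The main obstacle will be step (3), the ``compression'' condition $xsy \leq s'$, together with the verification that the construction genuinely lands in the category of \emph{normal} weakly left-resolving labelled spaces. The subtlety is that in a labelled space the generalized vertices are subsets closed under the relative range and source operations and under the Boolean operations, so when we add tails we must simultaneously enlarge $\mathcal B$ to a collection $\mathcal B_F$ that is closed under all the required operations \emph{and} makes the new edges weakly left-resolving (distinct edges into a generalized vertex carry distinct labels); getting these two closure requirements to coexist with regularity of every $B \in \mathcal B'$ is the delicate combinatorial heart of the argument, and it is where the countability of $\mathcal A$ is essential. Once the space is built correctly, conditions (1), (2), and (4) are comparatively mechanical, and (3) reduces to tracking which labelled paths can be pinched between two original generalized vertices.

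I would therefore organize the proof as: first the explicit construction of $(\mathcal E_F, \mathcal L_F, \mathcal B_F)$ with a lemma verifying it is a normal weakly left-resolving labelled space with $\mathcal E_F^0 \in \mathcal B_F$ and all $B \in \mathcal B'$ regular; then a lemma identifying $S_1 \subseteq S_2$ and checking $E_1 \subseteq E_2$ is downward closed; then the pinching lemma giving condition (3); then the no-proper-ideal argument for (4); and finally an appeal to Theorem~\ref{finalmorita2} to conclude $L_R(\mathcal E, \mathcal L, \mathcal B) = L_R(S_1) \sim_M L_R(S_2) = L_R(\mathcal E_F, \mathcal L_F, \mathcal B_F)$.
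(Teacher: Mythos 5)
Your plan has a genuine gap at step (2). You propose to apply Theorem~\ref{finalmorita2}, whose second hypothesis is that $E_1 \subseteq E_2$ is closed downwards, and you justify this by saying the added tail vertices ``lie strictly below the singular generalized vertices they resolve.'' But that is precisely why downward closure \emph{fails}: attaching a tail below a vertex creates new idempotents of $S_2$ (the ones supported along the tail) that sit strictly below idempotents of $S_1$ and do not belong to $E_1$. Concretely, in any construction of this type an idempotent $(\alpha, A, \alpha) \in E_1$ has beneath it idempotents $(\alpha\gamma, B, \alpha\gamma)$ where $\gamma$ ends partway down a tail, and these cannot come from the original space. This is exactly the situation in the paper, where Corollary~\ref{corollary:ine1} shows membership in $E_1$ forces the set $A$ to live at level $X_0$ and the word to end appropriately, so idempotents ending mid-tail are excluded. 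Consequently Theorem~\ref{finalmorita2} is inapplicable, and one must instead verify the weaker hypotheses of Theorem~\ref{finalmorita1}: that $E_1 \subseteq E_2$ preserves finite covers ($S_1 \subseteq_c S_2$) and that $E_1 \subseteq E_2$ is tight. These two verifications are not ``comparatively mechanical''; they are a substantial part of the argument, requiring one to produce, for each idempotent of $E_2$ lying above something in $E_1$, explicit finite covers by elements of $E_1$, with separate treatment of true sinks.

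A second, related inaccuracy is the claim that $(\mathcal E, \mathcal L, \mathcal B)$ sits inside $(\mathcal E_F, \mathcal L_F, \mathcal B_F)$ ``in the evident way,'' with original labelled paths remaining labelled paths. In the desingularization the outgoing labels of a vertex are spread along its tail, so an original edge labelled $a_n$ is re-encoded as a longer word (in the paper, $h(a_n) = b_1 \cdots b_n a_n$), and the embedding $S_1 \hookrightarrow S_2$ is $(\alpha, A, \beta) \mapsto (h(\alpha), A \cap X_0, h(\beta))$. Verifying that this is a semigroup homomorphism (which requires showing $h$ preserves reduced words and prefix relations) and characterizing its image are themselves nontrivial lemmas that your compression step (3) and ideal step (4) would need to rest on. Your overall architecture --- construct the space, check normality/weak left-resolving/regularity, embed the semigroups, verify the compression and fullness conditions, and invoke a Morita theorem from Section~\ref{inversegroupaction} --- matches the paper, but as written the proof cannot close because the specific Morita theorem you invoke does not apply.
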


From now on, we drop normal and weakly-left resolving from our notation, and simply refer to them as labelled spaces. For a labelled space $(\mathcal E, \mathcal L, \mathcal B)$, we first construct an associated labelled space $(\mathcal E_F, \mathcal L_F, \mathcal B_F)$. The assumption that $\mathcal E^0 \in \mathcal B$ will be used to construct our new labelled space. Let $\mathcal A = \{a_1, a_2, \ldots\}$ be countable enumeration of the alphabet. Note that our indexing here starts at $1$.

Define the \textit{true sinks} of $(\mathcal E, \mathcal L, \mathcal B)$ to be \[\mathcal E_{\text{tsink}} = \{v \in \mathcal E^0 \colon \exists B \in \mathcal B \text{ such that } v \in B \text{ and } |\Delta_B| = 0\}\] To start, we define a sequence of sets $X_i \subseteq \mathcal E^0$ for $i = 1, 2, \ldots$ as\[X_i \coloneqq \mathcal E_{\text{tsink}} \cup \{v \in \mathcal E^0 \colon \exists (j \geq i) \text{ such that } r(v, a_j) \neq \emptyset\} \subseteq \mathcal E^0\] Define $X_0 = \mathcal E^0$. Note that $X_{i+1} \subseteq X_i$ for all $i = 0, 1 \ldots$

We define \[\mathcal E_F^0 \coloneqq \bigsqcup_{i=0}^\infty X_i\]

Note that for $v \in \mathcal E^0$, we have that $v$ is in some (possibly infinite) prefix of the $\{X_i\}_{i=0}^{\infty}$. Let $n_v$ be the last $i$ where $v \in X_i$ where we interpret $n_v = \infty$ to be that all $v \in X_i$ for all $i$. Note that $n_v$ is well-defined because $v \in X_0$. For a given vertex $v$ with $n_v$, we denote $v_i$ the copy of $v$ in $X_i$ viewed in $\mathcal E_F^0$. 

For every $v \in \mathcal E^0$ and $i \in [0, n_v)$, we add an edge from $v_i$ to $v_{i+1}$ and label this edge with $b_{i+1}$. Now for every $v \in \mathcal E^0$ and outgoing edge $(v, w) \in \mathcal E^1$ with label $a_i \in \mathcal L$, we add an edge $(v_i, w_0)$ with label $a_i$. Note that $v_i$ is well-defined because $r(v, a_i) \neq \emptyset$ and hence $i \leq n_v$. Our new alphabet is defined as $\mathcal A_F \coloneqq \mathcal A \sqcup \{b_1, b_2, \ldots\}$. 

For a subset $X \subseteq \mathcal E^0$, we define the set $\mathcal B(X) = \{B \cap X \colon B \in \mathcal B\}$. We define $\mathcal B_F$ to be the finite disjoint unions $\bigsqcup_{i=0}^\infty \mathcal B(X_i)$. Elements $B \in \mathcal B_F$ can be represented as $B = \bigsqcup_{i=0}^{\infty} (B_i \cap X_i)$ for some $B_i \in \mathcal B$ where $B_i = \emptyset$ for all but a finite number of $i$. Unions, intersections, and relative complements are all defined pairwise. For $A \in \mathcal B$, we write $A \cap X_i$ to view $A$ in the disjoint image $\mathcal B(X_i) \subseteq \mathcal B_F$.

\begin{lemma}
	Let $B = \bigsqcup_{i=0}^{\infty} (B_i \cap X_i)$ be a representation. Let $i \in [0, \infty)$ and $a_n \in \mathcal A$. The following holds.
	
	\begin{enumerate}
		\item $r(B, b_i) = B_i \cap X_{i+1}$
		\item $r(B, a_n) = r(B_n, a_n) \cap X_0$
	\end{enumerate}
\end{lemma}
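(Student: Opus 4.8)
The plan is to unwind the definitions of the range map $r(\cdot,\cdot)$ in the desingularized labelled space $(\mathcal E_F, \mathcal L_F, \mathcal B_F)$ and trace through exactly which edges carry the labels $b_i$ and $a_n$. Recall from the construction that the only edges labelled $b_{i+1}$ are those of the form $(v_i, v_{i+1})$ for $v \in \mathcal E^0$ with $i \in [0, n_v)$, i.e. with $v \in X_{i+1}$; and the only edges labelled $a_n$ are those of the form $(v_n, w_0)$ coming from an edge $(v,w) \in \mathcal E^1$ with label $a_n$, where $v_n$ exists precisely when $n \le n_v$, equivalently $v \in X_n$. So both computations are really just bookkeeping about which copies $v_i$ of vertices exist and which layer they land in.

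For part (1): I would start from $r(B, b_i)$, which by definition is the set of vertices in $\mathcal E_F^0$ that are ranges of $b_i$-labelled edges emitted from vertices of $B$. Since every $b_i$-labelled edge has source in layer $X_{i-1}$ and range in layer $X_i$, only the part of $B$ sitting in $X_{i-1}$ contributes, and the source of a $b_i$-edge is $v_{i-1}$ while its range is $v_i$; but the $b_i$-edge from $v_{i-1}$ exists iff $v \in X_i$. Writing $B = \bigsqcup_j (B_j \cap X_j)$, the relevant piece is $B_{i-1} \cap X_{i-1}$, and the set of $v$ with $v_{i-1}$ in that piece that also satisfy $v \in X_i$ is $B_{i-1} \cap X_{i-1} \cap X_i = B_{i-1} \cap X_i$ (using $X_i \subseteq X_{i-1}$), whose image in layer $X_i$ is $B_{i-1} \cap X_i$. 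Here I should be careful with the indexing convention in the statement: the claim is written as $r(B,b_i) = B_i \cap X_{i+1}$, so the $b_i$ in the statement is what I have been calling $b_{i+1}$ (the edge added from layer $i$ to layer $i+1$); matching the paper's convention, the edges $v_i \to v_{i+1}$ are labelled $b_{i+1}$ in the body text but the lemma reindexes so that $r(B,b_i)$ picks up $B_i \cap X_{i+1}$. I would simply state which convention I am using and then the computation gives $B_i \cap X_i \cap X_{i+1} = B_i \cap X_{i+1}$.

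For part (2): here $r(B, a_n)$ collects ranges of $a_n$-labelled edges emitted from $B$. Every $a_n$-labelled edge in $\mathcal E_F$ is of the form $(v_n, w_0)$ arising from $(v,w) \in \mathcal E^1$ with $\mathcal L(v,w) = a_n$, and it is emitted from $B$ precisely when $v_n$ lies in $B$, i.e. when $v \in B_n \cap X_n$. Its range $w_0$ lies in layer $X_0$. So $r(B, a_n)$ is the set of $w_0$ with $w \in r(B_n, a_n)$, which is exactly $r(B_n, a_n) \cap X_0$ viewed in $\mathcal B(X_0) \subseteq \mathcal B_F$. I would also note that $v_n$ existing at all forces $v \in X_n$, so intersecting with $X_n$ is automatic once we know $\mathcal L(v,w)=a_n$ and $r(v,a_n) \ne \emptyset$, which is consistent.

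The computations themselves are routine; the only real subtlety — and the step I would spend the most care on — is keeping the layer indices and the relabelling of the $b$-edges straight, since the body text and the lemma statement use slightly offset conventions, and making sure that the various "copy of $v$ in layer $i$" identifications are applied consistently on both sides of each equality. Once the convention is pinned down, both equalities follow by directly reading off the edge set of $(\mathcal E_F, \mathcal L_F, \mathcal B_F)$ from its construction.
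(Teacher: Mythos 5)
Your proof is correct and follows essentially the same route as the paper's: identify which layer of $B$ emits the relevant labelled edges and where their ranges land, then simplify using $X_{i+1}\subseteq X_i$. In fact your simplification $B_i \cap X_i \cap X_{i+1} = B_i \cap X_{i+1}$ in part (1) is the correct one --- the paper's own proof erroneously writes the result as $B_i \cap X_i$ --- and your explicit handling of the $b_i$ versus $b_{i+1}$ indexing offset between the construction and the lemma statement is a clarification the paper's proof omits.
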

\begin{proof} We prove each statement separately.
	\begin{enumerate}		
		\item $r(B, i) = B_i \cap X_i \cap X_{i+1}$. From $X_{i+1} \subseteq X_i$, this is equal to $B_i \cap X_i$.
		
		\item Note that the $X_1$ symbolically denotes being in $\mathcal B(X_1)$ but serves no actual purpose because $X_1 = \mathcal B$. We can calculate that $r(B, a_n) = r(B_n \cap X_n, a_n)$. However, for any $v \in B_n$ with an outgoing edge labelled $a_n$ we have that $v \in X_n$ by definition. Hence, $r(B_n \cap X_n, a_n) = r(B_n, a_n)$ and we are done.
	\end{enumerate}
\end{proof}

The below theorem will use the assumption that $\mathcal E^0 \in \mathcal B$.

\begin{theorem}
	The constructed $(\mathcal E_F, \mathcal L_F, \mathcal B_F)$ is a normal weakly left-resolving labelled space where $(\mathcal B_F)_{\text{reg}} = \mathcal B_F$.
\end{theorem}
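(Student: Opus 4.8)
The plan is to verify each of the defining properties of a normal weakly left-resolving labelled space for $(\mathcal E_F, \mathcal L_F, \mathcal B_F)$ in turn, using the structural lemma above that computes the ranges $r(B, b_i)$ and $r(B, a_n)$ of an arbitrary element $B = \bigsqcup_{i=0}^\infty (B_i \cap X_i)$ of $\mathcal B_F$. First I would check that $\mathcal B_F$ is an accommodating family: it is closed under finite unions, intersections, and relative complements by construction (these are defined componentwise on the disjoint pieces $\mathcal B(X_i)$, and each $\mathcal B(X_i) = \{B \cap X_i : B \in \mathcal B\}$ inherits the Boolean operations from $\mathcal B$), and crucially it is closed under taking ranges of labelled edges — this is exactly what the preceding lemma establishes, since $B_i \cap X_{i+1}$ lies in $\mathcal B(X_{i+1})$ and $r(B_n, a_n) \cap X_0$ lies in $\mathcal B(X_0) = \mathcal B$. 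Here the hypothesis $\mathcal E^0 \in \mathcal B$ enters: it guarantees $X_0 = \mathcal E^0 \in \mathcal B(X_0) \subseteq \mathcal B_F$, so $\mathcal E_F^0 \in \mathcal B_F$, and more generally it lets us freely intersect with the $X_i$'s (since each $X_i$, being a union of $\mathcal E_{\text{tsink}}$ with a range-type set, will be shown to lie in $\mathcal B$).

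Next I would verify the weakly left-resolving condition: for $B, C \in \mathcal B_F$ and a label $\ell \in \mathcal A_F$, we need $r(B, \ell) \cap r(C, \ell) = r(B \cap C, \ell)$. For $\ell = b_i$ this is immediate from part (1) of the lemma and componentwise intersection; for $\ell = a_n \in \mathcal A$ it reduces via part (2) to the weakly left-resolving property of the original space $(\mathcal E, \mathcal L, \mathcal B)$ applied to $B_n$ and $C_n$, together with the fact that intersection commutes with $\cap X_0$. For normality I would check that each generator $B \in \mathcal B_F$ with $|\Delta_B| < \infty$ decomposes appropriately and that the relevant boundary sets $\Delta_B$ behave correctly — here one must track that a copy $v_i$ of a vertex $v$ has outgoing $b_{i+1}$-edge precisely when $i < n_v$, and outgoing $a_i$-edge exactly when $r(v, a_i) \neq \emptyset$ (which forces $i \leq n_v$), so $v_i$ emits finitely or cofinitely in a controlled way. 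Finally, for the claim $(\mathcal B_F)_{\text{reg}} = \mathcal B_F$, I would show every $B \in \mathcal B_F$ is regular, i.e. $0 < |\Delta_B| < \infty$ on the relevant pieces: a copy $v_i$ with $i \geq 1$ emits only edges labelled $b_{i+1}$ (one label) plus possibly $a_i$, so $|\Delta_{v_i}|$ is finite; the true sinks $\mathcal E_{\text{tsink}}$ have been absorbed into every $X_i$ so that they now emit the $b$-edges; and the construction of the $X_i$'s via the tail of the alphabet ensures that from level $X_i$ onward each vertex sees only finitely many distinct labels from $\{a_1, \ldots, a_{i}\}$, taming the infinite emitters.

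The main obstacle I anticipate is the regularity claim and, intertwined with it, the finiteness bookkeeping in the normality check. In the original space a vertex $v$ can be an infinite emitter (infinitely many distinct labels on outgoing edges), and the whole point of the desingularization is that splitting $v$ into the chain $v_0 \to v_1 \to v_2 \to \cdots$ with the $a_i$-edge relocated to emanate from $v_i$ turns this into a situation where each copy emits finitely. Making this precise requires showing that for each $i$, the vertex $v_i$ (when $i \geq 1$) has $r(v_i, a_j) = \emptyset$ for $j \neq i$ and that the $b$-labels give exactly one more outgoing label, so $|\Delta_{v_i}| \leq 2$; and that $v_0$ emits exactly the $b_1$-edge (hence $|\Delta_{v_0}| = 1$) unless $v$ already emitted $a_1$. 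One must also handle the level-$\infty$ copies (vertices with $n_v = \infty$) and confirm they are not sinks — here the countability of $\mathcal A$ and the definition of $X_i$ as "some $j \geq i$ with $r(v, a_j) \neq \emptyset$" is what guarantees $n_v = \infty$ forces infinitely many outgoing labels, so each $v_i$ still emits its $b_{i+1}$-edge. Assembling these local computations into the statement that \emph{every} element of $\mathcal B_F$ (not just singletons) is regular will require the componentwise structure and the finiteness of the disjoint union in the definition of $\mathcal B_F$.
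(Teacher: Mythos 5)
Your overall plan coincides with the paper's proof in every part except one: the accommodating, weakly left-resolving, and normality checks are carried out componentwise exactly as you describe, using the range computations $r(B,b_i)$ and $r(B,a_n)=r(B_n,a_n)\cap X_0$ together with the hypothesis $\mathcal E^0\in\mathcal B$; and the finiteness half of regularity, $|\Delta_B|<\infty$, follows, as you say, from the fact that only finitely many components $B_i$ of $B=\bigsqcup_{i}(B_i\cap X_i)$ are nonempty, so that only the corresponding labels $b_{i+1}$ and $a_i$ can have nonempty range. Your observation that each copy $v_i$ emits at most the two labels $a_i$ and $b_{i+1}$ is the right structural fact underlying this.

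The genuine gap is in the positivity half, $|\Delta_B|>0$. Your vertex-level claim that ``$v_0$ emits exactly the $b_1$-edge (hence $|\Delta_{v_0}|=1$)'' is false: $v_0$ has an outgoing $b_1$-edge only when $v\in X_1$, i.e.\ only when $v$ is a true sink or emits some label in $\mathcal E$. A vertex $v$ that is a sink in $\mathcal E$ but \emph{not} a true sink (meaning no $B\in\mathcal B$ containing $v$ has $|\Delta_B|=0$) has $n_v=0$, so its copy $v_0$ is a genuine sink in $\mathcal E_F$, and singleton-by-singleton positivity fails. Positivity only holds at the level of sets in $\mathcal B_F$, and the needed argument --- which is precisely why the construction isolates the notion of true sink --- is a contradiction argument: if $B_0\neq\emptyset$ and $B_0\cap X_1=\emptyset$ (so the level-$0$ part of $B$ emits no $b_1$-edge), then no vertex of $B_0$ is a true sink and none has an outgoing edge in $\mathcal E$, hence $|\Delta_{B_0}|=0$ in the original space, which makes every vertex of $B_0$ a true sink with witness $B_0$ itself, contradicting $B_0\cap X_1=\emptyset$. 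Your closing remark that assembling the local computations ``will require the componentwise structure'' gestures at this, but the local computations themselves do not yield positivity, so without this set-level step the proof of $(\mathcal B_F)_{\mathrm{reg}}=\mathcal B_F$ does not close.
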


\begin{proof}	
	We first show that $\mathcal B_F$ contains $r(a)$ for all $a \in \mathcal A_F$ and that $\mathcal B_F$ is closed under relative ranges. It's easy to see that $r(b_i) = X_{i+1}$ for $i = 0, 1, \ldots$. Because we assumed that $\mathcal E^0 \in \mathcal B$, we have that $X_{i+1} = \mathcal E^0 \cap X_{i+1} \in \mathcal B(X_{i+1})$ which is clearly in $\mathcal B_F$ and hence $r(b_i) \in \mathcal B_F$. Now let $B = \bigsqcup_{i=0}^{\infty} B_i \cap X_i$ and $a_n\in \mathcal L_F$ and $i \in [0, \infty)$. By the lemma $r(B, b_i) = B_i \cap X_{i+1} \in \mathcal B_F$. Furthermore, $r(B, a_n) = r(B_n, a_n) \in \mathcal B = X_0$. 
	
	We now show our space is weakly left resolving. Consider $A = \bigsqcup_{i=0}^{\infty} A_i \cap X_i$ and $B = \bigsqcup_{i=0}^{\infty} B_i \cap X_i$. Note that $A \cap B = \bigsqcup_{i=0}^{\infty} (A_i \cap B_i) \cap X_i$. Hence, we find that for $i \in [0, \infty)$ we have that \[r(A \cap B, b_i) = (A_i \cap B_i) \cap X_{i+1} = (A_i \cap X_{i+1}) \cap (A_i \cap X_{i+1}) =  r(A, b_i) \cap r(B, b_i)\] For $a_n \in \mathcal L$, we calculate \[r(A \cap B, a_n) = r(A_n \cap B_n, a_n) = r(A_n, a_n) \cap r(B_n,  b_n) = r(A, a_n) \cap r(B, b_n)\] where here we use the fact that our original space $(\mathcal E, \mathcal L, \mathcal B)$ was weakly left resolving. It's an easy exercise in labelled spaces to show that the character case implies that this is true for all $\alpha \in \mathcal L_F^{\ast}$. 

	The space is normal because we can define relative complements on any $\mathcal B(X_i)$ using the relative complements from $\mathcal B$.
	
	We now show that all sets in $\mathcal B_F$ are regular. It suffices to show that $0 < |\Delta_B| < \infty$ for all $\emptyset \neq B \in \mathcal \mathcal B_F$. Write $B = \bigsqcup_{i=0}^{\infty} B_i \cap X_i$. 
	
	We first show that $|\Delta_B| > 0$. If there exists some $i > 0$ where $B_i \cap X_i \neq \emptyset$, then either $r(B, a_i) \neq \emptyset$ or $r(B, b_i) \neq \emptyset$, so $\Delta_B$ cannot be empty. If the last statement was not true, then we must have that $B_0  \neq \emptyset$. Assume for the sake of contradiction that $r(B, b_1) = \emptyset$. This means that $B_0 \subseteq \mathcal B_{\text{sink}} \setminus \mathcal B_{\text{tsink}}$. But this means that all elements in $B_0$ are true sinks, which is a contradiction.

	We now show that $|\Delta_B| < \infty$. We note that $r(B, b_i) = B_i \cap X_{i+1}$. But this can only be non-zero for a finite number of $i$ because $B_i = \emptyset$ for all but a finite number of $i$. Furthermore, $r(B, a_n) = r(B_n, a_n) \cap X_0$. Again, because $B_n = \emptyset$ for all but a finite number of $n$, we have that $r(B, a_n) = \emptyset$ for all but a finite number of $a_n$. 
	
	Hence, $|\Delta_B| < \infty$ as well.
	
\end{proof}

We now prove the conditions of Theorem~\ref{finalmorita1}. For ease of notation, let $S_1 \coloneqq  S_{(\mathcal E, \mathcal L, \mathcal B)}$ and $S_2 \coloneqq S_{(\mathcal E_F, \mathcal L_F, \mathcal B_F)}$ as described in \cite[Section~5.2]{zhang2025partialactionsgeneralizedboolean}. We first build the injective homomorphism $S_1 \hookrightarrow S_2$. To do this, we associate to $a_n \in \mathcal A$ the word $[b_1, \ldots, b_n, a_n] \in \mathcal A_F^{\ast}$. This defines a morphism $h: \mathbb F[\mathcal A] \rightarrow \mathbb F[\mathcal A_F]$. Using this $h$, we define a map $S_1 \hookrightarrow S_2$ that takes \[(\alpha, A, \beta) \mapsto (h(\alpha), A \cap X_0, h(\beta))\]

It remains to prove that this is a valid homomorphism. Below are some calculations involving $h$, which we will use without mention.

\begin{lemma} \label{lemma:hcalc} Let $\alpha \in \mathcal L^{\ast}$. Let $B = \bigsqcup_{i=0}^{\infty} B_i \cap X_i$.
	\begin{enumerate}
		\item $r(B, h(\alpha)) = r(B_i, \alpha) \cap X_0$
		\item 	$(\mathcal B_F)_{h(\alpha)} = \mathcal B_{\alpha}(X_0)$
		\item $(\mathcal B_F)_{h(\alpha)b_1\ldots b_i} = \mathcal B_{\alpha}(X_{i})$
	\end{enumerate}
\end{lemma}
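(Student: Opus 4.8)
The plan is to establish (1) by induction on the length of $\alpha$ and then obtain (2) and (3) as short consequences. The ingredients I would use are: the multiplicativity $h(\alpha\beta)=h(\alpha)h(\beta)$ (since $h$ is a free-monoid homomorphism), the functoriality $r(r(A,\gamma),\delta)=r(A,\gamma\delta)$ of relative ranges, the single-letter range formulas for $\mathcal{B}_F$ recorded above (the lemma computing $r(B,b_i)$ and $r(B,a_n)$), and the two structural facts built into the construction, $X_{i+1}\subseteq X_i$ and $r(v,a_j)\neq\emptyset\Rightarrow v\in X_j$. The guiding observation is that a label of the form $h(\alpha)$ begins by walking down the $b$-tail out of the $0$-th copies, so $r(B,h(\alpha))$ depends on $B$ only through its $X_0$-component $B_0$; in particular the stated formulas do not depend on the chosen representation $B=\bigsqcup_i B_i\cap X_i$.

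For (1), take $\alpha$ nonempty and write $\alpha=a_n\alpha'$, so $h(\alpha)=b_1\cdots b_n\,a_n\,h(\alpha')$. I would first compute $r(B,b_1\cdots b_n\,a_n)$: iterating the $b$-range formula (transporting the $X_0$-component one layer down at each step) carries $B_0$ along the tail $v_0\to v_1\to\cdots$ and leaves it in the $X_n$-layer, after which the $a_n$-range formula produces $r(B_0,a_n)\cap X_0$ — the passage through the $X_n$-layer is free because $r(v,a_n)\neq\emptyset\Rightarrow v\in X_n$. This already proves the base case $\alpha=a_n$. For the inductive step, note that $r(B_0,a_n)\cap X_0$ sits entirely in the $X_0$-layer, so its own $X_0$-component is $r(B_0,a_n)$; applying the inductive hypothesis to $\alpha'$ and then functoriality gives
\[
r(B,h(\alpha))=r\bigl(r(B_0,a_n)\cap X_0,\ h(\alpha')\bigr)=r\bigl(r(B_0,a_n),\alpha'\bigr)\cap X_0=r(B_0,a_n\alpha')\cap X_0=r(B_0,\alpha)\cap X_0 .
\]

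Parts (2) and (3) follow by letting $B$ range over $\mathcal{B}_F$: its $X_0$-component $B_0$ then ranges over all of $\mathcal{B}$ (take $B=B_0\cap X_0$), so $\{\,r(B,h(\alpha)):B\in\mathcal{B}_F\,\}=\{\,r(B_0,\alpha)\cap X_0:B_0\in\mathcal{B}\,\}$, which is exactly $\mathcal{B}_\alpha(X_0)$ by the definition of $\mathcal{B}_\alpha$ (if one reads $(\mathcal{B}_F)_{h(\alpha)}$ as the down-set $\{\,C\in\mathcal{B}_F:C\le r(h(\alpha))\,\}$, one also uses that such a $C$ lies in the $X_0$-layer together with closure of $\mathcal{B}$ under intersection). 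For (3) I would insert one extra step: by functoriality and (1), $r(B,h(\alpha)b_1\cdots b_i)=r\bigl(r(B_0,\alpha)\cap X_0,\ b_1\cdots b_i\bigr)$, and iterating the $b$-range formula once more gives $r(A\cap X_0,\,b_1\cdots b_i)=A\cap X_i$ for every $A\in\mathcal{B}$; hence the collection equals $\{\,r(B_0,\alpha)\cap X_i:B_0\in\mathcal{B}\,\}=\mathcal{B}_\alpha(X_i)$.

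The one place requiring care is the index bookkeeping along $b_1\cdots b_n$ (an off-by-one between the tail index and the layer index is easy to introduce, and one must track where the final $a_n$-step deposits the result); there is no conceptual obstacle beyond the single observation that $h(\alpha)$, and $h(\alpha)$ extended by a block of tail edges, interacts with its argument only through that argument's $X_0$-layer.
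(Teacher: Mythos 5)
Your proposal is correct and follows the same route the paper intends: the paper's proof is the one-line remark that one verifies the single-character case $a_n\mapsto b_1\cdots b_n a_n$ using the preceding single-letter range formulas and then extends to all of $\mathcal L^{\ast}$, which is exactly your induction on $|\alpha|$ plus functoriality of relative ranges. Your layer bookkeeping ($b_1\cdots b_n$ carries the $X_0$-component down to the $X_n$-layer, where the $a_n$-step is unobstructed because $r(v,a_n)\neq\emptyset$ forces $v\in X_n$, and deposits the result back in the $X_0$-layer) is the right reading and correctly resolves the statement's $r(B_i,\alpha)$ as $r(B_0,\alpha)$.
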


\begin{proof}
	These are all easy proofs in the character case for $a \in \mathcal A$ and can be extended to all $\alpha \in \mathcal L^{\ast}$ easily.
\end{proof}

We now present a lemma that will be used to show that the map $S_1 \hookrightarrow S_2$ is a morphism.

\begin{lemma}
	If $\alpha \beta^{-1} \in \mathbb F[\mathcal A]$ for $\alpha, \beta \in \mathcal L^{\ast}$ is a reduced word, then so is $h(\alpha)h(\beta)^{-1} \in \mathbb F[\mathcal A_F]$.
\end{lemma}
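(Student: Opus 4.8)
The plan is to show that no cancellation can occur in the concatenated word $h(\alpha)h(\beta)^{-1}$ by tracking where cancellation between $h(\alpha)$ and $h(\beta)^{-1}$ could possibly happen. Recall that each letter $a_n \in \mathcal A$ is sent to the block $h(a_n) = b_1 b_2 \cdots b_n a_n$. So if $\alpha = a_{n_1} a_{n_2} \cdots a_{n_k}$ then $h(\alpha) = (b_1 \cdots b_{n_1} a_{n_1})(b_1 \cdots b_{n_2} a_{n_2}) \cdots (b_1 \cdots b_{n_k} a_{n_k})$, and this word is already reduced in $\mathbb F[\mathcal A_F]$: within a single block the letters $b_1, \dots, b_n, a_n$ are pairwise distinct positive generators, and across the boundary of two consecutive blocks we have $\cdots a_{n_j} b_1 \cdots$, with $a_{n_j} \neq b_1^{-1}$ since both are positive generators from disjoint alphabets. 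The same applies to $h(\beta)$, hence to $h(\beta)^{-1}$ (which is reduced). Thus the only place a reduction in $h(\alpha)h(\beta)^{-1}$ could begin is exactly at the seam between the last letter of $h(\alpha)$ and the first letter of $h(\beta)^{-1}$.

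The key step is therefore to analyze that seam. The word $h(\alpha)$ ends in the letter $a_{n_k}$ (the label of the last edge of $\alpha$) and $h(\beta)^{-1}$ begins with $a_{m_\ell}^{-1}$ (the inverse of the label of the last edge of $\beta$), where I write $\beta = a_{m_1} \cdots a_{m_\ell}$. First I would dispose of the edge cases: if $\alpha$ or $\beta$ is empty, then $h$ of it is empty and there is nothing to reduce, so assume both are nonempty. Now, cancellation at the seam requires $a_{n_k} = a_{m_\ell}$, i.e.\ the last letters of $\alpha$ and $\beta$ agree. But $\alpha\beta^{-1}$ being reduced means precisely that $\alpha$ and $\beta$ share no common suffix letter at the end — equivalently the last letter of $\alpha$ differs from the last letter of $\beta$ (this is what "$\alpha\beta^{-1}$ reduced" unpacks to, since $\alpha$ and $\beta$ are themselves positive words). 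Hence $a_{n_k} \neq a_{m_\ell}$, so no cancellation occurs at the seam, and $h(\alpha)h(\beta)^{-1}$ is reduced.

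I expect the main obstacle — really the only thing requiring care — to be making the induction/bookkeeping on the structure of $h(\alpha)$ clean: one should verify once and for all that $h$ maps a reduced positive word to a reduced positive word and, more to the point, that the last letter of $h(\alpha)$ is the last letter of $\alpha$ (and similarly the first letter of $h(\alpha)$ is $b_1$ whenever $\alpha$ is nonempty). With those two facts the seam analysis is immediate. A minor subtlety is to confirm that after the single possible cancellation $a_{n_k}^{-1} a_{m_\ell} \to \varepsilon$ were it to happen, further cancellation would cascade — but since we rule out even the first cancellation, this does not arise; I would just remark that the letter immediately preceding $a_{n_k}$ in $h(\alpha)$ is $b_{n_k}$ and the letter immediately following $a_{m_\ell}^{-1}$ in $h(\beta)^{-1}$ is $b_{m_\ell}^{-1}$, so even the "second layer" of the seam is safe, which makes the argument robust and also reproves that no cancellation can propagate.
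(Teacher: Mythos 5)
Your proof is correct and takes essentially the same approach as the paper: both arguments reduce to observing that the last letter of $h(\alpha)$ is $a_{i_\alpha}$ and the last letter of $h(\beta)$ is $a_{i_\beta}$, so reducedness of $\alpha\beta^{-1}$ (i.e.\ $i_\alpha \neq i_\beta$, modulo the empty-word cases) rules out cancellation at the seam. Your version is somewhat more careful than the paper's in explicitly verifying that $h(\alpha)$ and $h(\beta)^{-1}$ are themselves reduced so that the seam is the only place cancellation could start, but the key idea is identical.
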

\begin{proof}
	If $\alpha$ or $\beta$ is the empty word, this is obvious. Otherwise, assuming that both of them are non-zero, we know that the last element $\alpha_{|\alpha|} = a_{i_\alpha}$ and $\beta_{|\beta|} = a_{i_{\beta}}$ for some values $i_{\alpha}, i_{\beta}$. Note that because $\alpha \beta^{-1}$ is reduced, we have that $\alpha_{|\alpha|} \neq \beta_{|\beta|} \Rightarrow i_{\alpha} \neq i_{\beta}$. Thus, the last term of $h(a_{|\alpha|})$ is $a_{i_{\alpha}}$ which is not equal to the last term of $h(b_{|\beta|}) = b_{i_{\beta}}$. 
\end{proof}

\begin{remark}
	If $\alpha^{-1}\beta$ is reduced, then so is $h(\alpha)^{-1}h(\beta)$ because the inverse operation preserves reduced words.
\end{remark}

\begin{corollary} \label{corollprefix}
	For $\alpha, \beta \in \mathcal L^{\ast}$, If $h(\alpha)$ is a prefix or suffix of $h(\beta)$, then $\alpha$ is a prefix or suffix of $\beta$ respectively.
\end{corollary}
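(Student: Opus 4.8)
The plan is to prove the contrapositive, leaning on the preceding lemma and the remark following it rather than unwinding the definition of $h$ by hand. Throughout, note that each $h(a_n) = [b_1,\ldots,b_n,a_n]$ is a nonempty word, so $|h(\gamma)|$ is a strictly increasing function of $|\gamma|$ and $h$ sends nonempty words to nonempty words. Two harmless degenerate cases can be dealt with first: if $\alpha$ is the empty word it is trivially a prefix (and a suffix) of $\beta$, and if $|\alpha|>|\beta|$ then $|h(\alpha)|>|h(\beta)|$, so $h(\alpha)$ is neither a prefix nor a suffix of $h(\beta)$ and there is nothing to prove. Hence I may assume $\alpha$ is nonempty and $|\alpha|\le|\beta|$.

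For the prefix statement, suppose $\alpha$ is not a prefix of $\beta$; I want $h(\alpha)$ not a prefix of $h(\beta)$. Let $\delta$ be the longest common prefix of $\alpha$ and $\beta$. Since $|\alpha|\le|\beta|$ and $\alpha$ is not a prefix of $\beta$, $\delta$ is a proper prefix of both, so writing $\alpha=\delta\alpha'$ and $\beta=\delta\beta'$ we have $\alpha',\beta'$ nonempty, and by maximality of $\delta$ their first letters differ. Then $\alpha'^{-1}\beta'$ is a reduced word in $\mathbb F[\mathcal A]$, so by the remark following the preceding lemma $h(\alpha')^{-1}h(\beta')$ is reduced in $\mathbb F[\mathcal A_F]$. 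Because $h(\alpha')$ and $h(\beta')$ are nonempty positive words, a reduced word of the form $h(\alpha')^{-1}h(\beta')$ forces that neither $h(\alpha')$ nor $h(\beta')$ is a prefix of the other (otherwise the whole of $h(\alpha')^{-1}$ would cancel, contradicting reducedness). Since $h(\alpha)=h(\delta)h(\alpha')$ and $h(\beta)=h(\delta)h(\beta')$ share the prefix $h(\delta)$, it follows that $h(\alpha)$ is not a prefix of $h(\beta)$.

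The suffix statement is completely symmetric: take $\delta$ to be the longest common suffix of $\alpha$ and $\beta$, write $\alpha=\alpha'\delta$ and $\beta=\beta'\delta$, observe that the last letters of $\alpha'$ and $\beta'$ differ so that $\alpha'\beta'^{-1}$ is reduced, apply the preceding lemma itself (rather than the remark) to get $h(\alpha')h(\beta')^{-1}$ reduced, conclude that neither $h(\alpha')$ nor $h(\beta')$ is a suffix of the other, and peel off the common suffix $h(\delta)$. I do not expect a genuine obstacle here; the only places needing care are the bookkeeping of the degenerate cases and the elementary observation that a reduced free-group word $u^{-1}v$ (resp. $uv^{-1}$) with $u,v$ nonempty positive words rules out either of $u,v$ being a prefix (resp. suffix) of the other.
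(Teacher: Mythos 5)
Your main argument is the contrapositive of the paper's: the paper assumes $h(\alpha)$ is a prefix of $h(\beta)$, observes $h(\alpha)^{-1}h(\beta)=h(\alpha^{-1}\beta)$ reduces to a word with no inverses, cancels the longest common prefix of $\alpha,\beta$ to obtain a reduced $\alpha'^{-1}\beta'$, and invokes the remark to force $\alpha'=\omega$; you cancel the same common prefix $\delta$ and invoke the same remark to conclude that $h(\alpha')$ is not a prefix of $h(\beta')$. Modulo the direction of the implication this is the same computation, and your reading of the remark (a reduced word $u^{-1}v$ with $u$ nonempty rules out $u$ being a prefix of $v$) is the right way to use it.

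One step, however, fails as written: the claim that $|h(\gamma)|$ is an increasing function of $|\gamma|$, and hence that $|\alpha|>|\beta|$ forces $|h(\alpha)|>|h(\beta)|$. Since $|h(a_n)|=n+1$ depends on $n$, this is false: for $\alpha=a_1a_1$ and $\beta=a_5$ one has $|\alpha|=2>1=|\beta|$ but $|h(\alpha)|=4<6=|h(\beta)|$. So your reduction to the case $|\alpha|\le|\beta|$ is unjustified. The repair is easy and needs no lengths: if $\alpha$ is not a prefix of $\beta$, the longest common prefix $\delta$ satisfies $\delta\neq\alpha$, so $\alpha=\delta\alpha'$ with $\alpha'$ nonempty; either $\delta=\beta$, in which case $h(\beta)$ is a \emph{proper} prefix of $h(\alpha)$ and $h(\alpha)$ cannot be a prefix of $h(\beta)$, or $\delta$ is proper in both and your main argument applies verbatim (similarly for suffixes). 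A further caveat you inherit from the paper rather than introduce: every $h(a_n)$ begins with $b_1$, so $h(\alpha')^{-1}h(\beta')$ is never literally reduced when $\alpha',\beta'$ are both nonempty --- the pair $b_1^{-1}b_1$ cancels at the junction. What is true, and what both you and the paper actually need, is that no $h(a_n)$ is a prefix of $h(a_m)$ for $n\neq m$ (after the common block $b_1\cdots b_{\min(n,m)}$ one word continues with an $a$-letter and the other with a $b$-letter), so the reduced form of $h(\alpha')^{-1}h(\beta')$ still contains an inverse letter and $h(\alpha')$ is not a prefix of $h(\beta')$. Citing the remark as a black box leaves your proof exactly as valid as the paper's; replacing that appeal by this prefix-code observation makes it airtight.
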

\begin{proof}
	We only prove the prefix version as the suffix version is similar. Let $h(\alpha)$ be a prefix of $h(\beta)$ and consider $h(\alpha)^{-1}h(\beta)$. Note if $h(\alpha)$ is a prefix of $h(\beta)$, this means the reduced word of $h(\alpha)^{-1}h(\beta)$ contains no inverses. We can calculate that $h(\alpha)^{-1}h(\beta) = h(\alpha^{-1}\beta)$. We can reduce $\alpha^{-1}\beta$ to $\alpha'^{-1}\beta'$. Because $\alpha'^{-1}\beta'$ is reduced, we must have that $h(\alpha')^{-1}h(\beta')$ is reduced. Because this is equal to $h(\alpha)^{-1}h(\beta)$ which contains no inverses, we must have that $\alpha' = \omega$, so $\alpha$ must be a prefix of $\beta$.
\end{proof}
\begin{theorem}
	$S_1 \hookrightarrow S_2$ with $(\alpha, A, \beta) \mapsto (h(\alpha), A \cap X_0, h(\beta))$ is a well-defined injective homomorphism of inverse semigroups.
\end{theorem}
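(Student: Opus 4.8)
The plan is to verify three things in turn: that the assignment $(\alpha, A, \beta) \mapsto (h(\alpha), A \cap X_0, h(\beta))$ lands in $S_2$ (i.e.\ the output is a legitimate element of the inverse semigroup of $(\mathcal E_F, \mathcal L_F, \mathcal B_F)$), that it respects multiplication, and that it is injective. For well-definedness I would recall the description in \cite[Section~5.2]{zhang2025partialactionsgeneralizedboolean}: an element $(\alpha, A, \beta)$ requires $\alpha, \beta \in \mathcal L^{\ast}$ with $r(\alpha) = r(\beta)$ (as sets in $\mathcal B$), $A \subseteq r(\beta)$, and $A$ nonempty (or the whole thing is $0$). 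By Lemma~\ref{lemma:hcalc}(1), $r(B, h(\alpha)) = r(B_i, \alpha)\cap X_0$ applied with $B = \mathcal E_F^0$ gives $r_{F}(h(\alpha)) = r(\alpha)\cap X_0$, and likewise $r_F(h(\beta)) = r(\beta)\cap X_0$; since $r(\alpha) = r(\beta)$ these agree, and $A\cap X_0 \subseteq r(\beta)\cap X_0 = r_F(h(\beta))$, so $(h(\alpha), A\cap X_0, h(\beta))$ is a valid element (and $A\cap X_0 = A$ under the identification $A \in \mathcal B = \mathcal B(X_0)$, so nonemptiness is preserved — note $\mathcal E^0 \in \mathcal B$ guarantees $X_0 = \mathcal E^0$ carries all of $\mathcal B$).

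For multiplicativity, recall the product rule in $S_{(\mathcal E,\mathcal L,\mathcal B)}$: $(\alpha, A, \beta)(\gamma, B, \delta)$ is nonzero exactly when $\beta$ and $\gamma$ are comparable as words (one is a prefix of the other), in which case the product is again of the form $(\alpha', C, \delta')$ with $\alpha', \delta'$ built by concatenating the surviving tails and $C = $ an intersection of $A$, $B$ and a relative range. The key point is that $h$ is a length-scaling embedding of the free monoid on $\mathcal A$ into the free monoid on $\mathcal A_F$ that is \emph{prefix-reflecting and prefix-preserving}: Corollary~\ref{corollprefix} gives that $h(\beta)$ is a prefix of $h(\gamma)$ iff $\beta$ is a prefix of $\gamma$ (and symmetrically for suffixes), so the two products are nonzero simultaneously. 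When they are nonzero, I would split on which of $\beta,\gamma$ is longer — say $\gamma = \beta\gamma'$ — and check that $h(\gamma) = h(\beta)h(\gamma')$, so that the ``surviving tail'' on the $S_2$ side is exactly $h(\gamma')$; then the word components of the two products match after applying $h$. For the $\mathcal B_F$-component, use Lemma~\ref{lemma:hcalc}(1) to translate each relative range $r(\,\cdot\,, h(\gamma'))$ appearing in the $S_2$ product back to $r(\,\cdot\,, \gamma')\cap X_0$, and then the intersection computation reduces to the one in $S_1$ intersected with $X_0$, which is precisely $h$ applied to the $S_1$ product. One should also treat the $0$ cases: if the $S_1$ product is $0$ because the intersection of the $\mathcal B$-sets vanishes, the same intersection intersected with $X_0$ still vanishes.

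Injectivity is the easiest part: if $(h(\alpha), A\cap X_0, h(\beta)) = (h(\alpha'), A'\cap X_0, h(\beta'))$, then $h(\alpha) = h(\alpha')$ and $h(\beta) = h(\beta')$ as words in $\mathbb F[\mathcal A_F]$; since $h$ is manifestly injective on $\mathbb F[\mathcal A]$ (the letter $a_n$ is recoverable as the last letter of $h(a_n)$, and $h$ concatenates these blocks), we get $\alpha = \alpha'$, $\beta = \beta'$, and then $A\cap X_0 = A = A' = A'\cap X_0$ forces $A = A'$. I expect the main obstacle to be bookkeeping in the multiplicativity step — specifically making sure the relative-range intersections that define the $\mathcal B_F$-component of a product in $S_2$ are correctly identified, via Lemma~\ref{lemma:hcalc}, with the $X_0$-trace of the corresponding computation in $S_1$, and handling the degenerate cases (empty words, products that are $0$) uniformly. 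All of this is routine once the prefix-reflection property (Corollary~\ref{corollprefix}) and the range formula (Lemma~\ref{lemma:hcalc}) are in hand, so I would state those reductions explicitly and then dispatch the computation in the generic case, remarking that the $0$ cases follow a fortiori.
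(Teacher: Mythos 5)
Your proposal is correct and follows essentially the same route as the paper: well-definedness via Lemma~\ref{lemma:hcalc} (the paper uses part (2), $(\mathcal B_F)_{h(\alpha)} = \mathcal B_{\alpha}(X_0)$, rather than part (1), but these are interchangeable here), multiplicativity by case analysis on which of $\beta,\gamma$ is a prefix of the other with $h(\gamma)=h(\beta)h(\gamma')$, and Corollary~\ref{corollprefix} to see that the zero products correspond. One small correction: elements $(\alpha,A,\beta)$ only require $A\in\mathcal B_{\alpha}\cap\mathcal B_{\beta}$, not $r(\alpha)=r(\beta)$, but your argument only actually uses the former, so nothing breaks.
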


\begin{proof}
	To prove that mapping is valid, we need to show that if $A \in \mathcal B_{\alpha} \cap \mathcal B_{\beta}$, then $A \cap X_0 \in (\mathcal B_F)_{h(\alpha)} \cap (\mathcal B_F)_{h(\beta)}$. However, $(\mathcal B_F)_{h(\alpha)} = \mathcal B_{\alpha}(X_0)$ and the same holds for $\beta$. Hence, if $A \in \mathcal B_{\alpha}$ then obviously $A \cap X_0 \in \mathcal B_{\alpha}(X_0)$. The same proof holds for $\beta$, so we are done.
	
	We now prove that $S_1 \rightarrow S_2$ is a homomorphism. Let $a = (\alpha, A, \beta)$ and $b = (\gamma, B, \delta)$ be elements of $S_1$. $h(\alpha, A, \beta) = (h(\alpha), s_0(A), h(\beta))$ and $h(\gamma, B, \delta) = (h(\gamma), s_0(B), h(\delta))$. We calculate each case separately. 
	
	If $\beta = \gamma$, note that $h(\beta) = h(\gamma)$ so the resulting value \[h(a)h(b) = (h(\alpha), (A \cap X_0) \cap (A \cap X_0),h(\delta)) = (h(\alpha), (A \cap B \cap X_0), h(\delta))\] which is obviously the same as $h(ab) = h(\alpha, A \cap B, \delta) = (h(\alpha), (A \cap B) \cap X_0, h(\delta))$. 
	
	If $\gamma = \beta \gamma'$, $h(\gamma) = h(\beta) h(\gamma')$ and then \[h(ab) = (h(\alpha\gamma'), (r(A, \gamma') \cap B) \cap X_0, h(\delta)) = (h(\alpha)h(\gamma'), (r(A \cap X_0, h(\gamma')) \cap (B \cap X_0), h(\delta)))\] We have that \[h(a)h(b) = (h(\alpha), A \cap X_0, h(\beta))(h(\gamma), B \cap X_0, h(\delta))\] One can see that $h(\gamma) = h(\beta)h(\gamma')$, so we apply the same multiplication rule to get 
	
	\[h(a)h(b) = (h(\alpha)h(\gamma'), (r(A \cap X_0, h(\gamma')) \cap B) \cap X_0, h(\delta))\] which is the same. The other case is the exact same, so we omit it.
	
	In the final case where none of the above conditions hold, we can use Corollary~\ref{corollprefix} to show that the resulting product must also be zero, so we are done.
\end{proof}

Hence, we can view $S_1 \subseteq S_2$ as an inverse subsemigroup. The below lemma characterizes which elements of $S_2$ are elements of $S_1$. 

\begin{lemma} \label{lemma:ins1}
	Let $ (\alpha, A, \beta) \in S_2$. Then $(\alpha, A, \beta) \in S_1$ if and only if  $A \in \mathcal B(X_0)$ and, for $\alpha$ and $\beta$, they individually are either empty or their first characters are $b_1$.
\end{lemma}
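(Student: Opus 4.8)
The plan is to characterize membership in $S_1$ via the image of the embedding $h$. Recall that elements of $S_1$ map to $S_2$ by $(\alpha, A, \beta) \mapsto (h(\alpha), A \cap X_0, h(\beta))$, and that $h$ sends the generator $a_n \in \mathcal A$ to the word $[b_1, b_2, \ldots, b_n, a_n] \in \mathcal A_F^{\ast}$. So the image of $h$ on a nonempty word $\mu = a_{n_1} a_{n_2} \cdots a_{n_k} \in \mathcal L^{\ast}$ is $h(\mu) = b_1 \cdots b_{n_1} a_{n_1} b_1 \cdots b_{n_2} a_{n_2} \cdots b_1 \cdots b_{n_k} a_{n_k}$. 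In particular, every nonempty word in the image of $h$ begins with the letter $b_1$, its set-coordinate lies in $\mathcal B(X_0)$ by construction, and these are necessary conditions. The content of the lemma is that together with $A \in \mathcal B(X_0)$ and $\alpha, \beta$ each either empty or beginning with $b_1$, these conditions are also sufficient.

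\textbf{Forward direction.} First I would note that if $(\alpha, A, \beta) \in S_1$, meaning it equals $(h(\alpha'), A' \cap X_0, h(\beta'))$ for some $(\alpha', A', \beta') \in S_1$ (using the original notation), then $A = A' \cap X_0 \in \mathcal B(X_0)$ by definition of $\mathcal B(X_0)$. For the word coordinates, if $\alpha' = \omega$ is empty then $h(\alpha') = \omega$; otherwise $\alpha'$ begins with some $a_{n_1}$, and $h(\alpha')$ begins with the prefix $b_1 \cdots b_{n_1} a_{n_1}$, whose first letter is $b_1$. The same holds for $\beta'$. This gives the necessity of the stated conditions.

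\textbf{Reverse direction.} This is the main step. Suppose $(\alpha, A, \beta) \in S_2$ with $A \in \mathcal B(X_0)$ and with $\alpha, \beta$ each empty or beginning with $b_1$. I would argue that any such $\alpha \in \mathcal L_F^{\ast}$ appearing in a nonzero element of $S_2$ with range set living in $X_0$ must in fact be of the form $h(\alpha')$ for a (necessarily unique) $\alpha' \in \mathcal L^{\ast}$. The idea: in the graph $\mathcal E_F$, the only edges out of a copy $v_i$ with $i > 0$ are the single $b_{i+1}$-edge to $v_{i+1}$ (when $i < n_v$) and the $a_i$-edge to $w_0$ (when $r(v, a_i) \neq \emptyset$); out of $v_0$ there is only the $b_1$-edge to $v_1$ and no labelled edges from the original graph (since original $a_i$-edges emanate from $v_i$, not $v_0$). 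Hence any path in $\mathcal E_F$ that starts at a vertex in the copy $X_0$ (which is forced since $\alpha$ begins with $b_1$, whose source-set is $X_0 = \mathcal E^0$) is constrained: it must read $b_1 b_2 \cdots b_j$ for some $j$, then either stop or read a single letter $a_j \in \mathcal A$ landing back in a copy of $X_0$, and then repeat. Reading off the $a_j$'s in order recovers a word $\alpha' \in \mathcal L^{\ast}$ with $h(\alpha') = \alpha$; the requirement that $\alpha$ ends in a way compatible with $A \in \mathcal B(X_0)$ (i.e.\ that $r(A,\alpha) \subseteq X_0$, which is part of $(\alpha,A,\beta)$ being a valid element with the given set-coordinate) forces $\alpha$ to end with a full block $b_1\cdots b_j a_j$ rather than a truncated $b_1 \cdots b_j$ with $j>0$; the empty case is allowed. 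I would make this precise using Lemma~\ref{lemma:hcalc}, in particular that $(\mathcal B_F)_{h(\alpha')} = \mathcal B_{\alpha'}(X_0)$ while $(\mathcal B_F)_{h(\alpha')b_1\cdots b_i} = \mathcal B_{\alpha'}(X_i)$, so a nonempty truncation lands in $\mathcal B(X_i)$ with $i>0$, disjoint from $\mathcal B(X_0)$. Doing the same for $\beta$ and checking $A \in \mathcal B_{\alpha'} \cap \mathcal B_{\beta'}$ (which follows from $A \in \mathcal B(X_0)$ together with the admissibility constraints on $\alpha, \beta$ in $S_2$) then exhibits $(\alpha, A, \beta)$ as the image of $(\alpha', A, \beta') \in S_1$.

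\textbf{Main obstacle.} The delicate point is the ``no truncation'' argument: ruling out that $\alpha$ is a proper prefix $h(\alpha')b_1 \cdots b_i$ with $0 < i \le n$ of some $h(\alpha' a_n)$. This is exactly where the hypothesis $A \in \mathcal B(X_0)$ does the work, via the disjointness of the copies $\mathcal B(X_i)$ inside $\mathcal B_F$ and the computation of $(\mathcal B_F)_{h(\alpha')b_1\cdots b_i} = \mathcal B_{\alpha'}(X_i)$ from Lemma~\ref{lemma:hcalc}; I would need to handle carefully the edge case where $A$ meets $X_0$ only because it is empty there, but since we may assume $A \ne \emptyset$ (the zero element is trivially in $S_1$) and elements of $S_2$ are tracked up to their support, this is not an issue. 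The structural claim about paths in $\mathcal E_F$ starting in $X_0$ is essentially bookkeeping given the explicit description of the edges of $\mathcal E_F$, so I do not expect difficulty there beyond writing it out.
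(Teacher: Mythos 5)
Your proposal is correct and follows essentially the same route as the paper: necessity is read off directly from the form of the embedding $(\alpha',A',\beta')\mapsto(h(\alpha'),A'\cap X_0,h(\beta'))$, and sufficiency uses $A\in\mathcal B(X_0)$ (with $A\neq\emptyset$) together with the disjointness of the copies $\mathcal B(X_i)$ to force $\alpha,\beta$ to end in some $a_i$ and hence, by induction on the block structure, to lie in the image of $h$, after which Lemma~\ref{lemma:hcalc} gives $A_0\in\mathcal B_{\alpha'}\cap\mathcal B_{\beta'}$. You merely spell out the inductive ``no truncation'' step that the paper leaves as an easy exercise.
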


\begin{proof}
	For $(\alpha', A', \beta') \in S_1$, we have that it's image in $S_2$ is $(h(\alpha'), A' \cap X_0, h(\beta'))$. Clearly then, $A' \in \mathcal B(X_0)$. Furthermore, if $\alpha' = \omega$, then $h(\alpha') = \omega$ and if $\alpha' \neq \omega$, then $h(\alpha')_1 = b_1$. The same proof works for $\beta'$, so we conclude that if $(\alpha, A, \beta) \in S_1$, then the conditions hold.

	In the other direction, assume that $(\alpha, A, \beta) \in S_2$ satisfies our conditions. If $\alpha \neq \omega$, then because $A \in \mathcal B(X_0)$, we must have that $\alpha_{|\alpha|} = a_{i_{\alpha}}$ for some $i_{\alpha}$. A similar fact holds for $\beta$ if $\beta \neq \omega$. With an easy inductive proof, we find that $\alpha = h(\alpha')$ for some $\alpha' \in \mathcal L^{\ast}$ and similarly $\beta = h(\beta')$. Using Lemma~\ref{lemma:hcalc}, we find that $A \in (\mathcal B_F)_{h(\alpha')} \cap (\mathcal B_F)_{h(\beta')}$ implies that $A_0 \in \mathcal B_{\alpha'} \cap \mathcal B_{\beta'}$ and hence $(\alpha', A_0, \beta') \in S_1$ is a valid element and it corresponds to $(\alpha, A, \beta) \in S_2$.

\end{proof}
The following corollary is then obvious.

\begin{corollary} \label{corollary:ine1}
	Let $(\alpha, A, \alpha) \in E_2$. Then $(\alpha, A, \alpha) \in E_1$ if and only if $A \in \mathcal B(X_0)$ and either $\alpha = \omega$ or $\alpha_1 = b_1$.
\end{corollary}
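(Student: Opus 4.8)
The plan is to deduce this directly from Lemma~\ref{lemma:ins1} by specializing to the idempotent case. First I would recall that the inverse subsemigroup inclusion $S_1 \subseteq S_2$ identifies $E_1$ with $S_1 \cap E_2$: an element of an inverse semigroup is idempotent precisely when it equals its own square (equivalently its own inverse), and this condition is preserved and reflected by any inverse subsemigroup inclusion. In particular, since a generic element of $E_2$ has the form $(\alpha, A, \alpha)$ and is already idempotent in $S_2$, we have $(\alpha, A, \alpha) \in E_1$ if and only if $(\alpha, A, \alpha) \in S_1$.

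Next I would apply Lemma~\ref{lemma:ins1} to the element $(\alpha, A, \beta)$ with $\beta = \alpha$. The lemma gives that $(\alpha, A, \alpha) \in S_1$ if and only if $A \in \mathcal B(X_0)$ and each of the two words $\alpha$ and $\beta = \alpha$ is individually either empty or has first character $b_1$. Since the two words coincide, the pair of conditions collapses to the single requirement that $\alpha = \omega$ or $\alpha_1 = b_1$, which is exactly the statement of the corollary. Combining this with the identification of the previous paragraph completes the argument.

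There is essentially no obstacle here: all of the combinatorial content --- in particular the characterization of which $(\alpha, A, \beta) \in S_2$ arise from $S_1$ via the word map $h$ --- has already been extracted in Lemma~\ref{lemma:ins1}, and the corollary is only a matter of unwinding the definition of $E_1$ and setting $\beta = \alpha$. The one point worth stating explicitly, to keep the proof self-contained, is that the inclusion $S_1 \hookrightarrow S_2$, being a homomorphism of inverse semigroups, carries $E_1$ onto $S_1 \cap E_2$, so that for an element already known to lie in $E_2$, membership in $E_1$ is equivalent to membership in $S_1$.
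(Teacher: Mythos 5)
Your proposal is correct and matches the paper, which simply declares the corollary ``obvious'' from Lemma~\ref{lemma:ins1}: your argument is exactly the intended unwinding, namely that $E_1 = S_1 \cap E_2$ and that specializing the lemma to $\beta = \alpha$ collapses the two word conditions into one.
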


The below lemma gives a sufficient condition for $s \in S_2$ to satisfy $s^+ \cap S_1 \neq \emptyset$. This casework will be used in several proofs.

\begin{lemma} \label{abovecase}
	Let $0 \neq (\alpha, A, \beta) \in S_2$. If one of the following cases holds, then $(\alpha, A, \beta)^+ \cap S_1 \neq \emptyset$.
	
	\begin{enumerate}
		\item $\alpha = \omega$, $\beta = \omega$, and $A \in \mathcal B(X_0)$.
		\item $\alpha = \omega$, $\beta_1 = b_1$, and $A \in \mathcal B(X_0)$ 
		\item $\alpha_1 = \omega$, $\beta_1 = \omega$ and $A \in \mathcal B(X_0)$
		\item $\alpha_1 = b_1$ and $\beta_1 = b_1$.
	\end{enumerate}
\end{lemma}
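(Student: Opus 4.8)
The plan is to dispose of cases 1--3 by a direct appeal to Lemma~\ref{lemma:ins1} (with Corollary~\ref{corollary:ine1} for the idempotent instances) and to concentrate the real work on case~4. In each of cases 1--3 the hypotheses amount to: $A\in\mathcal B(X_0)$, and each of $\alpha,\beta$ is either the empty word $\omega$ or has first letter $b_1$. These are exactly the conditions in Lemma~\ref{lemma:ins1}, so $(\alpha,A,\beta)\in S_1$ already; since $(\alpha,A,\beta)\le(\alpha,A,\beta)$ in the natural partial order of $S_2$, we get $(\alpha,A,\beta)\in(\alpha,A,\beta)^+\cap S_1$. So these cases require nothing beyond recognising them as instances of Lemma~\ref{lemma:ins1}.

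For case~4 I would first establish the normal form of $b_1$-initial labelled paths in $\mathcal E_F$. Reading a path from a level-$0$ vertex the only available $b$-edge is $b_1$, and from level $i$ the only $b$-edge is $b_{i+1}$; the only way to leave the ladder is to read some $a_n$ while at level $n$, which returns to level $0$ and completes a block $b_1b_2\cdots b_na_n=h(a_n)$. Hence every labelled path of $\mathcal E_F$ beginning with $b_1$ has the form $h(\rho)\,b_1b_2\cdots b_k$ for a unique $\rho\in\mathcal L^{\ast}$ and $k\ge 0$ (with $k\ge 1$ when $\rho=\omega$). Writing $\alpha=h(\rho)\,b_1\cdots b_k$ and $\beta=h(\sigma)\,b_1\cdots b_l$, Lemma~\ref{lemma:hcalc}(3) gives $(\mathcal B_F)_\alpha=\mathcal B_\rho(X_k)\subseteq\mathcal B(X_k)$ and $(\mathcal B_F)_\beta=\mathcal B_\sigma(X_l)\subseteq\mathcal B(X_l)$; since $\mathcal E_F^0=\bigsqcup_iX_i$ is a disjoint union, the nonzero set $A$ lying in both forces $k=l$, and then $A=E\cap X_k$ for some $E\in\mathcal B_\rho\cap\mathcal B_\sigma$ (take $E$ to be the intersection of the witnessing sets coming from $\mathcal B_\rho$ and from $\mathcal B_\sigma$).

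Now put $B\coloneqq E\cap X_0\in\mathcal B(X_0)$. Lemma~\ref{lemma:hcalc}(2) gives $B\in(\mathcal B_F)_{h(\rho)}\cap(\mathcal B_F)_{h(\sigma)}$, so $(h(\rho),B,h(\sigma))$ is a legitimate nonzero element of $S_2$, and since $B\in\mathcal B(X_0)$ and each of $h(\rho),h(\sigma)$ is empty or begins with $b_1$, Lemma~\ref{lemma:ins1} places it in $S_1$. It then remains to check $(\alpha,A,\beta)\le(h(\rho),B,h(\sigma))$ in the natural partial order of $S_2$: reading the ladder segment $b_1\cdots b_k$ from the level-$0$ set $B$ restricts it to the vertices that can climb $k$ steps, so $r(B,b_1\cdots b_k)=E\cap X_k=A$, and together with the prefix relations $\alpha=h(\rho)\cdot(b_1\cdots b_k)$ and $\beta=h(\sigma)\cdot(b_1\cdots b_k)$, in which both words carry the common suffix $b_1\cdots b_k$, this is precisely the defining condition of the inequality. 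Hence $(h(\rho),B,h(\sigma))\in(\alpha,A,\beta)^+\cap S_1$.

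The main obstacle is the middle paragraph: passing from a valid nonzero $(\alpha,A,\beta)$ whose outer words begin with $b_1$ to the normal form $h(\rho)b_1\cdots b_k$, and then extracting $E$. This uses the structure of the added ladders in $\mathcal E_F$ together with Lemma~\ref{lemma:hcalc} and the disjointness of the $X_i$ to see that $A$ lives at a single level $k$ and equals $E\cap X_k$ with $E\in\mathcal B_\rho\cap\mathcal B_\sigma$. Once that bookkeeping is done, the choice $B=E\cap X_0$ and the verification of the partial-order inequality are routine.
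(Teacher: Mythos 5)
Your proof is correct and rests on the same two pillars as the paper's: Lemma~\ref{lemma:ins1} disposes of cases (1)--(3) outright, and in case (4) one strips the trailing ladder segment $b_1\cdots b_k$ and replaces $A=E\cap X_k$ by $E\cap X_0$ to obtain an element of $S_1$ dominating the given one. The only difference is presentational: the paper peels off one trailing $b_n$ at a time via an iterative chain $s_0\le s_1\le\cdots\le s_N$, whereas you first establish the normal form $h(\rho)\,b_1\cdots b_k$ of a $b_1$-initial labelled path and pass to the level-$0$ element $(h(\rho),E\cap X_0,h(\sigma))$ in a single step, which makes the underlying structural fact about the ladders slightly more explicit but is otherwise the same argument.
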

\begin{proof}
	In the first three cases, by Lemma~\ref{lemma:ins1}, we have that $(\alpha, A, \beta) \in S_1$, so the only interesting case is (4).
	
	We will successively build $s_i$ such $s_0 = (\alpha, A, \beta)$ and each $s_i \leq s_{i+1}$. We will show that the process terminates in some finite number $N$ steps and that $s_N \in S_1$.
	
	To run our process, we first note that if the last character of $\alpha$ is some $b_i$, the last character of $\beta$ must also be $b_i$ because the range of $b_i$ is disjoint with the range of any other character in $\mathcal A_F$. Similarly, if the last character of $\alpha$ is some $a_i$, the last character of $\alpha$ must be some $a_j$ (not necessarily the same character $a_i$, but it can't be some $b_j$). 
	
	Furthermore, note that if $A \in \mathcal B_{\alpha' b_n} \cap \mathcal B_{\beta' b_n}$ then we have that $A \in \mathcal B(X_n)$. By definition there exists $A' \in \mathcal B_{\alpha'} \cap \mathcal B_{\beta'} \subseteq \mathcal B_F$ such that $A \subseteq r(A', b_n)$. Using the representation of $A \in \mathcal B_F$, we can enforce that $A \in \mathcal B(X_n)$. Abusing of notation slightly, we denote $A(X_{n-1})$ to be any choice of $A' \in \mathcal B(X_{n-1})$ where $A\subseteq r(A', b_n)$.
	
	For a word $\alpha = \alpha_1 \ldots \alpha_n \neq \omega$, we define the word $\alpha[:-1] = \alpha_1 \ldots \alpha_{n-1}$ as removing the last characters.
	
	With the preliminaries done, we now describe the process:
	
	\begin{enumerate}
		\item If the last character of $\alpha_i$ is some $b_n$, then let $s_{i+1} = (\alpha_i[:-1], A_i(X_{n-1}), \beta_i[:-1])$
		\item Otherwise, stop
	\end{enumerate}

	Step (1) of the process is well-defined because we've shown that if the last character of $\alpha_i$ is some $b_n$, then so is the last character of $\beta_i$. The process also clearly ends some finite $N$ steps because at every step the finite string $\alpha$ loses a character. Taking the idempotent $e_i = (\alpha_i, A_i, \alpha_i) \in E_2$, we calculate $e_is_{i+1} = (\alpha_i, A \cap r(A_i(X_{n_1}), b_n), \beta_i) = (\alpha_i, A, b_i)$ because $A \subseteq r(A_i(X_{n-1}), b_n)$ by definition. Hence, $s_i \leq s_{i+1}$.
	
	It remains to show that $s_N \in S_1$. Note that $\alpha_N$ and $\beta_N$ are prefixes of $\alpha$ and $\beta$ so they (individually) are either empty or they begin with $b_1$ by assumption. Using Lemma~\ref{lemma:ins1}, it suffices to show that $A \in \mathcal B(X_0)$. By our process, we either have removed no characters or the last removed character must be $b_1$ as after every $b_n$ $(n \geq 2)$ must be a $b_{n-1}$. If we removed no characters, then the last characters of both $\alpha$ and $\beta$ must be some $a_{\alpha}$ and $a_{\beta}$ respectively as we assumed that both $\alpha$ and $\beta$ are not the empty word. Clearly then $A_N \in \mathcal B(X_0)$. If the last removed character was $b_1$, then $A_N = A_{N-1}(X_0)$ which by definition is in $\mathcal B(X_0)$, so we are done.
\end{proof}

\begin{corollary}\label{abovecasecorol}
	Let $0 \neq (\alpha, A, \alpha) \in E_2$. Then $(\alpha, A, \alpha)^+ \cap E_1 \neq \emptyset$ if and only if one of the following holds 
	
	\begin{enumerate}
		\item $\alpha = \omega$ and $A \in \mathcal B(X_0)$
		\item $\alpha_1 = b_1$
	\end{enumerate}
\end{corollary}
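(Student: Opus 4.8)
The plan is to read the corollary off from Lemma~\ref{abovecase} and Corollary~\ref{corollary:ine1}, the only subtlety being the mismatch between the ``$\cap S_1$'' appearing in Lemma~\ref{abovecase} and the ``$\cap E_1$'' appearing here. I would first dispose of that gap by the standard inverse-semigroup observation that if $e$ is an idempotent and $e\le s$ then $e\le ss^{\ast}$; since $S_1$ is an inverse subsemigroup of $S_2$ we have $ss^{\ast}\in E(S_1)=E_1$, so for the idempotent $e=(\alpha,A,\alpha)$ the conditions $e^{+}\cap S_1\neq\emptyset$ and $e^{+}\cap E_1\neq\emptyset$ are equivalent. After this reduction I only ever need to talk about $S_1$.

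For the ``if'' direction, suppose first that $\alpha=\omega$ and $A\in\mathcal B(X_0)$. Then Corollary~\ref{corollary:ine1} already gives $(\omega,A,\omega)\in E_1$, so $(\alpha,A,\alpha)\in(\alpha,A,\alpha)^{+}\cap E_1$ and there is nothing more to prove. If instead $\alpha_1=b_1$, then taking $\beta:=\alpha$ puts us exactly in case~(4) of Lemma~\ref{abovecase} (we have $\alpha_1=\beta_1=b_1$ and $(\alpha,A,\alpha)\neq 0$), so $(\alpha,A,\alpha)^{+}\cap S_1\neq\emptyset$, and the equivalence above upgrades this to $(\alpha,A,\alpha)^{+}\cap E_1\neq\emptyset$.

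For the ``only if'' direction, suppose $(\gamma,B,\gamma)\in E_1$ with $(\gamma,B,\gamma)\ge(\alpha,A,\alpha)\neq 0$. By Corollary~\ref{corollary:ine1}, $B\in\mathcal B(X_0)$ and $\gamma$ is either $\omega$ or begins with $b_1$. Using the description of the natural order on the labelled-space inverse semigroup $S_2$ from \cite[Section~5.2]{zhang2025partialactionsgeneralizedboolean}, the relation forces $\gamma$ to be a prefix of $\alpha$, say $\alpha=\gamma\mu$, with $\emptyset\neq A\subseteq r(B,\mu)$. If $\gamma$ begins with $b_1$ then $\alpha_1=\gamma_1=b_1$, which is case~(2). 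If $\gamma=\omega$ then $\alpha=\mu$ and $A\subseteq r(B,\alpha)$: when $\alpha=\omega$ this reads $A\subseteq B\in\mathcal B(X_0)$, and since the order on $\mathcal B_F$ is computed componentwise, $A\in\mathcal B(X_0)$, which is case~(1); when $\alpha\neq\omega$, nonemptiness of $A$ forces $r(B,\alpha_1)\neq\emptyset$, and the range computations above show that for $B\in\mathcal B(X_0)$ the only label $c$ with $r(B,c)\neq\emptyset$ is $c=b_1$ (the $a_n$-labelled edges all leave the copies $X_n$ with $n\ge 1$, and the only $b$-edge leaving the $X_0$-copy is labelled $b_1$), so $\alpha_1=b_1$ and we are again in case~(2).

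I expect the only real friction to be bookkeeping: being careful with the exact form of the natural order on $S_2$ (a prefix condition together with a containment of a relative range) and with the elementary range identities for $B\in\mathcal B(X_0)$. There is no new idea beyond Lemma~\ref{abovecase} and Corollary~\ref{corollary:ine1}; the $S_1$-versus-$E_1$ discrepancy is absorbed once and for all by the $ss^{\ast}$ trick, and the rest is a direct application of those two results.
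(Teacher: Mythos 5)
Your proposal is correct and follows essentially the same route as the paper: sufficiency via Lemma~\ref{abovecase} and necessity via Corollary~\ref{corollary:ine1} together with the prefix condition and the fact that $r(B,c)=\emptyset$ for $B\in\mathcal B(X_0)$ unless $c=b_1$. Your explicit $e\le s\Rightarrow e\le ss^{\ast}$ reduction of the $S_1$-versus-$E_1$ mismatch is a small point the paper leaves implicit, but it does not change the argument.
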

\begin{proof}
	The sufficient direction is Lemma~\ref{abovecase}.
	
	To prove that it is necessary, assume that there exists a $(\beta, B, \beta) \in E_1 \subseteq E_2$ such that $(\alpha, A, \alpha) \leq (\beta, B, \beta)$. Then by Lemma~\ref{corollary:ine1}, we have that $B \in \mathcal B(X_0)$ and either $\beta = \omega$ or $\beta_1 = b_1$. We will show that one of the cases must hold. Note that $\beta$ must be a prefix of $\alpha$. 
	
	If $\alpha = \omega$, then $\beta = \omega$ and we must have that $A \subseteq B$ so $A \in \mathcal B(X_1)$. 
	
	If $\alpha \neq \omega$, then we have the following cases. If $\beta \neq \omega$ then $\beta$ begins with $b_1$ so we must have that $\alpha$ begins with $b_1$ as well. If $\beta = \omega$, this means that $A \subseteq r(B, \alpha) \neq \emptyset$. However, because $B \in \mathcal B(X_0)$, we have that $r(B, a) = \emptyset$ for any $a \in \mathcal A_F \setminus \{b_1\}$, so again we must have that $\alpha$ begins with $b_1$.
\end{proof}

We now check the remaining four conditions of Theorem~\ref{finalmorita1}. For ease of checking, we split them into four theorems.

\begin{theorem}
	$E_1 \subseteq E_2$ preserves finite covers. Namely, $S_1 \subseteq_c S_2$.
\end{theorem}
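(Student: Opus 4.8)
The plan is to verify the definition of ``preserves finite covers'' directly: given $x\in E_1$ and a finite cover $\{x_1,\dots,x_n\}$ of $x$ in $E_1$, one must show that for every nonzero $y\in E_2$ with $y\le x$ there is an index $i$ with $y\wedge x_i\neq 0$. The starting point is Corollary~\ref{abovecasecorol}: since $x\in y^+\cap E_1$, this intersection is nonempty, so either $y\in E_1$ already --- in which case the cover property of $\{x_i\}$ in $E_1$ finishes at once, as meets and the zero element agree in $E_1$ and $E_2$ --- or, by Corollary~\ref{corollary:ine1}, the word of $y$ begins with $b_1$ while the range of $y$ is not in $\mathcal B(X_0)$. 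In the latter case, using the normal form of paths in $\mathcal E_F$ that begin with $b_1$ together with Lemma~\ref{lemma:ins1}, one writes $y=(h(\alpha')b_1\cdots b_k,\,A,\,h(\alpha')b_1\cdots b_k)$ with $k\ge 1$ and $A$ a nonempty subset of $X_k$; comparing words with $x=(h(\beta'),B,h(\beta'))$ via Corollary~\ref{corollprefix} gives $\alpha'=\beta'\lambda$ and $A\subseteq r(B,\lambda)\cap X_k$.

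The heart of the argument is a dichotomy on the vertex set $A\subseteq X_k$. If $A$ contains a vertex that is not a true sink, then by definition of $X_k$ it emits an original edge labelled by some $a_j$ with $j\ge k$, so $r(A,\,b_{k+1}\cdots b_j a_j)\neq\emptyset$; since $h(\alpha')b_1\cdots b_k\cdot b_{k+1}\cdots b_j a_j=h(\alpha' a_j)$ completes the current block, the idempotent $z=(h(\alpha' a_j),\,r(A,\,b_{k+1}\cdots b_j a_j),\,h(\alpha' a_j))$ is a nonzero element of $E_1$ (a routine check of the accommodating-set conditions) with $z\le y\le x$, and the cover property applied to $z\le x$ gives $z\wedge x_i\neq 0$ for some $i$, whence $y\wedge x_i\neq 0$ because $z\le y$. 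If instead every vertex of $A$ is a true sink, there is in general \emph{no} element of $E_1$ below $y$, and one argues differently: pick $v\in A$ and $B_v\in\mathcal B$ with $v\in B_v$ and $|\Delta_{B_v}|=0$; since $B_v\subseteq\mathcal E_{\text{tsink}}\subseteq X_k$, writing $A=C\cap X_k$ with $C\in\mathcal B_{\alpha'}$ one has $A\cap B_v=C\cap B_v\in\mathcal B_{\alpha'}$, so both $y'=(h(\alpha')b_1\cdots b_k,\,A\cap B_v,\,h(\alpha')b_1\cdots b_k)\le y$ in $E_2$ and $z=(h(\alpha'),\,A\cap B_v,\,h(\alpha'))\le x$ in $E_1$ are legitimate nonzero idempotents. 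The key claim is that $y'\wedge w\neq 0\iff z\wedge w\neq 0$ for every $w\in E_1$: this is a case check on the relative position of the words of $w$ and $z$, using that $A\cap B_v$ consists of true sinks, so pushing along the $b$-tail leaves the vertex set unchanged ($r(A\cap B_v,\,b_1\cdots b_k)=A\cap B_v$) and no path can continue past $A\cap B_v$ along an original edge. Granting the claim, $z\wedge x_i\neq 0$ for some $i$ by the cover property, hence $y'\wedge x_i\neq 0$, hence $y\wedge x_i\neq 0$ since $y'\le y$.

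Collecting the cases shows that every finite cover of $x$ in $E_1$ remains a finite cover in $E_2$, i.e.\ $E_1\subseteq E_2$ preserves finite covers, which is by definition the assertion $S_1\subseteq_c S_2$. The step I expect to be the main obstacle is the ``all true sinks'' branch of the dichotomy: there the naive idea of descending from $y$ to an element of $E_1$ is unavailable, and one must instead show that the $b$-tail is invisible to $E_1$ (the non-orthogonality claim) and carry the bookkeeping that the truncation $A\cap B_v$ genuinely returns to $\mathcal B$ --- this is where the hypothesis $\mathcal E^0\in\mathcal B$ (built into the membership $A=C\cap X_k$ with $C\in\mathcal B_{\alpha'}\subseteq\mathcal B$) and the fact that true-sink sets belong to every $X_i$ get used. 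Everything else is routine manipulation with the product and order on the labelled-space inverse semigroup recalled from \cite{zhang2025partialactionsgeneralizedboolean}, together with Lemma~\ref{lemma:hcalc}.
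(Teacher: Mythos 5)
Your proposal is correct and follows essentially the same route as the paper: both reduce via Corollary~\ref{abovecasecorol} to the case where the word of $y$ ends in some $b_n$, split on whether the vertex set contains a non-true-sink (descending to an element of $E_1$ below $y$ when it does), and in the all-true-sinks case argue that the $b$-tail is invisible to meets with elements of $E_1$. The only difference is presentational --- you verify the definition of preserving finite covers directly, whereas the paper invokes the criterion of \cite[Lemma~3.18 and Remark~3.19]{zhang2025partialactionsgeneralizedboolean} --- but the case analysis and key ideas coincide.
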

\begin{proof}	
	We use \cite[Lemma~3.18 and Remark~3.19]{zhang2025partialactionsgeneralizedboolean}. Let $(\alpha, A, \alpha) \in E_2$ with $\alpha \in \mathcal L_F^{\ast}$ and $A \in (\mathcal B_F)_{\alpha}$. Let $A = \bigsqcup_{i=0}^{\infty} (A_i \cap X_i)$.
	
	If $(\alpha, A, \alpha)^+ \cap E_1 = \emptyset$ then we are done. By Corollary~\ref{abovecasecorol}, the remaining cases are $\alpha = \omega$ and $A \in \mathcal B(X_1)$ or $\alpha$ begins with $b_1$. 
	
	If $\alpha = \omega$ and $A \in \mathcal B(X_0)$, then $(\alpha, A, \alpha) \in E_1$. 
	
	If $\alpha$ begins with $b_1$ and ends with some $a_i$, then $A \in \mathcal B(X_0)$ so again $(\alpha, A, \alpha) \in E_1$.
	
	Now assume that $\alpha$ ends with $b_n$ for some $n$. Note that $A \in \mathcal B(X_n)$. There are two cases. 
	
	If there is some vertex $v \in A$ with an outgoing edge labelled $a_m$ for $m \geq n$,  we can then calculate that taking $\beta = [b_{n+1}, \ldots, b_m, a_m]$, we have that $r(A, \beta) \neq \emptyset$ and hence $(\alpha\beta, r(A, \beta), \alpha\beta) \in E_1$ is valid because $r(A, \beta) \neq \emptyset$. It's clear that $(\alpha\beta, r(A, \beta), \alpha \beta) \leq (\alpha, A, \alpha)$.
	
	Otherwise, $A \in \mathcal B(X_n)$ contains only true sinks. Let $\alpha'$ be such that $\alpha = \alpha' b_1 \ldots b_n$. Take any $v \in A$ and $B \in \mathcal B$ that contains $v$ such that $|\Delta_B| = 0$ (this exists because $v$ is a true sink). We can then calculate $(\alpha', r(\alpha') \cap B \cap X_0, \alpha')^- = \{(\alpha'b_1\ldots b_m, r(\alpha') \cap B \cap X_m, \alpha'b_1\ldots b_m) \colon m \in [0, \infty)\}$ as $B$ contains only sinks. All have non-zero multiplication with $(\alpha, A, \alpha)$ as $v \in A \cap B$, so we are done.
\end{proof}
\begin{theorem}
$E_1 \subseteq E_2$ is tight.
\end{theorem}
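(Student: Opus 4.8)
The plan is to unwind the definition of \emph{tight}. Since the previous theorem already establishes $S_1\subseteq_c S_2$, i.e.\ that $E_1\subseteq E_2$ preserves finite covers, the natural inclusion $\mathcal T_c(E_1)\subseteq\mathcal T_c(E_2)$ is well defined, and it remains only to prove that this inclusion is an ideal of generalized Boolean algebras. As $\mathcal T_c(E_2)$ is generated by $\{V_y\}_{y\in E_2}$ and the image of $\mathcal T_c(E_1)$ is the sub-generalized-Boolean-algebra generated by $\{V_x\}_{x\in E_1}$, a routine distributivity argument — using $V_x\cap V_y=V_{xy}$ for the meet $xy$ taken in $E_2$, together with the observation that a meet or a relative complement of an element lying below some $V_x$ still lies below that $V_x$ — reduces the ideal property to the following claim: \emph{for every $x\in E_1$ and every $z\in E_2$ with $z\le x$, the compact-open set $V_z$ lies in the image of $\mathcal T_c(E_1)$.}

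To prove the claim, I would first pin down the possible shapes of such a $z=(\alpha,A,\alpha)$. Since $z\le x\in E_1$ we have $z^+\cap E_1\neq\emptyset$, so Corollary~\ref{abovecasecorol} forces either $\alpha=\omega$ with $A\in\mathcal B(X_0)$, or $\alpha$ to begin with $b_1$; refining this using the description of the embedding, Lemma~\ref{lemma:hcalc}, Lemma~\ref{lemma:ins1}, and the fact that a vertex of $X_k$ in $\mathcal E_F$ emits at most the two labels $a_k$ and $b_{k+1}$, one sees that $\alpha$ must have the form $h(\gamma')\,b_1b_2\cdots b_j$ for some $\gamma'\in\mathcal L^{\ast}$ and $j\ge 0$, and then $A=A_0\cap X_j$ with $A_0\in\mathcal B_{\gamma'}$. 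If $j=0$, i.e.\ $\alpha=h(\gamma')$, then Corollary~\ref{corollary:ine1} gives $z\in E_1$ and there is nothing to do. If $j\ge 1$, I would instead realize $V_z$ as a relative complement of elements of the image: with $w_0:=(h(\gamma'),A_0\cap X_0,h(\gamma'))$ — an element of $E_1$ by Corollary~\ref{corollary:ine1}, using $\mathcal E^0\in\mathcal B$ — and $w_k:=(h(\gamma'a_k),r(A_0,a_k)\cap X_0,h(\gamma'a_k))\in E_1$ for those $1\le k\le j-1$ with $r(A_0,a_k)\neq\emptyset$, one checks an identity of the shape
\[
V_z \;=\; V_{w_0}\setminus\bigcup_{k=1}^{j-1}V_{w_k},
\]
which exhibits $V_z$ in the image of $\mathcal T_c(E_1)$ and completes the reduction, hence the theorem.

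The content is entirely in verifying that identity, and I expect this to be the main obstacle. The argument amounts to describing the tight filters that extend the word $h(\gamma')$: because every vertex of $X_k$ emits at most $a_k$ and $b_{k+1}$, such a filter either keeps following the pure-$b$ spine $h(\gamma')\,b_1b_2\cdots$ — in which case, since $r(A_0,b_1\cdots b_j)=A_0\cap X_j=A$, it automatically contains $z$ once it has passed $\alpha$ — or it branches off along an $a_k$-edge at some level $k$, and those branch-offs with $k\le j-1$ are exactly the filters in $\bigcup_k V_{w_k}$. Making this precise requires the regularity statement $(\mathcal B_F)_{\mathrm{reg}}=\mathcal B_F$ proved above (so that the relevant ranges are nonempty, hence define sets in $\mathcal B_F$, and so that each $B\in\mathcal B_F$ emits only finitely many labels), together with the assumption $\mathcal E^0\in\mathcal B$ (which makes $X_i\in\mathcal B_F$, so that $A_0\cap X_0$ and $r(A_0,a_k)\cap X_0$ really do determine elements of $E_1$). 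Some additional care is needed with vertices that are neither true sinks nor emit any label, but these are controlled by the regularity of $\mathcal B_F$.
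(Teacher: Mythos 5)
Your proposal is correct and follows essentially the same route as the paper: after reducing tightness to showing that $V_z\in\mathcal T_c(E_1)$ for $z\le x\in E_1$, your witnesses $w_0$ and $w_k$ are exactly the paper's $y$ and $y_i$ (note $h(\gamma'a_k)=h(\gamma')[b_1,\ldots,b_k,a_k]$), and your identity $V_z=V_{w_0}\setminus\bigcup_{k}V_{w_k}$ is precisely the paper's claim that $\{y_1,\ldots,y_{n-1},x\}$ is a cover of $y$ with $y_i\wedge x=0$, verified by the same spine-versus-branch dichotomy. The only cosmetic difference is that you re-derive the reduction by hand where the paper invokes a cover criterion from the companion paper.
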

\begin{proof}
	We use \cite[Lemma~3.23]{zhang2025partialactionsgeneralizedboolean}. Let $(\alpha, A, \alpha) \in E(S_2)$. Again by Corollary~\ref{abovecasecorol}, we may assume that $\alpha = \omega$ and $A \in \mathcal B(X_1)$ or $\alpha$ begins with $b_1$. 
	
	Similar to the last proof, if $\alpha = \omega$ and $A \in \mathcal B(X_1)$ or $\alpha_1 = b_1$ and ends with some $a_k$, then $(\alpha, A, \alpha \in )E_1$ and we are done because any element is a cover of itself.
	
	Hence, we may assume that $\alpha_1 = b_1$ and ends with some $b_n$. Let $A' \in \mathcal B$ such that $A = A' \cap X_n$ and let $\alpha'$ be such that $\alpha = \alpha'b_1\ldots b_n$. Note that $\alpha'$ is either empty or the last character of $\alpha'$ is some $a_k$. Let $y = (\alpha', (A' \cap r(\alpha')) \cap X_0,  \alpha')$ where interpret $r(\omega) = \mathcal E^0$. By the previous statements on $\alpha'$ and using Corollary~\ref{corollary:ine1}, we have that $y \in E(S_1)$. 
	
	For $i = 1, \ldots, n-1$, define $\beta_i = [b_1, \ldots, b_i, a_i]$ and consider the set of elements $y_i = (\alpha'\beta_i, r(A' \cap r(\alpha'), \beta_i) \cap X_0, \alpha'\beta_i)$. It is not difficult to see that $y_i \in E(S_1)$ and furthermore we have that $y_i, x \leq y$ and $y_i \wedge x = 0$.
	
	It remains to show that $\{y_1, \ldots, y_{n-1}, x\}$ is a cover for $y$. Let $y' = (\alpha'\gamma, B, \alpha' \gamma) \leq y$ where $\emptyset \neq B \subseteq r(A' \cap r(\alpha'), \gamma)$. We will do casework on $\gamma$. 
	
	If $\gamma$ has a prefix of $\beta_i$, then we are done because $(\alpha'\beta_i, r(A, \beta_i), \alpha'\beta_i)$ is essentially maximal for all elements with prefix $\alpha'\beta_i$. Hence, we can assume that $\gamma$ does not have $\beta_i$ as a prefix. 
	
	The remaining case is when $\gamma$ has a prefix $[1, 2, \ldots, m-1]$ for some $m \geq 1$. We will keep reducing to smaller idempotents until we reach the case $m \geq n$. The case $m \geq n$ is obvious by multiplication. 
	
	If $m < n$, by similar techniques used to prove that all sets are regular, we either have that $r(B, b_m) \neq \emptyset$ or $r(B, a_m) \neq \emptyset$. Hence, we can just either make $(\alpha' a_m, r(B, a_m), \alpha' a_m)$ with prefix $\beta_m$ or $(\alpha'b_m, r(B, b_m), \alpha'b_m)$ with a new $m \coloneqq m+1$.
\end{proof}
\begin{theorem}
	For all $x, y \in E_1$ and $s \in S_2$ there exists $s' \in S_1$ such that $xsy \leq s'$.
\end{theorem}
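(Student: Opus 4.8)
The plan is to compute the product $xsy$ explicitly inside $S_2$ and then recognise it as one of the four configurations handled by Lemma~\ref{abovecase}, which produces an element of $S_1$ lying above it. First, if $xsy=0$ we simply take $s'=0\in S_1$, so we may assume $xsy\neq 0$. Writing $x=(\alpha,A,\alpha)$ and $y=(\beta,B,\beta)$, Corollary~\ref{corollary:ine1} tells us $A,B\in\mathcal B(X_0)$ and that each of $\alpha,\beta$ is either $\omega$ or begins with $b_1$; write $s=(\gamma,C,\delta)$. The single structural fact driving everything is that the only edges of $\mathcal E_F$ leaving a vertex of $X_0$ are the $b_1$-labelled edges $v_0\to v_1$, since the edges carrying an original label $a_i$ emanate from $v_i$ with $i\geq 1$. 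Consequently, for any $Z\in\mathcal B(X_0)$ and any $w\in\mathcal L_F^{\ast}$, nonemptiness of $r(Z,w)$ forces $w=\omega$ or $w$ to begin with $b_1$; and any nonempty element of $\mathcal B_F$ contained in such a $Z$ again lies in $\mathcal B(X_0)$.

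I would carry out the computation in two steps. First, compute $sy=(\gamma,C,\delta)(\beta,B,\beta)$ by the multiplication rule of the inverse semigroup of the labelled space, splitting on whether $\delta$ is a prefix of $\beta$, $\beta$ is a prefix of $\delta$, or $\delta=\beta$. In each branch the third coordinate of $sy$ is either $\beta$ itself, or is $\delta$ with nontriviality forcing $r(B,\delta')\neq\emptyset$ for the prefix-stripped piece $\delta'$; since $B\in\mathcal B(X_0)$, the structural fact makes this third coordinate $\omega$ or $b_1$-initial, and when it equals $\omega$ its set is contained in $B$, hence lies in $\mathcal B(X_0)$. Second, left-multiply by $x=(\alpha,A,\alpha)$, now splitting on the prefix relation between $\alpha$ and the first coordinate of $sy$. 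Write $xsy=(\mu,E,\nu)$. In every branch $\alpha$ is a prefix of $\mu$, so if $\alpha\neq\omega$ then $\mu$ begins with $b_1$; if $\alpha=\omega$ then $E\subseteq A$, and nontriviality of $xsy$ together with the structural fact forces $\mu$ to be $\omega$ or $b_1$-initial, with $E\in\mathcal B(X_0)$ whenever $\mu=\omega$. The third coordinate $\nu$ keeps the property established for $sy$, refined by the same analysis on the right (when it collapses to $\omega$ one has $\beta=\omega$ and $E\subseteq B$). Hence $xsy=(\mu,E,\nu)$ has $\mu,\nu$ each equal to $\omega$ or beginning with $b_1$, and $E\in\mathcal B(X_0)$ whenever either of $\mu,\nu$ is $\omega$ — exactly the hypotheses of the four cases in Lemma~\ref{abovecase}, which yields $s'\in S_1$ with $xsy\leq s'$.

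The conceptual content is entirely in the structural fact above; the rest is an unwinding of the multiplication formula. The point demanding care is the bookkeeping of the middle sets: one must check that in each branch where a coordinate of an intermediate product collapses to the empty word, the accompanying set is forced into $\mathcal B(X_0)$. This is precisely where the constraint $A,B\in\mathcal B(X_0)$ on elements of $E_1$ is used, and it is why it is not enough to track the leading letters $b_1$ of $\alpha,\beta$ — we also need their range sets to sit over $X_0$. I expect this middle-set bookkeeping, spread over the (finitely many) multiplication branches, to be the main obstacle; no step is difficult, but the case analysis must be done systematically.
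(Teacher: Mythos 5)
Your proposal is correct and follows essentially the same route as the paper: both reduce $xsy$ (after disposing of the case $xsy=0$) to one of the four configurations of Lemma~\ref{abovecase} by explicit case analysis on the multiplication, with the key input being that a set in $\mathcal B(X_0)$ admits no outgoing label other than $b_1$, so that the words of $xsy$ are forced to be empty or $b_1$-initial and the middle set lands in $\mathcal B(X_0)$ whenever a word collapses to $\omega$. One small imprecision: when $\alpha=\omega$ the middle set of $xsy$ is of the form $r(A,\mu)\cap(\,\cdot\,)$ and is not contained in $A$ unless $\mu=\omega$, but the statement you actually invoke --- $E\in\mathcal B(X_0)$ when $\mu=\omega$ --- is the correct one, so the argument stands.
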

\begin{proof}
	Let $x = (\alpha_x, A_x, \alpha_x)$ and $y = (\alpha_y, A_y, \alpha_y)$  be in $E_1$. By Corollary~\ref{corollary:ine1}, we have that $A_x, A_y \in \mathcal B(X_0)$ and $\alpha_x, \alpha_y$ are either empty or begin with $b_1$. Let $s = (\alpha, A, \beta) \in S_2$. We will show that $xsy=0$ or $xsy$ satisfies at least one of the conditions of Lemma~\ref{abovecase}.
	
	Assume $0 \neq xsy = (\alpha', A', \beta')$. If $\alpha_x \neq \omega$, then $\alpha'$ must begin with $\alpha_x$ and hence $\alpha'$ begins with $b_1$. If $\alpha'_x = \omega$ then we know that $xs = (\alpha, r(A_x, \alpha) \cap B, \beta)$. If $\alpha \neq $ is not the empty word, then because $A_x \in \mathcal B(X_0)$ and $r(A_x, \alpha) \neq \emptyset$ we find that $\alpha = \alpha'$ begins with $b_1$. A similar proof works to show the same thing for $\beta$ and $\alpha_y$. Hence, if one of $\alpha$ or $\alpha_x$ is not equal to $\omega$, and similarly for $\beta, \alpha_y$, then Lemma~\ref{abovecase} (4) applies.
	
	Hence, the remaining cases are when $\alpha = \alpha_x = \omega$ or $\beta = \alpha_y = \omega$. We will only prove the case $\alpha = \alpha_x = \omega$ as the other case is similar. 
	
	When $\beta = \alpha_y = \omega$, $xsy = (\omega, A_x \cap A \cap A_y, \omega)$ which is Lemma~\ref{abovecase} (1) because $A_x \in \mathcal B(X_0)$. Otherwise, either $\beta$ or $\alpha_y$ is not $\omega$. 
	
	As proven previously, when this is the case we have that $\beta'$ begins with $b_1$. We calculate that $xs = (\omega, A_x \cap B, \beta)$. As $A_x \in \mathcal B(X_0)$, we have that $\beta$ must either be empty or end with some $a_i$. Furthermore, $A'$ is either the intersection of $A_x \cap B$ with something, or the intersection of $A_y$ with something. Hence, in either case, $A' \in \mathcal B(X_0)$.

	There are two remaining cases. When $\alpha_y$ is a prefix of $\beta$, then $\alpha' = \omega$ and we can apply Lemma~\ref{abovecase} (3). If $\beta$ is a strict prefix of $\alpha_y$, then because $\beta'$ ends with $a_i$ for some $i$, we must have that $\alpha'$ begins with $b_1$ so we can apply Lemma~\ref{abovecase} (4).
\end{proof}
\begin{theorem}
	$L_R(S_1) \subseteq L_R(S_2)$ is contained in no proper two-sided ideal
\end{theorem}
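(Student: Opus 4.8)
The plan is to show that every two-sided ideal $J$ of $L_R(S_2)$ containing $L_R(S_1)$ equals $L_R(S_2)$, and for this it suffices (exactly as in the enlargement theorem) to prove that $(\gamma, Y, \gamma)\delta_e \in J$ for every idempotent $(\gamma, Y, \gamma) \in E_2$, since these elements generate a set of local units for $L_R(S_2)$. Two elementary mechanisms will be used repeatedly: (i) if $x \le y$ in $E_2$ and $y\delta_e \in J$, then $x\delta_e = (x\delta_e)(y\delta_e) \in J$; and (ii) for any $s \in S_2$ the element $(ss^*)\delta_e$ lies in the two-sided ideal of $L_R(S_2)$ generated by $(s^*s)\delta_e$, the elements $(ss^*)\delta_{\varphi(s)}$ and $(s^*s)\delta_{\varphi(s)^{-1}}$ furnishing the left and right multipliers (the same kind of computation already carried out in the proof of Lemma~\ref{covershifts}).

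The base case is the hypothesis $\mathcal E^0 \in \mathcal B$, which gives $(\omega, \mathcal E^0, \omega) \in E_1$, hence $(\omega, \mathcal E^0, \omega)\delta_e \in L_R(S_1) \subseteq J$. More generally, by Corollary~\ref{abovecasecorol} any $(\gamma, Y, \gamma) \in E_2$ with $(\gamma, Y, \gamma)^+ \cap E_1 \neq \emptyset$ — in particular any one whose word $\gamma$ begins with $b_1$, and any $(\omega, Y, \omega)$ with $Y \in \mathcal B(X_0)$ — is dominated by some $e \in E_1$, so $(\gamma, Y, \gamma)\delta_e \in J$ by mechanism (i). It therefore remains to treat the idempotents whose word begins with $b_j$ for some $j \ge 2$ or with a letter $a_m \in \mathcal A$, together with the idempotents $(\omega, Y, \omega)$ for which $Y \notin \mathcal B(X_0)$.

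For a ``bad'' first letter I would prepend the $b$-spine. If $\gamma = b_j\gamma'$ with $j \ge 2$, put $s := (\gamma,\, Y,\, b_1 b_2 \cdots b_{j-1}\gamma) = (\gamma, Y, b_1 \cdots b_j\gamma')$; a direct check from the construction of $\mathcal E_F$ (the $b$-edges form a spine, and $b_j$ and $b_1\cdots b_j$ have the same range, namely the copies at level $j$, so $(\mathcal B_F)_{b_j\gamma'} = (\mathcal B_F)_{b_1\cdots b_j\gamma'}$) shows $s \in S_2$ with $ss^* = (\gamma, Y, \gamma)$ and $s^*s = (b_1\cdots b_j\gamma', Y, b_1\cdots b_j\gamma')$. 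The word of $s^*s$ begins with $b_1$, so $(s^*s)\delta_e \in J$ by the previous paragraph, and hence $(\gamma,Y,\gamma)\delta_e = (ss^*)\delta_e \in J$ by mechanism (ii). The case $\gamma = a_m\gamma'$ is identical, taking $s := (\gamma, Y, b_1\cdots b_m\gamma) = (\gamma, Y, h(a_m)\gamma')$ and using that $a_m$ and $h(a_m) = b_1\cdots b_m a_m$ have the same range.

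Finally, for $\gamma = \omega$ I write $Y = \bigsqcup_{i \in F}(Y_i \cap X_i)$ with $F$ finite and $Y_i \in \mathcal B$; the idempotents $(\omega, Y_i \cap X_i, \omega)$ form a finite cover of $(\omega, Y, \omega)$ by pairwise incompatible elements, so $(\omega, Y, \omega)\delta_e = \sum_{i \in F}(\omega, Y_i \cap X_i, \omega)\delta_e$ in $L_R(S_2)$. The $i = 0$ summand lies in $L_R(S_1) \subseteq J$ because $Y_0 \cap X_0 \in \mathcal B(X_0)$, and for $i \ge 1$ the spine argument applies with $s := (\omega, Y_i \cap X_i, b_1\cdots b_i)$, whose $s^*s = (b_1\cdots b_i, Y_i \cap X_i, b_1\cdots b_i)$ again starts with $b_1$. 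This exhausts all generators, so $J = L_R(S_2)$. No individual step is deep; the only real content is the bookkeeping with the spine letters $b_i$ and the verification that the auxiliary elements $s$ genuinely lie in $S_2$ — which works precisely because of the range identities built into $\mathcal E_F$ — so I expect the case analysis itself to be the main (and essentially the only) obstacle.
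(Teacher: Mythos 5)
Your argument is correct, and it reaches the conclusion by a genuinely different route than the paper. The paper passes to the labelled Leavitt path algebra picture: it invokes the identification of the subalgebra as $p_A \mapsto p_{A \cap X_0}$, the standard fact that $L_R(\mathcal E_F, \mathcal L_F, \mathcal B_F)$ is the smallest two-sided ideal containing the projections $\{p_B\}_{B \in \mathcal B_F}$, and the single conjugation identity $(s_{b_1}\cdots s_{b_n})^{\ast}\, p_{A\cap X_0}\,(s_{b_1}\cdots s_{b_n}) = p_{A\cap X_n}$ together with the observation that every $B \in \mathcal B_F$ is a finite disjoint union of such sets. You instead stay entirely inside $L_R(S_2)$ and verify by hand that every $(\gamma, Y, \gamma)\delta_e$ with $(\gamma,Y,\gamma) \in E_2$ lies in the ideal, via a case split on the first letter of $\gamma$ and the partial isometries $(\gamma, Y, b_1\cdots b_j\gamma')$, $(\gamma, Y, h(a_m)\gamma')$, and $(\omega, Y_i\cap X_i, b_1\cdots b_i)$; the key verifications that these are genuine elements of $S_2$ rest on the range identities $r(b_j) = r(b_1\cdots b_j) = X_j$ and $r(a_m) = r(b_1\cdots b_m a_m)$, both of which do hold by construction of $\mathcal E_F$, and your ``mechanism (ii)'' is exactly the $b_ib_i^{\ast}$/$b_i^{\ast}b_i$ computation from Lemma~\ref{covershifts}. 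What the paper's route buys is brevity, at the cost of leaning on the external identification with the labelled Leavitt path algebra and a ``well known'' generation fact; what your route buys is self-containedness and uniformity with the rest of Section~\ref{AppLabelled} (everything is phrased through $E_1$, $E_2$, and Corollary~\ref{abovecasecorol}), at the cost of the case bookkeeping you anticipated. The only point worth making explicit in a write-up is the identity $(\omega, Y, \omega)\delta_e = \sum_{i\in F}(\omega, Y_i\cap X_i, \omega)\delta_e$, which requires noting that $\{(\omega, Y_i\cap X_i, \omega)\}_{i\in F}$ is a pairwise-disjoint cover of $(\omega, Y, \omega)$ in $E_2$, so that $V_{(\omega,Y,\omega)}$ is the disjoint union of the corresponding compact opens in the tight spectrum; this is the same implicit step the paper takes when it says all elements of $\mathcal B_F$ are finite sums.
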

\begin{proof}
	Reading the isomorphisms from our inverse semigroups to the labelled Leavitt path algebras with respect to the map $S_1 \rightarrow S_2$ in \cite[Section~5.2]{zhang2025partialactionsgeneralizedboolean}, we find that our subalgebra identifies $p_A \mapsto p_{A \cap X_0}$ for all $A \in \mathcal B$. It's well known that the labelled Leavitt path algebra $L_R(\mathcal E_F, \mathcal L_F, \mathcal B_F)$ is the smallest two-sided ideal containing the projections $p_A$ for $A \in \mathcal B_F$. It's not hard to show that $(s_{b_1}\ldots s_{b_n})^{\ast} p_{A \cap X_0} (s_{b_1}\ldots b_n) = p_{A \cap X_n}$. All elements in $\mathcal B_F$ are finite sums of this, hence all projections are in any two-sided ideal containing the subalgebra the image, and we are done.
\end{proof}

\bibliographystyle{abbrv}
\bibliography{MoritaEquiv}

\end{document}